\newcommand{\intprodl}{%
    \mathbin{\scalebox{1.5}{$\lrcorner$}}%
}
\newcommand{\id}{\mathrm{id}}
\newcommand{\diag}{\mathrm{diag}}
\newcommand{\tr}{\mathrm{tr}}
\newcommand{\cB}{\mathcal{B}}
\newcommand{\cC}{\mathcal{C}}
\newcommand{\cD}{\mathcal{D}}
\newcommand{\cL}{\mathcal{L}}
\newcommand{\cM}{\mathcal{M}}
\newcommand{\cO}{\mathcal{O}}
\newcommand{\cQ}{\mathcal{Q}}
\newcommand{\cT}{\mathcal{T}}
\newcommand{\cU}{\mathcal{U}}
\newcommand{\cV}{\mathcal{V}}
\newcommand{\frg}{{\mathfrak g}}
\newcommand{\frh}{{\mathfrak h}}
\newcommand{\ZZ}{\mathbb{Z}}
\newcommand{\QQ}{\mathbb{Q}}
\newcommand{\RR}{\mathbb{R}}
\newcommand{\CC}{\mathbb{C}}
\newcommand{\HH}{\mathbb{H}}
\newcommand{\OO}{\mathbb{O}}
\newcommand{\FF}{\mathbb{F}}
\DeclareMathOperator{\Hom}{\mathrm{Hom}}
\DeclareMathOperator{\End}{\mathrm{End}}
\DeclareMathOperator{\Aut}{\mathrm{Aut}}
\DeclareMathOperator{\Der}{\mathrm{Der}}
\DeclareMathOperator{\Mat}{\mathrm{Mat}}
\newcommand{\ad}{\mathrm{ad}}
\newcommand{\Ad}{\mathrm{Ad}}
\newcommand{\Gl}{\mathfrak{gl}}
\newcommand{\frsl}{{\mathfrak{sl}}}
\newcommand{\frsp}{{\mathfrak{sp}}}
\newcommand{\frso}{{\mathfrak{so}}}
\newcommand{\frgl}{{\mathfrak{gl}}}
\newcommand{\frsu}{{\mathfrak{su}}}
\newcommand{\fru}{{\mathfrak{u}}}
\newcommand{\GL}{\mathrm{GL}}
\newcommand{\SL}{\mathrm{SL}}
\newcommand{\SO}{\mathrm{SO}}
\newcommand{\Cl}{\mathfrak{Cl}}
\newcommand{\spin}{\mathrm{Spin}}
\newtheorem{theorem}{Theorem}
\newtheorem{proposition}[theorem]{Proposition}
\newtheorem{lemma}[theorem]{Lemma}
\newtheorem{corollary}[theorem]{Corollary}
\theoremstyle{definition}
\newtheorem{df}[theorem]{Definition}
\newtheorem{remark}[theorem]{Remark}
\numberwithin{theorem}{section}
\numberwithin{equation}{section}
\begin{document}

\title[Notes on $G_2$]{Notes on $G_2$: \\
the Lie algebra and the Lie group}

\author[C.~Draper]{Cristina Draper Fontanals${}^*$}
\address{Departamento de Matem\'{a}tica Aplicada,   Universidad de M\'{a}laga,
  29071 M\'{a}laga, Spain}
\email{cdf@uma.es}
\thanks{${}^*$ Supported by the Spanish Ministerio de Econom\'{\i}a y Competitividad---Fondo Europeo de
Desarrollo Regional (FEDER) MTM2016-76327-C3-1-P, and by the Junta de Andaluc\'{\i}a grants FQM-336 and FQM-7156, with FEDER funds}

\subjclass[2010]{ 
 17B25,  22E60}

\keywords{ Exceptional Lie algebra, exceptional Lie group, generic 3-forms, cross products, octonions, spinors}

\date{}

\begin{abstract}
These notes have been prepared for the Workshop on "(Non)-existence of
complex structures on $\mathbb{S}^6$",   celebrated in Marburg in March, 2017.
The material is not intended to be original. 
It contains a survey about the smallest of the exceptional Lie groups: $G_2$, its definition and different characterizations as well as its relationship to the spheres $\mathbb{S}^6$ and   $\mathbb{S}^7$. With the exception of the summary of the Killing-Cartan classification, this survey is self-contained, and all the proofs are provided.
Although these proofs are well-known, they are scattered, some of them are difficult to find, and   others require stronger background, while we will usually stick to linear algebra arguments. The approach is algebraical, working at the Lie algebra level most often.  We analyze  the complex Lie algebra  (and group) of type $G_2$ as well as the two real Lie algebras of type $G_2$, the split and the compact one. The octonion algebra will play its role, but it is not the starting point. Also, both the 3-forms approach and  the  spinorial approach are viewed and connected.
Special emphasis is put on relating all the viewpoints by providing precise models.
\end{abstract}

\maketitle

\tableofcontents 

\section{A summary of the Killing-Cartan classification via roots}\label{se:intro}

Killing essentially classified from 1888 to 1890 \cite{killing}  the complex finite-dimen\-sio\-nal simple Lie algebras, as a first step to classify  Lie groups. An important idea for this classification is that, if $\frg$ is a simple Lie algebra over $\CC$, then $\frg$ is isomorphic to the Lie algebra of linear transformations $\{\ad x:x\in\frg\}$, where $\ad x\colon\frg\to\frg$ denotes the adjoint map given by $\ad x(y)=[x,y]$. To study this family it is natural to try to diagonalize the operators $\ad x$. So, a \emph{Cartan subalgebra} is defined as a maximal subalgebra $\frh$ such that $\ad h$ is diagonalizable for all $h\in \frh$. This algebra is necessarily abelian, and, of course, it produces the eigenspace decomposition (as simultaneous diagonalization) $
\frg=\oplus_{\alpha\in\frh^*}\frg_\alpha,
$
where $\frg_\alpha:=\{x\in\frg:\ad h(x)=\alpha(h)x\  \forall h\in\frh\}$. The set $\Phi:=\{0\ne \alpha\in\frh^*: \frg_\alpha\ne0\}$ is called a \emph{root system} and its elements are linear functions called \emph{roots}. As $\frg_0$ coincides with $\frh$, the previous decomposition is then written as
$$
\frg=\frh\oplus\left(\sum_{\alpha\in\Phi}\frg_\alpha\right),
$$
called \emph{root decomposition of $\frg$ relative to $\frh$}.
Some important properties of this decomposition are that $[\frg_\alpha,\frg_\beta]\subset\frg_{\alpha+\beta}$ (clear), and that $\dim\frg_\alpha=1$ for every root $\alpha$ (not so clear!). This gives that the bracket of $\frg$ is quite controlled by $\Phi$. Before providing more details, recall that the simplicity of $\frg$ implies the nondegeneracy of the \emph{Killing form} defined by
$$
\kappa\colon\frg\times\frg\to\CC,\qquad \kappa(x,y)=\tr(\ad x\,\ad y),
$$
which in particular allows to identify $\frh$ with $\frh^*$ and then to define a bilinear symmetric form also in $\frh^*$:
$$(\,,\,)\colon \frh^*\times\frh^*\to\CC, \qquad(\alpha,\beta):=\kappa(t_\alpha,t_\beta),
$$
being $t_\alpha\in\frh$ determined by $\alpha=\kappa(t_\alpha,-)$. It turns out that $(\alpha,\beta)\in\QQ$ for all $\alpha,\beta\in\Phi$, and $(\alpha,\alpha)>0$, so that
$E:=\sum_{\alpha\in\Phi}\RR\alpha$ can be seen as an Euclidean vector space containing $\Phi$ (of real dimension equal to $\dim_\CC\frh$, also called the \emph{rank} of $\frg$) for the inner product extending $(\,,\,)$. Thus, we can work with angles and lengths in $\Phi\subset E$. In fact, $\Phi$ satisfies:
\begin{enumerate}
\item[(R1)] $\Phi$ is a finite subset that spans $E$, and $0\notin\Phi$;
\item[(R2)] If $\alpha\in\Phi$, then $\RR\alpha\cap\Phi=\{\pm\alpha\}$;
\item[(R3)] For any   $\alpha,\beta\in\Phi$, $\langle \beta,\alpha\rangle:=2\frac{(\beta,\alpha)}{(\alpha,\alpha)}\in\ZZ$;
\item[(R4)] If $\alpha\in\Phi$, the reflection $\sigma_\alpha$ on the hyperplane $(\RR\alpha)^{\perp}$ leaves $\Phi$ invariant  ($\sigma_\alpha(\beta)=\beta-\langle \beta,\alpha\rangle \alpha\in\Phi$ for all $\beta\in\Phi$).
\end{enumerate}
To be precise, any subset of an Euclidean space satisfying the properties (R1)-(R4) is what is named a root system. One of the key points in the Killing's paper is that it reduces the difficult algebraical problem of the classification of   complex simple Lie algebras to an easy geometrical problem: to classify root systems. The definition of root system is so restrictive that there exist only a few possibilities for them.
The group generated by the reflections $\{\sigma_\alpha:\alpha\in\Phi\}$ is called the \emph{Weyl group} of the root system, and it is always finite (due to being embedded in the symmetric group of $\Phi$). 

Take $\nu\in E$ such that $(\nu,\alpha)\ne0$ for any $\alpha\in\Phi$. This choice allows to define as \emph{positive} (resp. negative) a root $\alpha$ such that $(\nu,\alpha)>0$ (resp. $(\nu,\alpha)<0$) and so $\Phi=\Phi^+\cup\Phi^-$, with $\Phi^-=-\Phi^+$ (R$2$). A positive root is called \emph{simple} if it is not the sum of two positive roots. The set of simple roots, $\Delta=\{\alpha_1,\dots,\alpha_n\}$, is a basis of the Euclidean space $E$ and it satisfies that any $\alpha\in\Phi^+$ is a linear combination of the elements in $\Delta$ where all the coefficients are nonnegative integers. 

The square matrix $C=\left(\langle \alpha_i,\alpha_j\rangle\right)_{1\le i\le j\le n}$ is called \emph{Cartan matrix}.
The \emph{Dynkin diagram}\footnote{Historical remark: We do not mean that Killing exposed his results in this way. First, Cartan in his thesis \cite{Cartantesis} corrected the proofs and completed them, Weyl simplified the theory in \cite{Weyl} and Dynkin introduced the simple roots and the Dynkin diagrams in \cite{Dynkin}.} of $\Phi$ is the graph which consists of a node for each simple root, and where two different nodes associated to  $\alpha$ and $\beta$ in $\Delta$ are connected by $N_{\alpha,\beta}:= \langle \beta,\alpha\rangle\,\langle \alpha,\beta\rangle$ edges (it turns out that $N_{\alpha,\beta}\in\{0,1,2,3\}$, since it equals $4\cos^2\theta<4$ for $\theta$ the angle formed by $\alpha$ and $\beta$, but it is a nonnegative integer!). Moreover, if   $N_{\alpha,\beta}=2$ or $3$, then $\alpha$ and $\beta$ have different length\footnote{The quotient between the square of the lengths is $\frac{(\beta,\beta)}{(\alpha,\alpha)}=\frac{\langle \beta,\alpha\rangle}{\langle \alpha,\beta\rangle}$, again equal to $1$, $2$ or $3$, and it is not difficult to prove that either all the roots have the same length, or there are just two possible lengths. In such a case, the roots are called -correspondingly- long and short ones.}, and then an arrow is added pointing from the long to the short root.

A root system is said \emph{irreducible} if it is not a disjoint union of two orthogonal root systems, what happens if and only if its Dynkin diagram is connected, which is just the case if we start with a simple Lie algebra (this construction can be realized with a semisimple Lie algebra, that would correspond to nonnecessarily connected Dynkin diagrams). 
Some important examples of simple Lie algebras are the following: 
\begin{enumerate}
\item[\boxed{$A$}] The special Lie algebra $\frsl_{n+1}(\CC)$ of     zero trace matrices of size $ n+1$,  whose Dynkin diagram is called of type $A_n$ if $n\ge1$,
$$
\xymatrix{
\bullet\ar@{-}[]+0;[r]+0^<{\alpha_1}  & \bullet\ar@{-}[]+0;[r]+0^<{\alpha_2} & \bullet\ar@{-}[]+0;[r]^<{\alpha_3} & \;\cdots\;\ar@{-}[r]+0 & \bullet\ar@{-}[]+0;[r]+0^<{\alpha_{n-1}} & \bullet\ar@{}[]+0;[r]^<{\alpha_n} & 
}
$$
\item[\boxed{$B-D$}] The orthogonal Lie algebra $\frso_{m}(\CC)$ of skew-symmetric matrices, whose Dynkin diagram is of type $B_n$ if $m=2n+1$ and $n\ge2$,
\vspace{20pt}
$$\xymatrix{
\bullet\ar@{-}[]+0;[r]+0^<{\alpha_1}  & \bullet\ar@{-}[]+0;[r]+0^<{\alpha_2} & \bullet\ar@{-}[]+0;[r]^<{\alpha_3} & \;\cdots\;\ar@{-}[r]+0 & \bullet\ar@2{->}[]+0;[r]+0^<{\alpha_{n-1}} & \bullet\ar@{}[]+0;[r]^<{\alpha_n} & 
}
$$
and of type $D_n$ if $m=2n$ and $n\ge4$,
$$
\xymatrix{
& & & & & \bullet\ar@{}[r]^<{\alpha_{n-1}} & \\
\bullet\ar@{-}[]+0;[r]+0^<{\alpha_1}  & \bullet\ar@{-}[]+0;[r]+0^<{\alpha_2} & \bullet\ar@{-}[]+0;[r]^<{\alpha_3} & \;\cdots\;\ar@{-}[r]+0 & \bullet\ar@{-}[]+0;[ur]+0^<{\alpha_{n-2}}\ar@{-}[]+0;[dr]+0 & \\
& & & & & \bullet\ar@{}[r]^<{\alpha_n} & 
}$$
\item[\boxed{$C$}] The symplectic Lie algebra 
$$\frsp_{2n}(\CC)=\left\{x\in\frgl_{2n}(\CC):x^t
\left(\begin{array}{cc}0&I_n\\-I_n&0\end{array}\right)+\left(\begin{array}{cc}0&I_n\\-I_n&0\end{array}\right)x=0\right\},$$ 
whose Dynkin diagram is of type $C_n$ if   $n\ge3$,
$$ \xymatrix{
\bullet\ar@{-}[]+0;[r]+0^<{\alpha_1}  & \bullet\ar@{-}[]+0;[r]+0^<{\alpha_2} & \bullet\ar@{-}[]+0;[r]^<{\alpha_3} & \;\cdots\;\ar@{-}[r]+0 & \bullet\ar@2{<-}[]+0;[r]+0^<{\alpha_{n-1}} & \bullet\ar@{}[]+0;[r]^<{\alpha_n} &  
}$$

\end{enumerate}

The Dynkin diagrams of the irreducible root systems are precisely the above ones and
\vspace{10pt}

 \begin{tabular}{ll}

$(E_6)$ & $\xymatrix{
\bullet\ar@{-}[]+0;[r]+0^<{\alpha_1}  & \bullet\ar@{-}[]+0;[r]+0^<{\alpha_2} & \bullet\ar@{-}[]+0;[r]+0^<{\alpha_3}\ar@{-}[]+0;[d]+0^>{\alpha_6}  & \bullet\ar@{-}[]+0;[r]+0^<{\alpha_4} & \bullet\ar@{}[]+0;[r]^<{\alpha_5} & \\
& & \bullet & &
}$
\cr
$(E_7)$ & $\xymatrix{
\bullet\ar@{-}[]+0;[r]+0^<{\alpha_1}  & \bullet\ar@{-}[]+0;[r]+0^<{\alpha_2} & \bullet\ar@{-}[]+0;[r]+0^<{\alpha_3} & \bullet\ar@{-}[]+0;[r]+0^<{\alpha_4}\ar@{-}[]+0;[d]+0^>{\alpha_7}  & \bullet\ar@{-}[]+0;[r]+0^<{\alpha_5} & \bullet\ar@{}[]+0;[r]^<{\alpha_6} & \\
& & & \bullet & &
}$
\cr
$(E_8)$ & $\xymatrix{
\bullet\ar@{-}[]+0;[r]+0^<{\alpha_1}  & \bullet\ar@{-}[]+0;[r]+0^<{\alpha_2} & \bullet\ar@{-}[]+0;[r]+0^<{\alpha_3} & \bullet\ar@{-}[]+0;[r]+0^<{\alpha_4} & \bullet\ar@{-}[]+0;[r]+0^<{\alpha_5}\ar@{-}[]+0;[d]+0^>{\alpha_8}  & \bullet\ar@{-}[]+0;[r]+0^<{\alpha_6} & \bullet\ar@{}[]+0;[r]^<{\alpha_7} & \\
& & & & \bullet & &
}$
\cr
$(F_4)$ & $\xymatrix{
\bullet\ar@{-}[]+0;[r]+0^<{\alpha_1}  & \bullet\ar@2{<-}[]+0;[r]+0^<{\alpha_2} & \bullet\ar@{-}[]+0;[r]+0^<{\alpha_3}  & \bullet\ar@{}[]+0;[r]^<{\alpha_4} & 
}$  \vspace{10pt}
\cr 
$(G_2)$ & $\xymatrix{
\bullet\ar@3{<-}[]+0;[r]+0^<{\alpha_1}  & \bullet\ar@2{}[]+0;[r]^<{\alpha_2} & 
}$
\end{tabular}\vspace{10pt}

Conversely,
each Dynkin diagram determines a unique root system up to isomorphism.
The key is to recover first the set of positive roots by working on induction on the height: $\textrm{ht}(\sum_in_i\alpha_i):=\sum_in_i$, and second, to recover the inner product (fixing arbitrarily the length of a short node). We only have  to take into account that:
\begin{itemize}
\item[\textbf{Fact 1}.] If $\alpha\in\Phi^+\setminus\Delta$, there is (at least one index) $i=1,\dots,n$ such that $\alpha-\alpha_i$ is a root;
\item[\textbf{Fact 2}.]
If  $\alpha,\beta\in\Phi$,
\begin{equation}\label{eq_strings}
\{\beta+m\alpha:m\in\RR\}\cap\Phi=\{\beta+m\alpha:m\in\ZZ,-r\le m\le q\}
\end{equation}
for $r$ and $q$  positive integers such that $\langle \beta,\alpha\rangle=r-q$  (if $\alpha,\beta\in\Delta$, necessarily such $r=0$, since $\alpha-\beta$ is not a root).
\end{itemize}

\noindent For instance, from the Dynkin diagram of type $G_2$, a set of simple roots is given by $\Delta=\{\alpha_1,\alpha_2\}$ and  
$\langle \alpha_1,\alpha_2\rangle\langle \alpha_2,\alpha_1\rangle=3 $ (there is a triple edge joining the two nodes). Hence, the Cartan matrix is 
\begin{equation}\label{eq_CartanmatrixG2}
C=\left(\begin{array}{cc}2&-1\\-3&2\end{array}\right)
\end{equation}

\noindent since  
$\langle \alpha_i,\alpha_j\rangle\in\ZZ_{<0}$ if $i\ne j$ and
$\alpha_1$ is a short root.
Thus,   the proportion between the lengths is  $\sqrt{3}$, and, as the cosine of the angle between $\alpha_1$ and $\alpha_2$ will be $-\sqrt{3}/2$, then the angle is 120$^\circ$. 
As $\langle \alpha_2,\alpha_1\rangle=-3 $, then $\alpha_1+\alpha_2$, $2\alpha_1+\alpha_2$ and $3\alpha_1+\alpha_2$ are the only roots of the form $i\alpha_1+\alpha_2$ with $i\ne0$. While, as $\langle \alpha_1,\alpha_2\rangle=-1 $, then $\alpha_1+2\alpha_2$ is not a root, so that $2\alpha_1+\alpha_2$ is the only root of height 3 and $3\alpha_1+\alpha_2$ the only one of height 4 ($2\alpha_1+2\alpha_2$ is not a root because it is the double of a root, see R$2$). Now, $\langle3\alpha_1+\alpha_2,\alpha_2\rangle=-3+2=-1$, so that $(3\alpha_1+\alpha_2)+\alpha_2$ is the only root of height 5, and of course there is not any more, since both $(3\alpha_1+2\alpha_2)+\alpha_i$ ($i=1,2$) are multiple of roots. That is, we have just obtained the root system in Figure~\ref{gdos}:
$$
\Phi_{G_2}=\pm\{\alpha_1,\alpha_2,\alpha_1+\alpha_2,2\alpha_1+\alpha_2,3\alpha_1+\alpha_2,3\alpha_1+2\alpha_2\}.
$$

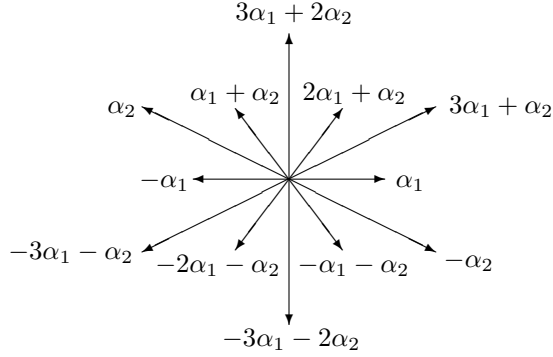
\begin{figure}
\begin{center}
\begin{picture}(0,80)
 \put(0,0){\vector(1,0){36}}
 \put(0,0){\vector(3,4){20}}
 \put(0,0){\vector(-3,4){20}}
\put(0,0){\vector(-1,0){36}}
  \put(0,0){\vector(3,-4){20}}
 \put(0,0){\vector(-3,-4){20}}
 \put(-37,30){$\alpha_1+\alpha_2$} \put(0,-35){$\,\, {-\alpha_1-\alpha_2}$}
 \put(40,-3){$\alpha_1$}\put(-56,-3){$-\alpha_1$}
 \put(5,30){$2\alpha_1+\alpha_2$}\put(-50,-35){$-2\alpha_1-\alpha_2$}
 \put(0,0){\vector(0,1){55}}
 \put(-20,60){$3\alpha_1+2\alpha_2$}
 \put(0,0){\vector(0,-1){55}}
 \put(-25,-63){$-3\alpha_1-2\alpha_2$}
 \put(0,0){\vector(2,1){55}}
 \put(60,25){$3\alpha_1+\alpha_2$}
 \put(0,0){\vector(-2,-1){55}}
 \put(-105,-30){$-3\alpha_1-\alpha_2$}
 \put(0,0){\vector(2,-1){55}}
 \put(58,-33){$-\alpha_2$}
 \put(0,0){\vector(-2,1){55}}
 \put(-68,25){$\alpha_2$}
\end{picture}\vskip 2.4cm
\caption{$\Phi_{G_2}=
\pm\{\alpha_1, \alpha_2, \alpha_1+\alpha_2, 2\alpha_1+\alpha_2,
3\alpha_1+\alpha_2,  3\alpha_1+2\alpha_2\}$.
\label{gdos}
}
\end{center}
\end{figure}

Now, the root system $\Phi$ determines the product of $\frg$. Before recalling the argument, note that the possibility of constructing $\frg$ (a basis joint with  the product of any pair of basic elements) does not prove the existence of a simple Lie algebra with such root system, but the uniqueness. In order to prove the existence,  one has   either to check that the constructed algebra satisfies the Jacobi identity (a straightforward but long and tedious task) or to find some concrete simple Lie algebra whose related root system is isomorphic to $\Phi $, that is, to find a model of $\frg$. Soon we will follow the second  way for proving the existence of a complex Lie algebra with root system of type $G_2$, but we previously want to emphasize that all the information of the Lie algebra is encoded in the Dynkin diagram, and with a bit of patience one could recover all the structure constants of the Lie algebra from the picture.

The idea of the proof is the following. For each $\alpha\in\Phi^+$,
we choose $h_\alpha=\frac{2t_\alpha}{(\alpha,\alpha)}\in\frh$ and some $x_\alpha\in\frg_\alpha$,
$x_{-\alpha}\in\frg_{-\alpha}$ such that $[x_\alpha,x_{-\alpha}]=h_\alpha$. In other words, the map
$$
x_\alpha\mapsto  \left(\begin{array}{cc}0&1\\0&0\end{array}\right),\quad
x_{-\alpha}\mapsto  \left(\begin{array}{cc}0&0\\1&0\end{array}\right),\quad
h_\alpha\mapsto  \left(\begin{array}{cc}1&0\\0&-1\end{array}\right),\quad
$$
provides an isomorphism between $S_\alpha:=\frg_{\alpha}\oplus\frg_{-\alpha}\oplus[\frg_{\alpha},\frg_{-\alpha}]$ and $\frsl_2(\CC)$.  Then 
\begin{equation}\label{eq_baseChev}
\{h_{\alpha_1},\dots,h_{\alpha_n}\}\cup\{x_\alpha:\alpha\in\Phi\}
\end{equation}
is a basis of $\frg$. For any $\alpha,\beta\in\Phi$ such that $\beta\ne\pm\alpha$, we have
$[x_{\alpha},x_\beta]=c_{\alpha,\beta}x_{\alpha+\beta}$
for some $c_{\alpha,\beta}\in\CC$. It is possible to scale $x_\alpha$ in such a way that, for all $\alpha,\beta\in\Phi$,
then $c_{-\alpha,-\beta}=-c_{\alpha,\beta}$.  Then $c_{\alpha,\beta}$ equals $0$ if $\alpha+\beta\notin\Phi$ and otherwise  equals\footnote{Of course, it is possible to be more precise about the determination of the sign here. But for our purpose, it is sufficient to say that, for \lq\lq some\rq\rq\  pairs of roots, the sign can be chosen arbitrarily and that determines the signs for all remaining pairs of roots. Also, most of the textbooks about the topic contain existence theorems guaranteeing  the existence of a complex semisimple Lie algebra whose root system has a prefixed Cartan matrix. The uniqueness is consequence of the Serre relations.}  $\pm k$ 
for $k$ the least positive integer for which $\alpha-k\beta\notin\Phi$.
In particular, all the structure constants for the chosen basis are in $\ZZ$. The basis is called a \emph{Chevalley basis}.

\smallskip

The proof is based on   representation theory of $\frsl_2(\CC)$: any irreducible representation of dimension $r+1$ works  in the same way as the action of
$$
h=X\frac{\partial}{\partial X}-Y\frac{\partial}{\partial Y},\quad
x=X\frac{\partial}{\partial Y},\quad y=Y\frac{\partial}{\partial X}
$$
on the homogeneous $r$th degree  polynomials in two variables $X$ and $Y$ (each monomial is connected with any other). This is applied to $S_\alpha\cong\frsl_2(\CC)$ acting on the vector space $\oplus_{i\in\ZZ} \frg_{\beta+i\alpha}$ to get a \lq\lq chain without holes\rq\rq, which allows to prove Fact~2.
\smallskip

When Killing discovered the five exceptional complex Lie algebras, mathematicians were not expecting to find them. It was a surprise, and not only for Killing. He was trying to prove that $\frsl_n(\CC)$, $\frso_n(\CC)$ and $\frsp_n(\CC)$ are the only simple finite-dimensional complex Lie algebras (at the beginning he was not even aware of $\frsp_n(\CC)$).  

This Killing paper  has been called \emph{the Greatest Mathematical Paper of all Time} in \cite{Coleman}, although Killing was not very proud of his work. In part the paper contains some mistakes (corrected in \cite{Cartantesis}), but probably the reason for his dissatisfaction  is that he wanted to find all the \textbf{real} Lie algebras, since he was interested in Geometry. The same idea may well be upheld by many potential readers, but we cannot forget the close relationship between complex and real Lie algebras. The classification of the real ones goes through the classification of the complex case: let us take into account that one of the main described tools has been the diagonalization of the semisimple operators, which is not always possible in the real case. Such (real) classification  was realized by Cartan in \cite{Cartanclasificareales}, after studying and completing Killing's work. \smallskip

\subsection{On simple real Lie algebras}\label{se_reales}

We will only mention a pair of facts about simple real Lie algebras. If $L$ is a simple real Lie algebra,
\begin{itemize}
\item Either $L$ is just a complex simple Lie algebra, but considered as a real Lie algebra;
\item Or $L^\CC=L\otimes_\RR\CC=L\oplus\mathbf{i}L$ is a complex simple Lie algebra. In this case $L$ is said a \emph{real form} of $L^\CC$. If $L^\CC$ has a root system $X$, then $L$ is called a real algebra of type $X$.
There are 17 real simple Lie algebras of exceptional types, two of them of type $G_2$, which will be our main goal of study today. 
\end{itemize}

We mentioned above that for any $X\in\{A_l,B_l,C_l,D_l,E_6,E_7,E_8,F_4,G_2\}$, there is a unique simple complex Lie algebra $\frg $ with related root system $X$. Now we will explain why at least there exist two real forms of $\frg$, that is, two real Lie algebras of type $X$, for each $X$.
If we take a Chevalley basis of $\frg$ as in Eq.~\eqref{eq_baseChev}, then
$$
L_s=\sum_{j=1}^n\RR h_{\alpha_j}\oplus\big(\sum_{\alpha\in\Phi}\RR x_\alpha\big)
$$
is a real Lie algebra such that $L_s^\CC=\frg$. This is clear by recalling that the structure constants of the Chevalley basis belong to $\ZZ$.
This real form is called \emph{split} due to the fact of possessing a root decomposition similar to that one in the complex case ($\ad h$ is diagonalizable for any $h\in \frh_0:=\sum_{j=1}^n\RR h_{\alpha_j}$ -with real eigenvalues-). It turns out that the signature of the Killing form $\kappa_s$ of $L_s$ coincides with $n$, the rank of $\frg$, because 
$\kappa_s\vert_{\frh_0}$ is positive definite and the decomposition 
$\frh_0\oplus\,\sum_{\alpha\in\Phi^+}\big(\RR x_\alpha\oplus\RR x_{-\alpha}\big)$ is an orthogonal decomposition where the hyperbolic planes $\RR x_\alpha\oplus\RR x_{-\alpha}$ do not contribute to the signature.

On the other hand,
$$
L_c=\sum_{j=1}^n\RR \mathbf{i}h_{\alpha_j}\oplus\sum_{\alpha\in\Phi^+}\big(\RR (x_\alpha-x_{-\alpha})\oplus\RR\mathbf{i}(x_{\alpha}+x_{-\alpha})\big)
$$
also satisfies $[L_c,L_c]\subset L_c$ and hence it is again a real form of $\frg$. It is not difficult to check that its Killing form (obtained by restriction of the Killing form of $\frg$) is negative definite. In particular $L_c$ cannot be isomorphic to $L_s$. This real form is called \emph{compact}, due to the following fact: 
being the Killing form negative definite implies that the related Lie group is compact by Myers' Theorem \cite[10.24]{ONeill} (any connected Lie group with negative definite Killing form is compact, according to \cite[11.11]{ONeill}). (Furthermore, any other real form with negative definite Killing form is isomorphic to $L_c$.)  
\smallskip

If $L$ is another  real form of $\frg$, then $L$ admits a $\ZZ_2$-grading $L=L_{\bar0}\oplus L_{\bar1}$, called \emph{Cartan decomposition} of $L$, such that $L_{\bar0}\oplus \mathbf{i}L_{\bar1}$ is a compact real form of $\frg$. From this, it is possible to conclude that there is a one-to-one correspondence between isomorphism classes of real forms of $\frg$ and conjugacy classes of involutive automorphisms of $\frg$. (Note that any $\ZZ_2$-grading on $\frg$ is produced -as the eigenspace decomposition- by an order two automorphism of $\frg$ and vice versa.)

 In the $G_2$-case, there is only one conjugacy class of order two automorphisms, since there is only one symmetric space quotient of the group $G_2$, namely, $G_2/\SO(4)$. This implies that there are only two real forms of the complex Lie algebra of type $G_2$, an interesting fact which will study in detail in Section~\ref{se_solodos}.


\section{A first linear model of (split) $G_2$}\label{se:modelog2}

\subsection{The model} 
Let $\cU=\FF^3$ be the column vectors  for $\FF\in\{\RR,\CC\}$, and let $\times$ denote the usual cross product on $\cU$. For any $x\in \cU$, let $l_x$ denote the coordinate matrix, in the canonical basis $\{e_i\}_{i=1,2,3}$ of $\FF^3$, of the map $y\mapsto x\times y$. Hence, for
\begin{equation}\label{loslx}
x=\left(\begin{array}{l}x_1\\x_2\\x_3\end{array}\right)\quad\Rightarrow \quad
l_x=\left(\begin{array}{ccc}0&-x_3&x_2\\x_3&0&-x_1\\-x_2&x_1&0\end{array}\right).
\end{equation}
\begin{proposition}\label{pr_defdeL}
$$
L=\left\{\left(\begin{array}{ccc}0&-2y^t&-2x^t\\x &a&l_y\\y&l_x&-a^t\end{array}\right): a\in\frsl_3(\FF),x,y\in \FF^3\right\}
$$
is a Lie subalgebra of $\frgl_7(\FF)=(\Mat_7(\FF),[ \,,\,])$ of dimension 14.
\end{proposition}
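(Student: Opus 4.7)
The plan is to first check the dimension by exhibiting a basis parametrization, and then to verify closure under the bracket by a block-matrix calculation that reduces to a short list of cases.

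For the dimension, the map
$$
\Psi\colon \frsl_3(\FF) \oplus \FF^3 \oplus \FF^3 \longrightarrow \Mat_7(\FF), \qquad (a,x,y)\longmapsto \begin{pmatrix}0&-2y^t&-2x^t\\ x & a & l_y\\ y & l_x & -a^t\end{pmatrix}
$$
is patently linear, and injective because the first column and first row recover $x$ and $y$ while the central $3\times 3$ block recovers $a$. Hence $L=\im\Psi$ has dimension $8+3+3=14$.

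For closure under the bracket I would exploit bilinearity and split $\Psi(a,x,y)=\Psi(a,0,0)+\Psi(0,x,0)+\Psi(0,0,y)$, thereby reducing the verification to the six brackets of ``pure'' elements. By direct block multiplication I expect the formulas
\begin{align*}
[\Psi(a,0,0),\Psi(b,0,0)] &= \Psi([a,b],0,0),\\
[\Psi(a,0,0),\Psi(0,x,0)] &= \Psi(0,\,ax,\,0),\\
[\Psi(a,0,0),\Psi(0,0,y)] &= \Psi(0,\,0,\,-a^t y),\\
[\Psi(0,x,0),\Psi(0,x',0)] &= \Psi(0,\,0,\,2\,x\times x'),\\
[\Psi(0,0,y),\Psi(0,0,y')] &= \Psi(0,\,2\,y\times y',\,0),\\
[\Psi(0,x,0),\Psi(0,0,y)] &= \Psi\bigl(-3\,xy^t + (y^tx)\,I,\,0,\,0\bigr).
\end{align*}

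The inputs that drive these identities are three elementary facts on $\FF^3$: antisymmetry $l_x^t=-l_x$ together with $x^t l_{x'}=(x\times x')^t$; the BAC-CAB rule, which in matrix form reads $l_u l_v=v u^t-(u^t v)I$; and the $\frsl_3$-equivariance identity $l_{Ax}=-A^t l_x - l_x A$, equivalent to $(Ax)\times y + x\times(Ay)+A^t(x\times y)=0$ whenever $\tr A=0$. Each is a one-line coordinate check (or can be obtained by linearizing the classical identity $(Ax)\times(Ay)=\det(A)\,A^{-t}(x\times y)$ at $A=I$).

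The only step where something delicate happens is the mixed case $[\Psi(0,x,0),\Psi(0,0,y)]$: its $(2,2)$-block comes out as $-2xy^t-l_y l_x$, which BAC-CAB rewrites as $-3xy^t+(y^tx)I$; this matrix is traceless \emph{precisely because} of the coefficient $-2$ in the top row of $\Psi$ (any other coefficient $c$ would give trace $(c+2)(y^tx)$, leaving $\frsl_3(\FF)$). This is the point where the apparently arbitrary constant in the definition is in fact forced. A short check that the $(3,3)$-block is then the negative transpose of the $(2,2)$-block confirms that the result lies in $L$. Once the six brackets above are verified, closure under $[\,,\,]$ follows by bilinearity, Jacobi is automatic as $L\subseteq\frgl_7(\FF)$, and the proposition is proved.
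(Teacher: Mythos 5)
Your proof is correct and follows essentially the same route as the paper: split into the six brackets of pure elements and verify each via the identities $l_{au}=-a^t l_u - l_u a$, $l_{x\times u}=-xu^t+ux^t$, and $l_u l_v = vu^t-(u^tv)I$. Your remark that the coefficient $-2$ is precisely what forces the $(2,2)$-block in the mixed bracket to be traceless is a nice observation not explicitly spelled out in the paper, though otherwise the arguments coincide.
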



\begin{proof}
The dimension of $L$ as a vector space is clear. We only have to see that $[A,B]=AB-BA\in L$ for any $A,B\in L$.
Let us denote $M_{(a,x,y)}= \left(\begin{array}{ccc}0&-2y^t&-2x^t\\x &a&l_y\\y&l_x&-a^t\end{array}\right)$.
We compute, for $a,b\in\frsl_3(\FF)$, and $x,y,u,v\in\cU$:  

\begin{itemize} 
\item[i)] $[M_{(a,0,0)},M_{(b,0,0)}]=M_{([a,b],0,0)}$;

Because $[a^t,b^t]=-[a,b]^t$ and $\tr([a,b])=0$.
\item[ii)] $[M_{(a,0,0)},M_{(0,u,0)}]=M_{(0,au,0)}$;

Because $l_{au}=-(l_ua+a^tl_u)$.
\item[iii)] $[M_{(a,0,0)},M_{( 0,0,y)}]=M_{( 0,0,-a^ty)}$; (the same argument as in ii).
\item[iv)] $[M_{(0,x,0)},M_{(0,u,0)}]=M_{(0,0,2x\times u)}$; 

Because obviously $l_xu-l_ux=2x\times u$, and not so obviously $l_{x\times  u}=-xu^t+ux^t$. This is a direct consequence of the identity of the cross product $(x\times  y)\times  z=(x\cdot z)y-(y\cdot z)x$ (also called  triple product expansion), since, relative to the canonical basis, the matrix of $(x\cdot -)y$ (it sends $e_j$ to $x_jy$) is $yx^t$ ($y_ix_j$ in the $(i,j)$th place). 
\item[v)] $[M_{(0,x,0)},M_{(0, 0,v)}]=M_{(\textrm{pr}_{\frsl_3(\FF)}(-3xv^t),0,0)}$;

When doing the bracket, we get $\left(\begin{array}{ccc}-2x^tv+2v^tx&0&0\\0 &-2xv^t-l_vl_x&0\\0&0&2vx^t+l_xl_v\end{array}\right)$.
There is $0$ in the $(1,1)$th position since $x^tv=x\cdot v=v^tx$. The matrix in position $(3,3)$ is the opposite-transpose of that one in $(2,2)$, since $ l_x$ and $l_v$ are skew-symmetric matrices. 
  Finally, observe that $$l_xl_v=(x_jv_i)_{i,j}-(\sum_k x_kv_k)I_3=vx^t-(v^tx)I_3,$$ so that $l_xl_v+2vx^t=3vx^t-\frac{\tr(3vx^t)}{\tr(I_3)}I_3=\textrm{pr}_{\frsl_3(\FF)}(3vx^t)$. (It is obvious that $\tr(vx^t)=v^tx$.)

\item[vi)] $[M_{(0,0,y)},M_{(0, 0,v)}]=M_{(0,2y\times v,0)}$; (analogous to iv).

\end{itemize}
\end{proof}

As before $\{e_i\}_{i=1}^3$ denotes  the canonical basis of $\FF^3$, and now $E_{ij}\in\Mat_{3}(\FF)$ denotes the \emph{unit} square matrix whose entries are all of them zero except for the $(i,j)$th, which is $1$. Take $\frh=\{ M_{(a,0,0)}: a\in\frsl_3(\FF)\ \textrm{diagonal matrix} \}$, which is an abelian subalgebra of $L$.
Define
$
\varepsilon_i\colon \frh\to\FF$, for $i=1,2,3$, as
$$
\varepsilon_i\left(h=\diag\{0,s_1,s_2,s_3,-s_1,-s_2,-s_3\}\right)=s_i\qquad (\sum_{i=1}^3s_i=0).
$$
Of course $\{\varepsilon_1,\varepsilon_2\}$ is a basis of $\frh^*$, since $\sum_{i=1}^3\varepsilon_i=0$.
We are computing now a basis of $L$ of eigenvectors for $\ad\, h$.
By using i), ii) and iii) above, 
$$\begin{array}{l}
{[}h,M_{(E_{ij},0,0)}]=M_{([\sum_k s_kE_{kk},E_{ij}],0,0)}=(s_i-s_j)M_{(E_{ij},0,0)}=(\varepsilon_i-\varepsilon_j)(h)M_{(E_{ij},0,0)},\\
{[}h,M_{(0,e_i,0)}]=M_{(0,\sum_k s_kE_{kk}e_i,0)}=M_{(0,s_ie_i,0)}=\varepsilon_i(h)M_{(0,e_i,0)},
\\
{[}h,M_{(0,0,e_i)}]=M_{(0,0,-(\sum_k s_kE_{kk})^te_i)}=M_{(0,0,-s_ie_i)}=-\varepsilon_i(h)M_{(0,e_i,0)};
\end{array}
$$
and hence the Lie algebra $L$ can be decomposed as
$$
L=\frh\oplus\left(\sum_{\alpha\in\Phi}L_\alpha\right),
$$
being $\Phi= \{\varepsilon_i-\varepsilon_j:i\ne j\}\cup\{\pm\varepsilon_i:i=1,2,3\}\subset\frh^*$, for
$$\begin{array}{ll}
L_{\varepsilon_i-\varepsilon_j}=\FF M_{(E_{ij},0,0)},&(i\ne j)\\
L_{\varepsilon_i}=\FF M_{(0,e_i,0)},&\\
L_{-\varepsilon_i}=\FF M_{(0,0,e_i)},&  (i=1,2,3).
\end{array}
$$
Thus, $L$ has a root decomposition (even for $\FF=\RR$). Does this mean that $L$ is simple? Yes, it does:
\begin{proposition}
L is a simple Lie algebra of type $G_2$.
\end{proposition}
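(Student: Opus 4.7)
The root space decomposition is already in hand: $\dim \frh = 2$ and $|\Phi| = 12$ with each $L_\alpha$ one-dimensional, so $\dim L = 14$. The plan is to first establish simplicity using the decomposition, and then identify the type as $G_2$ by counting.

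For simplicity, let $I$ be a nonzero ideal of $L$. Since $I$ is $\ad(\frh)$-stable and $\ad h$ is diagonalizable for every $h \in \frh$, we have $I = (I \cap \frh) \oplus \bigoplus_{\alpha \in \Psi} L_\alpha$ for some $\Psi \subseteq \Phi$. If $\Psi = \emptyset$, any nonzero $h \in I \cap \frh$ is separated by some $\alpha \in \Phi$ (because $\Phi$ spans $\frh^*$), and then $[h, L_\alpha] = \alpha(h) L_\alpha \subseteq I$ contradicts $\Psi = \emptyset$. So we may pick $\alpha \in \Psi$, and then $[L_{-\alpha}, L_\alpha] = \FF h_\alpha \subseteq I$, with $h_\alpha \ne 0$ by an immediate check in the formulas inside the proof of Proposition~\ref{pr_defdeL}. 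Now $h_\alpha$ acts on $L_\beta$ by the scalar $\beta(h_\alpha)$, so $L_\beta \subseteq I$ for every $\beta$ not orthogonal to $\alpha$; iterating, each newly acquired root produces a new $h_\beta$ that catches still more root spaces. Since the graph on $\Phi$ whose edges are the nonorthogonal pairs is connected (visible from the explicit list of 12 roots), we sweep up every $L_\beta$, then every $h_\beta$, hence all of $\frh$, and conclude $I = L$.

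For the identification, the classification of rank-$2$ root systems recalled in Section~\ref{se:intro} offers only $A_1 \times A_1$ ($4$ roots), $A_2$ ($6$), $B_2$ ($8$), and $G_2$ ($12$); therefore $|\Phi| = 12$ alone pins down the type, provided $\Phi$ is genuinely a root system. To verify this, I would compute the Killing form on $\frh$ via $\kappa(h,h') = \sum_{\alpha \in \Phi} \alpha(h)\alpha(h')$ (valid because each root space is one-dimensional), obtaining, after cancellation using $\sum_{i=1}^3 \varepsilon_i = 0$, a positive scalar multiple of the standard inner product $\sum_i s_i s_i'$ on the traceless plane. With this inner product, choosing $\alpha_1 := \varepsilon_1$ (short) and $\alpha_2 := \varepsilon_2 - \varepsilon_1$ (long) as simple roots, the six positive roots become $\alpha_1,\; \alpha_2,\; \alpha_1+\alpha_2 = \varepsilon_2,\; 2\alpha_1+\alpha_2 = -\varepsilon_3,\; 3\alpha_1+\alpha_2 = \varepsilon_1-\varepsilon_3,\; 3\alpha_1+2\alpha_2 = \varepsilon_2-\varepsilon_3$, matching Figure~\ref{gdos} exactly and yielding the Cartan matrix \eqref{eq_CartanmatrixG2}.

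The main obstacle I anticipate is the connectedness step in the simplicity argument: it is routine but requires inspecting the root geometry to check that successive applications of $\ad h_\beta$ from a single starting root reach every other root space. This is purely combinatorial once the list of 12 roots is in front of you, and everything else rests on the matrix bracket identities already set up in the proof of Proposition~\ref{pr_defdeL}.
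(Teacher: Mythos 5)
Your proof is correct, and the simplicity argument is a genuinely different route from the paper's. The paper sidesteps the sweep entirely by exploiting the visible subalgebra $S=\{M_{(a,0,0)}:a\in\frsl_3(\FF)\}\cong\frsl_3(\FF)$: if a nonzero ideal $I$ meets $S$ nontrivially, simplicity of $\frsl_3$ forces $S\subseteq I$, hence $\frh\subseteq I$ and then $L=\frh+[\frh,L]\subseteq I$; if instead $I\cap S=0$, the homogeneity of $I$ forces some short root space $L_{\pm\varepsilon_i}$ into $I$, whence $I$ contains $0\ne[L_{\varepsilon_i},L_{-\varepsilon_i}]\subseteq\frh\subset S$, a contradiction. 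This buys a short proof with almost no combinatorics. Your sweep-up via the elements $h_\alpha$ is the more generic argument -- it would establish simplicity from any root decomposition with one-dimensional root spaces and a connected nonorthogonality graph -- but its cost is computational: you must verify $[L_\alpha,L_{-\alpha}]\ne0$ for all twelve roots (the paper's footnote only treats $\pm\varepsilon_i$; for $\varepsilon_i-\varepsilon_j$ it is the $\frsl_3$ bracket $[E_{ij},E_{ji}]=E_{ii}-E_{jj}$), and you must then compute the numbers $\beta(h_\alpha)$ explicitly to establish connectedness, since until the Killing form has been identified your phrase ``$\beta$ not orthogonal to $\alpha$'' must be read operationally as $\beta(h_\alpha)\ne0$; the two notions coincide only in hindsight. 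For the type identification both arguments ultimately reduce to matching the explicit list of roots against the $G_2$ system derived in Section~\ref{se:intro}; the paper takes $\alpha_1:=\varepsilon_2$, $\alpha_2:=\varepsilon_1-\varepsilon_2$ directly, whereas you take a different base and additionally propose a count-plus-Killing-form verification (which is sound, and $\kappa|_\frh$ does come out proportional to $\sum_i s_is_i'$), a slightly heavier route than the paper's straight matching.
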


\begin{proof}
Take $0\ne I$ an ideal of $L$. As $[\frh,I]\subset I$, then $I=I\cap \frh\oplus (\oplus_{\alpha\in\Phi}I\cap L_\alpha)$.
Consider $S=\{ M_{(a,0,0)}: a\in\frsl_3(\FF)\}$, which is a Lie subalgebra of $L$ isomorphic to $\frsl_3(\FF)$.
In case $I\cap S\ne0$, by the simplicity of $S$ we have $I\cap S=S$, so that $\frh\subset S\subset I$ and hence
$L=\frh+[\frh,L]\subset I+[I,L]\subset I$ and $I=L$.
The other possibility, $I\cap S=0$, does not occur, since in such a case there is some $\alpha\in\pm\{\varepsilon_i:i=1,2,3\}$ such that $I\cap L_\alpha\ne0$. As $L_\alpha$ has dimension 1, then $L_\alpha\subset I$ and so
$[L_\alpha,L_{-\alpha}]\subset [I,L]\subset I$. This is a contradiction  taking into account that\footnote{A well known property of the root decompositions of simple Lie algebras is $0\ne [L_\alpha,L_{-\alpha}]\subset\frh$. In this case, as
$e_ie_i^t=E_{ii}$, its projection on $\frsl_3(\FF)$ is $E_{ii}-\frac13 I_3$, so 
$[L_{\varepsilon_i},L_{-\varepsilon_i}]\ni[M_{(0,e_i,0)},M_{(0,0,e_i)}]=M_{(-2E_{ii}+E_{i+1,i+1}+E_{i+2,i+2},0,0)}\ne0$.
}
 $0\ne [L_\alpha,L_{-\alpha}]\subset\frh\subset S$, but $I$ does not contain any nonzero element of $S$. This finishes the simplicity.

Now we call $\alpha_1:=\varepsilon_2$ and $\alpha_2:=\varepsilon_1-\varepsilon_2$. Hence
$$
\begin{array}{ll}
\alpha_1+\alpha_2=\varepsilon_1,\qquad&
2\alpha_1+\alpha_2=\varepsilon_1+\varepsilon_2=-\varepsilon_3,\\
3\alpha_1+\alpha_2=\varepsilon_2-\varepsilon_3,\qquad\ &
3\alpha_1+2\alpha_2=\varepsilon_1-\varepsilon_3;
\end{array}
$$
so that 
$
\Phi=\pm\{\alpha_1,\alpha_2,\alpha_1+\alpha_2,2\alpha_1+\alpha_2,3\alpha_1+\alpha_2,3\alpha_1+2\alpha_2\}
$
is the root system of type $G_2$ described in Section~\ref{se:intro}.
\end{proof}

The above propositions are valid for both $\CC$ and $\RR$: if we denote $L_\FF$ for distinguishing the considered field, 
$L_\RR$ is a real form of $L_\CC$. 
From now on, we shall denote the
simple complex Lie algebra $L_\CC$ by $\frg_2$, and the real form $L_\RR$ by $\frg_{2,2}$. The second number in the notation makes reference to the signature of the Killing form, which is $2$, taking into account that $L_\RR$ is the split real form of $L_\CC$ since it admits a root decomposition, or alternatively  $\kappa\vert_{L_{\alpha}\oplus L_{-\alpha}}\equiv \left(\begin{array}{cc}0&1\\1&0\end{array}\right)$.
\medskip

I would like to remark that finding a \emph{model} (a description of the algebra independent from the roots and from the basis corresponding to such roots) has been an easy task for us now, but surprisingly, it took many years. A very suggestive paper about the exceptional group $G_2$, which includes many references and curiosities, is \cite{AgriG2}. There, the references about the first results on $G_2$ can be consulted. These results were not immediate after Killing's discovery, probably because the whole Lie theory was being developed. Both Élie Cartan and Friedrich Engel obtained, independently in 1893, that the Lie algebra of vector fields on $\CC^5$ whose local flows preserve a plane distribution given by certain Pfaffian system, is the Lie algebra of type
$\frg_2$ (that is, $\frg_2$ appears as the infinitesimal symmetries of this distribution). Both authors gave a second (this time different) geometrical realization of $\frg_2$; but an algebraical realization had to wait until 1900 \cite{EngelG2}, when Engel 
finally gave a  realization
including the missing compact form
 (which is besides a global version, for groups not only for algebras). His description based on 3-forms on $\RR^7$ will be carefully developed in Section~\ref{genericas}. The most popular description of $\frg_2$ as the derivation algebra of the octonions (both split and division octonions provide the split and the compact real forms, respectively) is due to Cartan
\cite{Cartanclasificareales} (in 1914, although already mentioned in a letter in 1908). We will arrive at it in Section~\ref{se:octo}. 

\begin{remark}\label{re_grad}
As a consequence of the computations made in the proof of Proposition~\ref{pr_defdeL}, if we define $\cL=\cL_{\bar 0}\oplus \cL_{\bar 1}\oplus \cL_{\bar 2}$
by
$$
\cL_{\bar 0}=\frsl_3(\FF),\quad \cL_{\bar 1}=\cU=\FF^3,\quad \cL_{\bar 2}=\cU^*,
$$
with the product given by $[\cL_{\bar i},\cL_{\bar j}]\subset \cL_{\bar i+\bar j}$ such that $\cL_{\bar 0}$ is subalgebra and:
\begin{itemize}
\item The actions of $\cL_{\bar 0}$ on $\cL_{\bar 1}$ and $\cL_{\bar 2}$ are the natural  one and the dual one, respectively;
\item $[x,y]=(2x\times y)^t$ and $[x^t,y^t]=2x\times y$;
\item $[x,y^t]=-3\textrm{pr}_{\frsl_3(\FF)}(xy^t)  $;
\end{itemize}
then $\cL$ is a Lie algebra of type $G_2$ (just $\frg_2$ if $\FF=\CC$ and $\frg_{2,2}$  if $\FF=\RR$). 
We are identifying the dual of $\FF^3$ with the row vectors $\{v^t:v\in\FF^3\}$,  thanks to the map $\cU\times \cU^*\to \FF$ given by $(x,u^t)\mapsto u^tx\in\FF$. \smallskip

A model equivalent to this one can be consulted, for instance, in \cite[\S22.4]{FultonHarris}, where some linear models with the same philosophy are exhibited for other exceptional Lie algebras, in particular for type $E_8$ as $\frsl_9(\FF)\oplus \cV\oplus \cV^*$ with $\cV=\wedge^3\FF^9$. 
The underlying fact is that, in the complex case,  any reductive subalgebra of maximal rank of a simple Lie algebra (in our case $\frg_2$, the subalgebra is $\frsl_3(\CC)$)
induces a grading on this algebra by means of an abelian group (in our case,  a $\ZZ_3$-grading), in such a way that the nontrivial components
of the grading are irreducible modules (in our case, the natural one and the dual of the natural one).   
This    philosophy is made explicit in \cite[Theorem~1]{modelosf4}, and applied  for describing some linear models of the complex -consequently, also the real split- exceptional Lie algebra of type $F_4$.
 This kind of models based in linear algebra turn out to be very comfortable to work with. 
 
 For models on the compact algebra of type $G_2$, one has to bear in mind that compact algebras are never graded by any group different from $\ZZ_2^r$. In spite of that,  one can find in
 \cite{modelosg2} some linear models of the compact real algebra of type $G_2$ (in a more general context, with few restrictions on the underground field), whose  underlying common feature is the existence of a \lq\lq nice\rq\rq\  
 (simple or semisimple) subalgebra  $\frh$ such that  $(\frg_2,\frh)$ is a reductive pair 
 -related to reductive homogeneous spaces-. We will exhibit such a compact model in Proposition~\ref{pr_modelocompacto} as a consequence of the relationship between $G_2$ and the six-dimensional sphere.
 
Other well known works are  \cite{Vinberg} on some unified models,    \cite{Kantor} on models based in $\ZZ$-gradings,
and specially 
\cite{ Tits}, which  constructs all the exceptional simple Lie algebras in a uniform way by using alternative and Jordan algebras, by a procedure which today is called \emph{Tits construction}.
\end{remark}

\subsection{A little bit of representation theory}\label{se:mod}

We consider the ground field $\FF$ to be either $\CC$ or $\RR$. Let $V=\FF^7$ throughout this and the next sections.

First we can observe that $V$ is an irreducible representation of $L_\FF$, with the natural action by matrix multiplication. Write the elements in $V$ in blocks of sizes $1+3+3$. Indeed, it is trivial that
$$M_{(a,x,y)}\left(\begin{array}{c}
s\\u\\v
\end{array}\right)=\left(\begin{array}{c}
-2y^tu-2x^tv\\sx+au+y\times v\\sy+x\times u-a^tv
\end{array}\right),
$$
if $s\in\FF$, $u,v\in\cU=\FF^3$.
In particular, for $h=\diag\{0,s_1,s_2,s_3,-s_1,-s_2,-s_3\}\in\frh$, 
$$
h\left(\begin{array}{c}
1\\0\\0
\end{array}\right)=\left(\begin{array}{c}
0\\0\\0
\end{array}\right),\quad
h\left(\begin{array}{c}
0\\e_i\\0
\end{array}\right)=s_i\left(\begin{array}{c}
0\\e_i\\0
\end{array}\right),\quad
h\left(\begin{array}{c}
0\\0\\e_i
\end{array}\right)=-s_i\left(\begin{array}{c}
0\\0\\e_i
\end{array}\right),
$$
so that $V=\FF^7$ decomposes as $V=\oplus_{\mu\in\frh^*}V_\mu$, for 
$$
V_0=\FF\left(\begin{array}{c}
1\\0\\0
\end{array}\right),\quad V_{\varepsilon_i}=\FF\left(\begin{array}{c}
0\\e_i\\0
\end{array}\right),\quad V_{-\varepsilon_i}=\FF\left(\begin{array}{c}
0\\0\\e_i
\end{array}\right),\quad i=1,2,3.
$$
The set $\Lambda=\{\mu\in\frh^*:V_\mu\ne0\}$ is called the set of \emph{weights},
for $V_\mu:=\{v\in V:hv=\mu(h)v\ \forall h\in\frh\}$. Now, any nonzero $L_\FF$-submodule $W\le V$ is homogeneous and  $V_\mu\cap W\ne0$ for some $\mu\in\frh^*$. Then $V_\mu\subset W$, and, by acting the root spaces $L_\alpha$ we get the whole $V\subset W$, and consequently the irreducibility of $V$.\smallskip

This behavior is not particular for $L_\FF$ and $V$, but any finite-dimensional representation $W$ of a complex semisimple Lie algebra $\frg$ decomposes as a direct sum of \emph{weight spaces} $W=\oplus_{\mu\in\Lambda}W_\mu$, which clearly satisfy $\frg_\alpha W_\mu\subset W_{\alpha+\mu}$ for all $\alpha\in\Phi$ and $\mu\in\Lambda$. If the module is irreducible, then it is generated by any nonzero element in $  W_\mu$ for a weight $\mu$ such that $\alpha+\mu\notin\Lambda$ for all $\alpha\in\Phi^+$. This weight is called \emph{maximal} and the module $W$ is usually denoted by $V(\mu)$.
In our example the weights of $V$ are just the short roots in $\Phi$ and $\{0\}$, so that the  maximal weight of $V$ is $-\varepsilon_3=2\alpha_1+\alpha_2=\omega_1$ (the short maximal root), where $\omega_i\in\frh^*$ is (always) defined by $\langle \omega_i,\alpha_j\rangle=\delta_{ij}$. (This is   possible for any semisimple  $\frg$, since Cartan matrix is invertible.)
The possible abstract weights   $\mu\in\frh^*$ such that  $\mu$ is the maximal weight of some irreducible module are precisely $\sum_{i=1}^n{s_i}\omega_i$, with $s_i\in\ZZ_{\ge0}$, which are called \emph{dominant weights} ($\Lambda^+$). The representations $V(\omega_i)$ are called the \emph{fundamental representations}, which in our $\frg_2$-case are $V(\omega_1)\cong\CC^7$ and $V(\omega_2)\cong \frg_2$, the natural and the adjoint representation respectively ($3\alpha_1+2\alpha_2=\omega_2$). 
This implies that the nontrivial $\frg_2$-representation of the least possible dimension is just $\CC^7$, so that, if one wants to see the complex Lie algebra of type $G_2$ as a subalgebra of $\frgl_m(\CC)$ for some $m$, then $m\ge7$, what explains in part  our election of model in the previous subsection.\smallskip

All this tells us how   representation theory works  for the complex Lie algebra $\frg_2=L_\CC$. Any finite-dimensional representation is completely reducible and hence direct sum of irreducible modules. Each summand is (isomorphic to) $V(s_1\omega_1+s_2\omega_2)$ for some $s_1,s_2\in\ZZ_{\ge0}$, so that it lives in $V(\omega_1)^{\otimes s_1}\otimes V(\omega_2)^{\otimes s_2}$,
that is, it is a submodule of a tensor product of copies of the adjoint module $\frg_2$ and of the natural module $\CC^7$.
\smallskip

In any case, as $L_\FF$ has dimension $14$ but $\dim\frgl_7(\FF)=49$ (a long \emph{distance}), there is still work to do until understanding what is  $L_\FF$ preserving, or what is characterizing $L_\FF$.

\medskip


\section{Understanding $G_2$  better}
\subsection{$G_2$ and the cross product in $\FF^7$}\label{se:cross}

Let $n\colon V\to \FF$ be the quadratic form whose related matrix in the canonical basis $\{e_i\}_{i=1,\dots,7}$ of $V=\FF^7$ is 
\begin{equation}\label{eq_lan}
N=\left(\begin{array}{ccc}
-1&0&0\\0&0&-2I_3\\0&-2I_3&0
\end{array}\right)\Rightarrow n\left(\begin{array}{c}
s\\u\\v
\end{array}\right)=-s^2-2u^tv-2v^tu=-s^2-4u\cdot v,
\end{equation}  
where $\cdot$ denotes the usual scalar product if $\FF=\RR$ and the usual inner product if $\FF=\CC$.
Observe that in the real case the signature of this norm $n$ is $(4,3)$, since in some orthogonal basis the matrix is $\diag\{-I_4,I_3\}$.

\begin{lemma}\label{le_antisim}
The algebra $L_\FF$ defined in Proposition~\ref{pr_defdeL} satisfies
$$
L_\FF\subset\frso(V,n)=\{f\in\frgl(V):n(f(X),Y)+n(X,f(Y))=0\ \forall X,Y\in V\}.
$$
\end{lemma}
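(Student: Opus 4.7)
The plan is to verify directly from the matrix form of $M_{(a,x,y)}$ that it satisfies the skew-adjoint condition $NM+M^tN=0$, where $N$ is the matrix of the bilinear form associated to $n$ (as in \eqref{eq_lan}). Since $L_\FF$ is linear in the parameters $(a,x,y)$, it is enough to check the identity block by block in the $1+3+3$ decomposition of $V=\FF^7$.

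First I would record the skew-symmetry $l_x^t=-l_x$ and $l_y^t=-l_y$, which is immediate from \eqref{loslx}. Transposing $M=M_{(a,x,y)}$ then yields
$$
M^t=\begin{pmatrix}0&x^t&y^t\\-2y&a^t&-l_x\\-2x&-l_y&-a\end{pmatrix}.
$$
Next I would compute $NM$ and $M^tN$ as $3\times3$ block matrices. Because $N$ is (up to the $-1$ in the corner) a block antidiagonal matrix with factor $-2I_3$, left-multiplication by $N$ just swaps the last two block rows, negates the first, and scales by $-2$; right-multiplication by $N$ does the analogous operation on columns. Thus both products are completely explicit and the comparison is mechanical.

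The key observation that makes the check succeed is that the scalar $-2$ on the off-diagonal of $N$ is exactly what is needed to match the coefficient $-2$ in front of $x^t,y^t$ in the top row of $M$, so that the $(1,2)$ and $(1,3)$ blocks of $NM$ and $M^tN$ cancel; at the same time the minus signs introduced by $l_x^t=-l_x$ and $l_y^t=-l_y$ cancel the minus signs introduced by swapping rows against columns, so the $(2,2)$, $(3,3)$, $(2,3)$ and $(3,2)$ blocks cancel as well. There is no serious obstacle here, and the only thing one has to be careful about is bookkeeping of signs; the choice of the constants $-2$ in the definition of $M_{(a,x,y)}$ and $n$ is precisely calibrated so that the equality $NM=-M^tN$ holds on the nose.

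Finally, since $NM+M^tN=0$ is the coordinate translation of $n(MX,Y)+n(X,MY)=0$ for every $X,Y\in V$, this shows $M\in\frso(V,n)$ and hence $L_\FF\subset\frso(V,n)$, completing the proof.
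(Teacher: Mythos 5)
Your proposal is correct, and I verified that the block identity $NM+M^tN=0$ does hold exactly as you predict: with
$$
NM=\begin{pmatrix}0&2y^t&2x^t\\-2y&-2l_x&2a^t\\-2x&-2a&-2l_y\end{pmatrix},\qquad
M^tN=\begin{pmatrix}0&-2y^t&-2x^t\\2y&2l_x&-2a^t\\2x&2a&2l_y\end{pmatrix},
$$
everything cancels termwise, with the skew-symmetry $l_x^t=-l_x$, $l_y^t=-l_y$ supplying the sign on the $l$-blocks. However, your route differs from the paper's. The paper does not compute $M^t$ or verify the matrix equation; instead it uses polarization to reduce the claim to the single scalar identity $n(f(X),X)=0$ for all $X\in V$, then evaluates $n\bigl(M_{(a,x,y)}X,X\bigr)$ directly from the formulas for the action and for $n$. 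In that evaluation most terms cancel by inspection and the only non-obvious vanishing needed is $(y\times v)^t v=\det(y,v,v)=0$ and its twin $(x\times u)^t u=0$, i.e. the alternating property of the scalar triple product — exactly the cross-product analogue of your $l_x^t=-l_x$. Your approach is a full matrix bookkeeping check and has the virtue of being completely mechanical (nothing to notice once $M^t$ is written down); the paper's approach is leaner because it reduces a $7\times7$ matrix identity to one scalar, at the cost of invoking the polarization step and recognizing the triple-product identity. Both are correct elementary verifications of the same fact.
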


\begin{proof}
It suffices to check $n(f(X),X)=0$ for all $X\in V$, by bilinearity. With our notation,
$$\begin{array}{l}n\left(M_{(a,x,y)}\left(\begin{array}{c}
s\\u\\v
\end{array}\right),\left(\begin{array}{c}
s\\u\\v
\end{array}\right)\right)= \left(\begin{array}{c}
-2y^tu-2x^tv\\sx+au+y\times v\\sy+x\times u-a^tv
\end{array}\right)^t \left(\begin{array}{c}
-s\\-2v\\-2u
\end{array}\right)\vspace{5pt}\\
=2s(y^tu+x^tv)-2\big(sx^tv+(au)^tv+(y\times v)^tv+sy^tu+(x\times u)^tu-(a^tv)^tu\big)\\
=-2((au)^tv-v^t(au))=0,
\end{array}
$$
since $(y\times v)^tv=\det(y,v,v)=0$ is the mixed product in $\FF^3$.
\end{proof}

Thus $n$ is $L_\FF$-invariant. But $\frso(V,n)$ has dimension $\left(\begin{array}{c}7\\2\end{array}\right)=21$, so that not every element preserving the norm belongs to $L_\FF$ ($\frg_2$ and $\frg_{2,2}$ have to preserve something besides the norm).

\begin{df}\label{def_crossproduct}
We will say that a bilinear map $\times\colon W\times W\to W$ is a \emph{cross product} if there exists a norm (nondegenerate quadratic form)
$q\colon W\to\FF$ such that
\begin{equation*}
\begin{array}{l}
\bullet\ q(u\times v,u)=q(u\times v,v)=0\vspace{3pt}\\
\bullet\ q(u\times v,u\times v)=\left|\begin{array}{cc}q(u ,u)&q(u ,v)\\q(v ,u)&q(v ,v)\end{array}\right|
\end{array}
\end{equation*} 
for all $u,v\in W$.
\end{df}

There are no cross products in  arbitrary dimension.
But, for dimension 7, there are:

\begin{lemma}\label{le_cross}
The following is a cross product in $V=\FF^7$:
$$
\left(\begin{array}{c}
s\\u\\v
\end{array}\right)
\times \left(\begin{array}{c}
t\\x\\y
\end{array}\right)
:=\left(\begin{array}{c}
2u^ty-2v^tx\\sx-tu-2v\times y\\-sy+tv+2u\times x
\end{array}\right),
$$
for the norm given in Eq.~\eqref{eq_lan}.
\end{lemma}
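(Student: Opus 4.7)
The plan is to verify the two defining conditions of Definition~\ref{def_crossproduct} by direct computation. Both conditions are polynomial identities in the components of $X=(s,u,v)^t$ and $Y=(t,x,y)^t$, so the proof reduces to bookkeeping with the ordinary cross product and scalar product on $\FF^3$. I would first polarize the norm in \eqref{eq_lan} to obtain the symmetric bilinear form
$$
n(X,Y)\;=\;-st-2u^ty-2v^tx.
$$

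For the orthogonality $n(X\times Y,X)=0$, I would apply this formula to $X\times Y$ as given in the statement. The expansion produces, on the one hand, terms of the form $(v\times y)^tv$ and $(u\times x)^tu$, which vanish because the $3$-dimensional cross product satisfies $a\cdot(a\times b)=0$; on the other hand, the surviving monomials cancel in pairs using $u^tv=v^tu$ and similar. The companion identity $n(X\times Y,Y)=0$ then either follows by an identical calculation or, more economically, by noting that the given formula is manifestly antisymmetric in $X,Y$.

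For the Gram–determinant condition
$$
n(X\times Y,X\times Y)=n(X,X)\,n(Y,Y)-n(X,Y)^{2},
$$
I would expand both sides as polynomials in the scalar invariants. Writing $X\times Y=(w_0,w_1,w_2)$, the left-hand side equals $-w_0^2-4\,w_1^tw_2$; in the product $w_1^tw_2$ all terms of the form $a\cdot(a\times b)$ again drop out, leaving the single nontrivial term $(v\times y)^t(u\times x)$. The crucial ingredient here is the classical $3$-dimensional Binet–Cauchy identity
$$
(a\times b)\cdot(c\times d)=(a\cdot c)(b\cdot d)-(a\cdot d)(b\cdot c),
$$
which converts this term into a combination of quadratic scalar products. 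The right-hand side expands straightforwardly as $(s^2+4u^tv)(t^2+4x^ty)-(st+2u^ty+2v^tx)^{2}$.

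The only real obstacle is organizational: once both sides have been rewritten as $\FF$-linear combinations of the eight scalar monomials $s^2x^ty$, $t^2u^tv$, $(u^tv)(x^ty)$, $(u^ty)(v^tx)$, $(u^ty)^2$, $(v^tx)^2$, $st(u^ty)$ and $st(v^tx)$, one matches coefficients and checks agreement term by term. No case distinction between $\FF=\RR$ and $\FF=\CC$ is needed, since every pairing used is symmetric bilinear (not Hermitian), so exactly the same identities hold in both settings.
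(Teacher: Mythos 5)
Your proposal is correct and follows essentially the same strategy as the paper's proof: verify both conditions of Definition~\ref{def_crossproduct} by direct expansion, with the cancellations in the orthogonality step coming from $a\cdot(a\times b)=0$ (equivalently $\det(a,a,b)=0$) and the Gram-determinant identity reducing, after trivial cancellations, to the $3$-dimensional Lagrange identity, which is exactly your Binet--Cauchy formula $(a\times b)\cdot(c\times d)=(a\cdot c)(b\cdot d)-(a\cdot d)(b\cdot c)$. The one small improvement you make is to deduce $n(X\times Y,Y)=0$ from the visible antisymmetry of the formula rather than repeating the calculation, which is a valid and slightly more economical observation than the paper's "similarly."
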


\begin{proof}
First, for $X=\left(\begin{array}{c}
s\\u\\v
\end{array}\right)$ and $Y= \left(\begin{array}{c}
t\\x\\y
\end{array}\right)$,
$$
\begin{array}{rl}
n(X,X\times Y)&=(-s\,-2v^t\,-2u^t)\left(\begin{array}{c}
2u^ty-2v^tx\\sx-tu-2v\times y\\-sy+tv+2u\times x\end{array}\right)\\
&=-2s(u^ty)+2s(v^ty)-2sv^tx+2tv^tu+4v^t(v\times y)\\&\quad+2su^ty-2tu^tv-4u^t(u\times x)\\
&=4\det(v,v,y)-4\det(u,u,x)=0.
\end{array}
$$
Similarly $n(Y,X\times Y)=0$. Finally, recall that, by linearizing the identity $(v\times y)\cdot(v\times y)=(v\cdot v)(y\cdot y)-(v\cdot y)(y\cdot v)$, we obtain the following  form of the Lagrange identity in $\FF^3$:
$$
(v\times y)\cdot(u\times  x)=(v\cdot u)(y\cdot x)-(v\cdot x)(y\cdot u).
$$
Hence
$$
\begin{array}{l}
n(X,Y)^2-n(X)n(Y)+n(X\times Y)=\vspace{3pt}\\
\ \ =(st+2v^tx+2u^ty)^2-(s^2+4u\cdot v)(t^2+4x\cdot y)-4(u\cdot y-v\cdot x)^2\\
\ \quad-4(sx-tu-2v\times y)\cdot (-sy+tv+2u\times x)\vspace{3pt}\\
\ \ =s^2t^2+4(v\cdot x)^2+4(u\cdot y)^2+4stv\cdot x+4stu\cdot y+8(v\cdot x)(u\cdot y)\\
\ \quad -s^2t^2-4s^2x\cdot y-4t^2u\cdot v-16(u\cdot v)(x\cdot y) -4(u\cdot y)^2-4(v\cdot x)^2+8(u\cdot y)(v\cdot x)\\
\ \quad +4s^2x\cdot y-4stx\cdot v-4stu\cdot y+4t^2u\cdot v+16(v\times y)\cdot (u\times x)\vspace{3pt}\\
\ \ =16(v\cdot  x)(u\cdot  y)-16(u\cdot  v)(x\cdot  y)+16(v\times y)\cdot  (u\times x)=0.
\end{array}
$$
\end{proof}
$L_\FF$ is precisely the Lie algebra making $\times $ invariant.
\begin{proposition}\label{pr_conservacross}
$L_\FF=\Der(V,\times).$
\end{proposition}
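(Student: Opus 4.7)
My plan has two halves: show $L_\FF \subseteq \Der(V, \times)$ first, then the reverse.

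For the forward inclusion, I would verify the Leibniz identity $f(X \times Y) = f(X) \times Y + X \times f(Y)$ directly for every $f = M_{(a,x,y)} \in L_\FF$. By bilinearity in $(a,x,y)$ and in $(X,Y)$, this splits into the three cases $f = M_{(a,0,0)}$, $f = M_{(0,x,0)}$, $f = M_{(0,0,y)}$. In each case one expands both sides in block form $X = (s,u,v)^t$, $Y = (t,x',y')^t$ using the explicit formulas of Proposition~\ref{pr_defdeL} and Lemma~\ref{le_cross}. The identity reduces to manipulations with the 3-dimensional cross product, the skew matrices $l_x$, and the triple product expansion $(x \times y) \times z = (x \cdot z) y - (y \cdot z) x$---exactly the toolkit already deployed in items (i)--(vi) of the proof of Proposition~\ref{pr_defdeL}. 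Tedious but routine.

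For the reverse inclusion, I first show $\Der(V, \times) \subseteq \frso(V, n)$ by recovering $n$ from $\times$ as a trace form. Let $L_u : V \to V$ be $L_u(w) = u \times w$ and set $\beta(u,v) := \tr(L_u L_v)$. Any $f \in \Der(V,\times)$ satisfies $[f, L_u] = L_{f(u)}$, hence
\[ \beta(f(u), v) + \beta(u, f(v)) = \tr\bigl([f, L_u] L_v + L_u [f, L_v]\bigr) = 0, \]
so $f$ preserves $\beta$. A short direct computation on the canonical basis (using $\tr(l_x l_y) = -2\, x \cdot y$ in $\FF^3$) gives $\beta = -6\, n$, whence $f \in \frso(V, n)$.

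Now $L_\FF \subseteq \Der(V, \times) \subseteq \frso(V, n)$, with $\dim L_\FF = 14$ and $\dim \frso(V, n) = 21$. Since the commutator of two derivations is a derivation, $\Der(V, \times)$ is an $L_\FF$-submodule of $\frso(V, n)$ under the adjoint action. I would decompose $\frso(V, n) \cong \Lambda^2 V$ via the weights of $\frh$: its weights are $0$ (multiplicity $3$), $\pm \varepsilon_i$ (multiplicity $2$), and $\pm(\varepsilon_i - \varepsilon_j)$ for $i \ne j$ (multiplicity $1$). Subtracting the weights of $L_\FF$ leaves exactly the weights of the natural module $V \cong V(\omega_1)$, so $\frso(V, n) = L_\FF \oplus M$ as $L_\FF$-modules with $M$ irreducible. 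Consequently, any $L_\FF$-submodule strictly containing $L_\FF$ must meet $M$ nontrivially, hence contain all of $M$, hence equal $\frso(V, n)$. To rule out $\Der(V, \times) = \frso(V, n)$, take $f = \diag(0, I_3, -I_3) \in \frso(V, n) \setminus L_\FF$: for $X = (0, e_1, 0)^t$, $Y = (0, e_2, 0)^t$ one computes $X \times Y = (0, 0, 2 e_3)^t$, so $f(X \times Y) = -(X \times Y)$ while $f(X) \times Y + X \times f(Y) = 2(X \times Y)$. Thus $\Der(V, \times) = L_\FF$.

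The main obstacle is the weight-counting step identifying $\frso(V, n)/L_\FF$ with the natural module (equivalently, the maximality of $G_2$ in $B_3$). A more elementary alternative, better suited to the paper's linear-algebra spirit, is to parametrize a generic element of $\frso(V, n)$ by 21 scalars and impose the derivation identity on a small strategic family of pairs $(X, Y)$, forcing the solution space down to dimension $14$ by direct computation.
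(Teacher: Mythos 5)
Your forward inclusion is the same as the paper's. For the reverse inclusion you take a genuinely different route. The paper exploits the $\ZZ_3$-grading on $(V,\times)$ (with $V_{\bar 0}=\FF E_0$, $V_{\bar 1}=\cU$, $V_{\bar 2}=\cU^*$), deduces that $\Der(V,\times)$ is $\ZZ_3$-graded, and then pins down each homogeneous piece by a short linear-algebra calculation using the fact that $V_{\bar 1}$ generates $(V,\times)$; this gives $\dim\cD_{\bar 0}=8$, $\dim\cD_{\bar 1}=\dim\cD_{\bar 2}=3$ directly, and works uniformly over $\RR$ and $\CC$ with no appeal to representation theory. You instead first observe the elegant fact that the trace form $\beta(u,v)=\tr(L_uL_v)$ is a $\Der(V,\times)$-invariant and equals $-6n$ (your computation of $\beta=-6n$ checks out, as does the invariance via $[f,L_u]=L_{f(u)}$ and trace cyclicity), so $\Der(V,\times)\subseteq\frso(V,n)$; then you decompose $\frso(V,n)\cong\wedge^2 V=L_\FF\oplus M$ as $L_\FF$-modules by weight-counting, concluding $M\cong V(\omega_1)$ is irreducible, so any intermediate $L_\FF$-submodule is $L_\FF$ or $\frso(V,n)$; finally you rule out the latter with $f=\diag(0,I_3,-I_3)$ (this counterexample is correct: $f\in\frso(V,n)\setminus L_\FF$ and $f(X\times Y)=-(X\times Y)\ne 2(X\times Y)$). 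Your route is more conceptual and yields $\Der(V,\times)\subseteq\frso(V,n)$ as a clean byproduct (the paper derives $\Der(\cC)\subseteq\frso(\cC,n)$ only later, in Proposition~\ref{pr_preservacross}); the price is reliance on complete reducibility and on identifying the complement $M$ with $V(\omega_1)$, which over $\RR$ requires one extra word (absolute irreducibility, or pass to the complexification), whereas the paper's grading argument stays entirely in elementary linear algebra. You correctly flag this as the one nontrivial step, and your suggested elementary fallback is in spirit exactly what the paper does, organized by the $\ZZ_3$-grading to keep the equation count manageable.
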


\begin{proof}
Denote by $\cD=\Der(V,\times)=\{f\in\frgl_7(\FF):f(X\times Y)=f(X)\times Y+X\times f(Y)\ \forall X,Y\in V\}$.
Let us prove first that if $f\in L_\FF$, then $f$ behaves well with respect the cross product, that is, $L_\FF\subset\cD$.

$\bullet$ For $f=M_{(a,0,0)}$, $X=\left(\begin{array}{c}
s\\u\\v
\end{array}\right)$ and $Y=
  \left(\begin{array}{c}
t\\x\\y
\end{array}\right)$,
$$
f(X)\times Y+X\times f(Y)=\left(\begin{array}{c}
0\\
-tau+2(a^tv)\times y + sax+2v\times 
(a^ty)\\
2(au)\times x-ta^tv+sa^ty+2u\times (ax)
\end{array}\right),
$$
$$
f(X\times Y)=f\left(\begin{array}{c}
*\\sx-tu-2v\times y\\-sy+tv+2u\times x
\end{array}\right)=\left(\begin{array}{c}
0\\a(sx-tu-2v\times y)\\a^t(sy-tv-2u\times x)
\end{array}\right),
$$
which coincide because for any $a=(a_{ij})\in\frsl_3(\FF)$, $au\times x+u\times ax=-a^t(u\times x)$ for all $u,x\in\FF^3$.
For instance, for elements in the canonical basis $u=e_i$, $x=e_{j}$, ($j=i+1$, $k=i+2$), $u\times x=e_k$ and $au\times x+u\times ax=-a_{ki}e_i-a_{kj}e_j+(a_{ii}+a_{jj})e_k=-a_{ki}e_i-a_{kj}e_j-a_{kk}e_k=-a^te_k$. (That is, $\cU\times\cU\to\cU^*$, $(u,v)\mapsto\det(u,v,-)$, is $\frsl_3(\FF)$-invariant.
At the group level, the matrices of determinant 1 preserve the volume form, since $\det(Pu,Pv,Pw)=\det P\,\det(u,v,w)$.)

$\bullet$  For $f=M_{(0,z,0)}$, $$
f(X)\times Y+X\times f(Y)=\left(\begin{array}{c}
2sz^ty-2(z\times u)^tx+2u^t(z\times x)-2tv^tz\\
-2(z^tv)x-2(z\times u)\times y+2(z^ty)u-2v\times(z\times x)\\
sz\times x-tz\times u+2(z^tv)y-2(z^ty)v
\end{array}\right),
$$
$$
f(X\times Y)=\left(\begin{array}{c}
-2z^t(-sy+tv+2u\times x)\\
2(u^ty-v^tx)z\\
z\times (sx-tu-2v\times y)
\end{array}\right).
$$
The scalars in the $(1)$-position coincide because $z^t(u\times x)=\det(z,u,x)$ is alternating.
The vectors in positions $(2)$ and $(3)$ coincide due to the identity of the double cross product 
$z\times (v\times y)=(z\cdot y)v-(z\cdot v)y.$

$\bullet$  The same arguments work to prove that $f=M_{(0,0,z)}\in\cD$.
\smallskip

Conversely, let us prove $\cD\subset L_\FF$. Recall that $\cD=\Der(V,\times)$  is the Lie algebra of the derivations of $V$ with respect to the product given by $\times$. Although with this product, $V$ is neither associative nor commutative, we can obtain a lot of information looking at $V$   as an algebra. First, $(V,\times)$ possesses a $\ZZ_3$-grading:
$$
V_{\bar0}=\left\{\left(\begin{array}{c}
s\\0\\0
\end{array}\right):s\in\FF\right\},
V_{\bar1}=\left\{\left(\begin{array}{c}
0\\u\\0
\end{array}\right):u\in\FF^3\right\}
,V_{\bar2}=\left\{\left(\begin{array}{c}
0\\0\\v
\end{array}\right):v\in\FF^3\right\}.
$$ 
This means that $V_{\bar i}\times V_{\bar j}\subset V_{\bar i+\bar j}$ (the sum of the indices in $\ZZ_3$), what is an straightforward computation.
    
Hence the Lie algebra $\frgl(V)$ is also $\ZZ_3$-graded ($\frgl(V)=\oplus_{\bar i\in\ZZ_3}\frgl(V)_{\bar i}$, where $f\in\frgl(V)_{\bar i}$ when $f(V_{{\bar j}})\subset V_{\bar i+\bar j}$ for all $\bar j\in\ZZ_3$) and $\cD=\Der(V,\times)$ is also  $\ZZ_3$-graded
($\cD=\oplus_{\bar i\in\ZZ_3}\cD_{\bar i}$, for $\cD_{\bar i}=\cD\cap\frgl(V)_{\bar i}$). This helps to the proof since we have only to check that $\cD_{\bar i}\subset L_{\FF}\cap\frgl(V)_{\bar i}$ (in fact, they coincide), even 
it suffices to prove that $\dim\cD_{\bar 0}=8$ and $\dim\cD_{\bar 1}=\dim\cD_{\bar 2}=3$.

Besides, take into account that the subspace $V_{\bar 1}$ generates $V$ as an algebra (since $V_{\bar 1}\times V_{\bar 1}=V_{\bar2}$ and $V_{\bar 1}\times V_{\bar 2}=V_{\bar 0}$  -not only contained-), what implies that if $d,d'\in\cD$ satisfy $d\vert_{V_{\bar 1}}=d'\vert_{V_{\bar 1}}$, then $d=d'$.

$\bullet$ Let $d\in\cD_{\bar 1}$. There is $z\in\cU$ such that $d\left(\begin{array}{c}
1\\0\\0
\end{array}\right)=\left(\begin{array}{c}
0\\z\\0
\end{array}\right)$ (since this element belongs to $V_{\bar 1} $). So, let us check that $d=M_{(0,z,0)}$, or equivalently 
$d(X)=M_{(0,z,0)}(X)$ for all $X\in V_{\bar 1}$. As $d(X)\in V_{\bar 2}$, there are $u,w\in\cU$ such that $X= \left(\begin{array}{c}
0\\u\\0
\end{array}\right)$ and $d(X)=\left(\begin{array}{c}
0\\0\\w
\end{array}\right)$.
Hence
$$
\left(\begin{array}{c}
0\\0\\w
\end{array}\right)=d\left( \left(\begin{array}{c}
1\\0\\0
\end{array}\right)\times X\right)=\left(\begin{array}{c}
0\\z\\0
\end{array}\right)\times \left(\begin{array}{c}
0\\u\\0
\end{array}\right)+\left(\begin{array}{c}
1\\0\\0
\end{array}\right)\times \left(\begin{array}{c}
0\\0\\w
\end{array}\right),
$$
so that $w=2z\times  u-w$ and $d(X)=\left(\begin{array}{c}
0\\0\\z\times  u
\end{array}\right)=M_{(0,z,0)}(X)$.

$\bullet$ Let $d\in\cD_{\bar 0}$. Then there are $s\in\FF$, $a,b\in\frgl_3(\FF)$ such that
$$
d\left(\begin{array}{c}
1\\0\\0
\end{array}\right)=\left(\begin{array}{c}
s\\0\\0
\end{array}\right),\quad d\left(\begin{array}{c}
0\\u\\0
\end{array}\right)=\left(\begin{array}{c}
0\\au\\0
\end{array}\right),\quad d\left(\begin{array}{c}
0\\0\\w
\end{array}\right)=\left(\begin{array}{c}
0\\0\\bw
\end{array}\right).
$$
Let us check that $a$ is a zero trace matrix and  $d=M_{(a,0,0)}$. First, note that $s=0$, since 
$$
 \left(\begin{array}{c}
0\\au\\0
\end{array}\right)=d\left(\begin{array}{c}
0\\u\\0
\end{array}\right)=d\left(\left(\begin{array}{c}
1\\0\\0
\end{array}\right)\times\left(\begin{array}{c}
0\\u\\0
\end{array}\right)\right)=\left(\begin{array}{c}
0\\su+au\\0
\end{array}\right).
$$
Second, $b=-a^t$, since
$$
0=d\left(\begin{array}{c}
2\\0\\0
\end{array}\right)=d\left(\left(\begin{array}{c}
0\\e_i\\0
\end{array}\right)\times\left(\begin{array}{c}
0\\0\\e_j
\end{array}\right)\right)=2\left(\begin{array}{c}
(ae_i)\cdot e_j+e_i\cdot(be_j)\\0\\0
\end{array}\right),
$$
and $(ae_i)\cdot e_j+e_i\cdot(be_j)=a_{ji}+b_{ij}$. Third, 
$$
\left(\begin{array}{c}
0\\0\\2be_3
\end{array}\right)=d\left(\begin{array}{c}
0\\0\\2e_3
\end{array}\right)=d\left(\left(\begin{array}{c}
0\\e_1\\0
\end{array}\right)\times  \left(\begin{array}{c}
0\\e_2\\0
\end{array}\right)\right)=\left(\begin{array}{c}
0\\0\\2(ae_1\times  e_2+e_1\times  ae_2)
\end{array}\right).
$$
Thus $-\sum a_{3i}e_i=-a^te_3=ae_1\times  e_2+e_1\times  ae_2$ and the coefficient of $e_3$ in both sides of this identity is $-a_{33}=a_{11}+a_{22}$, giving zero trace.
\end{proof}

\subsection{$G_2$ and the octonions}\label{se:octo}

Define $\cC=\FF\oplus V$ (isomorphic to $\FF^8$ as a vector space) with the product where $1\in\FF$ is a unit
and 
\begin{equation}\label{eq_procto}
XY:=-n(X,Y)1+X\times Y,
\end{equation}
for all $X,Y\in V$.
We extend our norm $n\colon V\to \FF$ to the norm $n\colon \cC\to \FF$ defined by $n(1)=1$, $n(1,V)=0$ and $n\vert_V=n$. We use the same symbol for the norm and for its polar form $n(X,Y)=\frac12(n(X+Y)-n(X)-n(Y))$.

\begin{df} An \emph{octonion algebra} is
\begin{itemize}
\item A unital algebra of dimension 8;
\item With a nondegenerate \emph{multiplicative} quadratic form $n$, that is, $n(XY)=n(X)n(Y)$ for all $X,Y$.
\end{itemize}
\end{df}

\begin{lemma} \label{le_octon}
$\cC$ is   an {octonion algebra}.
\end{lemma}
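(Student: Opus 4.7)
The plan is to verify the two bullet points of the definition of octonion algebra: dimension $8$ with unit, and multiplicativity of a nondegenerate norm. The first bullet is immediate from the construction: $\cC=\FF\oplus V$ has dimension $1+7=8$, and $1\in\FF$ is a unit by fiat (since for $X\in V$, $1\cdot X=X\cdot 1=-n(1,X)\,1+1\times X=X$ after noting that $1\times X$ is defined to be $X$ in the unital extension, or equivalently by declaring the product via the formula only for $X,Y\in V$ and extending by unitality). Nondegeneracy of the extended norm $n$ on $\cC$ follows from the decomposition $\cC=\FF 1\oplus V$ being orthogonal together with $n(1)=1\neq 0$ and the nondegeneracy of $n|_V$.

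The real content is multiplicativity: $n(XY)=n(X)n(Y)$ for all $X,Y\in\cC$. The plan is to write $X=\alpha 1+x$ and $Y=\beta 1+y$ with $\alpha,\beta\in\FF$ and $x,y\in V$, compute the product
\[
XY=\alpha\beta-n(x,y)+\bigl(\alpha y+\beta x+x\times y\bigr)\in\FF\oplus V,
\]
and expand both sides. Using that $1$ is orthogonal to $V$, one gets
\[
n(XY)=\bigl(\alpha\beta-n(x,y)\bigr)^{2}+n\bigl(\alpha y+\beta x+x\times y\bigr),
\]
and polarizing the norm on $V$ turns the second summand into
\[
\alpha^{2}n(y)+\beta^{2}n(x)+n(x\times y)+2\alpha\beta\,n(x,y)+2\alpha\,n(y,x\times y)+2\beta\,n(x,x\times y).
\]
The two mixed terms $n(y,x\times y)$ and $n(x,x\times y)$ vanish by the first bullet of Definition~\ref{def_crossproduct} (the orthogonality axiom of the cross product), and the cross term $-2\alpha\beta\,n(x,y)$ from the square cancels against $+2\alpha\beta\,n(x,y)$. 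So
\[
n(XY)=\alpha^{2}\beta^{2}+\alpha^{2}n(y)+\beta^{2}n(x)+n(x,y)^{2}+n(x\times y).
\]

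Comparing with $n(X)n(Y)=(\alpha^{2}+n(x))(\beta^{2}+n(y))=\alpha^{2}\beta^{2}+\alpha^{2}n(y)+\beta^{2}n(x)+n(x)n(y)$, multiplicativity reduces to the identity
\[
n(x,y)^{2}+n(x\times y)=n(x)n(y),
\]
which is exactly the second bullet of Definition~\ref{def_crossproduct} rewritten, since the $2\times 2$ Gram determinant equals $n(x)n(y)-n(x,y)^{2}$. Thus multiplicativity is a direct consequence of the two cross-product axioms already verified in Lemma~\ref{le_cross}. There is no genuine obstacle here; the only point requiring care is to keep track of the extended norm (with $n(1)=1$, $n(1,V)=0$) and to invoke the orthogonality of $x\times y$ to both $x$ and $y$ before matching terms.
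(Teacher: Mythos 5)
Your proof is correct and uses the same underlying idea as the paper: reduce $n(XY)=n(X)n(Y)$ to the two axioms of a cross product, which were already verified in Lemma~\ref{le_cross}. You are, however, a bit more careful than the paper's own proof. The paper checks only the cases where $X$ or $Y$ lies in $\FF 1$ and the case $X,Y\in V$; since $n(XY)-n(X)n(Y)$ is quadratic in $X$ and quadratic in $Y$, these pure cases do not by themselves force the mixed monomials (those of the shape $\alpha x\otimes \beta y$, $\alpha x\otimes y^{2}$, $x^{2}\otimes \beta y$) to vanish. Your expansion with $X=\alpha 1+x$, $Y=\beta 1+y$ handles exactly these cross-terms: the $\alpha\beta\,n(x,y)$ contributions cancel between the scalar square and the polarized vector part, and the terms $n(x,x\times y)$, $n(y,x\times y)$ die by the orthogonality axiom. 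So the content is the same, but your version closes a small gap in the paper's write-up.
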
 

\begin{proof}
We have only to prove that $n(XY)=n(X)n(Y)$.
If either $X$ or $Y$ is equal to $1$, it is clear. And if $X$ and $Y$ are in $V$, 
$$
\begin{array}{ll}
n(XY)&=n(-n(X,Y)1+X\times Y,-n(X,Y)1+X\times Y)\\
&=(-n(X,Y))^2n(1,1)+0+n(X\times Y)\\
&=n(X,Y)^2+\left|\begin{array}{cc}n(X ,X)&n(X ,Y)\\n(Y ,X)&n(Y ,Y)\end{array}\right|\\
&=n(X,X)n(Y,Y).
\end{array}
$$
\end{proof}

\begin{remark}
The relationship between cross products and multiplicative norms does not exist only in the octonion case. In fact, a  \emph{composition} algebra is an algebra with a nondegenerate multiplicative quadratic form. According to the classical Hurwitz's Theorem in 1898, there are unital finite-dimensional composition real\footnote{The theory   has subsequently been generalized to arbitrary quadratic forms and arbitrary fields, in particular $\CC$.} algebras only for dimensions $1$, $2$, $4$ and $8$. But whenever we have a cross product in an $\RR$-vector space $W$, then $\RR\oplus W$ is a composition algebra by the same argument as in the above proof. This is the way used by Brown and Gray \cite{Gray} to prove that cross products are only possible for   dimensions $1$, $3$ and $7$.
\end{remark}

\begin{remark}\label{re_cuadratica}
Note that  $\cC$ is a \emph{quadratic} algebra, that is, every element $X\in\cC$ satisfies a quadratic equation with coefficients in $\FF$:
\begin{equation}\label{eq_cuad}
X^2-2n(X,1)X+n(X)1=0.
\end{equation}
This is a straightforward computation: if $Y=s +X$ with $X\in V$, $s\in\FF$, then $Y^2=s^2 +2sX-n(X) $, $n(Y)=s^2+n(X)$ and $n(Y,1)=s$.

Now it is usual to denote 
$$\textrm{tr}(X):=2n(X,1)\in\FF,$$
 the \emph{trace}, so that $V$ can be identified with the zero trace elements in $\cC$, i.e. those ones orthogonal to $1$ (sometimes called \emph{imaginary} octonions by analogy with the  imaginary complex numbers).
 
 It will be useful to observe that the coefficients in the quadratic equation \eqref{eq_cuad} are determined, i.e. if $X\notin\FF1$ and $X^2-sX+t1=0$ then $s=\textrm{tr}(X)$ and $t=n(X)$.
\end{remark}

\begin{proposition}\label{pr_preservacross}
$L_\FF$ is isomorphic to the Lie algebra of derivations of the octonions $\Der(\cC)=\{d\in\frgl(\cC):d(XY)=d(X)Y+Xd(Y)\}$, by means of 
$$d\in L_\FF=\Der(V,\times)\mapsto \tilde d\in\Der(\cC),\quad\left\{\begin{array}{l}\tilde d(1)=0,\\\tilde d\vert_V=d.\end{array}\right.
$$
Furthermore,   $\Der(\cC)\subset\frso(\cC,n)$.
\end{proposition}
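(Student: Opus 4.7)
The plan is to verify that the assignment $d \mapsto \tilde d$ given by $\tilde d(1) := 0$ and $\tilde d|_V := d$ defines an injective Lie algebra map $L_\FF \to \Der(\cC)$, and then to prove surjectivity by arguing that every $D \in \Der(\cC)$ automatically satisfies $D(1) = 0$, preserves $V$, and restricts to an element of $\Der(V, \times) = L_\FF$. Well-definedness of $\tilde d$ reduces to checking the derivation identity on products $XY$ with $X, Y \in V$ (the cases involving the unit $1$ are automatic from $\tilde d(1) = 0$). By \eqref{eq_procto}, $\tilde d(XY) = d(X \times Y)$, while
$$\tilde d(X) Y + X \tilde d(Y) = -\bigl(n(d(X), Y) + n(X, d(Y))\bigr) 1 + d(X) \times Y + X \times d(Y).$$
The vector parts coincide because $d \in \Der(V, \times)$, and the scalar parts coincide because $d \in \frso(V, n)$ by Lemma~\ref{le_antisim}. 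Injectivity is clear since $\tilde d|_V = d$.

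For surjectivity, let $D \in \Der(\cC)$. The identity $D(1 \cdot 1) = 2 D(1)$ forces $D(1) = 0$. To see $D(V) \subset V$, I would exploit Remark~\ref{re_cuadratica}: for $X \in V$, the quadratic relation $X^2 = -n(X) \cdot 1$ gives $D(X) X + X D(X) = 0$. Polarizing that quadratic relation yields the linearized identity $YX + XY = \mathrm{tr}(X) Y + \mathrm{tr}(Y) X - 2 n(X, Y) \cdot 1$ valid on $\cC$; substituting $Y = D(X)$ with $X \in V$ (so $\mathrm{tr}(X) = 0$) produces $\mathrm{tr}(D(X)) X = 2 n(X, D(X)) \cdot 1$. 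For $0 \neq X \in V$, the $V$- and $\FF \cdot 1$-components must vanish separately, giving both $\mathrm{tr}(D(X)) = 0$ (equivalently $D(X) \in V$) and $n(X, D(X)) = 0$. Then applying the derivation identity to $XY = -n(X, Y) 1 + X \times Y$ with $X, Y \in V$ and splitting scalar and vector parts simultaneously produces $n(D(X), Y) + n(X, D(Y)) = 0$ and $D(X \times Y) = D(X) \times Y + X \times D(Y)$, so $D|_V \in \Der(V, \times) = L_\FF$ and $D = \widetilde{D|_V}$.

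Finally, for $\Der(\cC) \subset \frso(\cC, n)$, the identity $n(X, D(X)) = 0$ for $X \in V$ is already at hand; for any $Z = s \cdot 1 + X \in \cC$ with $X \in V$, one has $D(Z) = D(X) \in V$, so
$$n(Z, D(Z)) = s \, n(1, D(X)) + n(X, D(X)) = 0$$
because $V \perp 1$, and polarization concludes. The hard step is the quadratic-equation argument in the second paragraph: once $D(V) \subset V$ is pinned down, everything else is a routine separation of $\FF \cdot 1$- and $V$-components via the defining formula \eqref{eq_procto} for the product of $\cC$.
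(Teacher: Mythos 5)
Your proof is correct and takes essentially the same route as the paper. The one minor stylistic difference is in showing $D(V)\subset V$: you polarize the quadratic relation of Remark~\ref{re_cuadratica} to get the identity $XY+YX=\tr(X)Y+\tr(Y)X-2n(X,Y)1$ and substitute $Y=D(X)$ with $X\in V$, while the paper reaches the same conclusion by writing $D(X^2)=(X+D(X))^2-D(X)^2-X^2$ and applying the quadratic relation to each square; these are literally the same computation in different notation. Your streamlined handling of $\Der(\cC)\subset\frso(\cC,n)$ via $n(Z,D(Z))=0$ plus polarization is also equivalent to the paper's case split.
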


\begin{proof}
Let $d\in L_\FF$, $X,Y\in V$. So, $\tilde d(XY)=\tilde d(X\times Y)=d(X\times Y)=d(X)\times Y+X\times d(Y)$ is equal to
$\tilde d(X)Y+X\tilde d(Y)=-n(d(X),Y)1+d(X)\times Y-n(X,d(Y))1+X\times d(Y)$, since $d\in\frso(V,n)$ by Lemma~\ref{eq_lan}. Also $\tilde d(XY)=\tilde d(X)Y+X  \tilde  d(Y)    $ is obviously true if either $X$ or $Y$ is multiple of $1$, so that  $\tilde d\in\Der(\cC)$. 

Conversely, take $d\in\Der(\cC)$. Note first that $d(1)=2d(1)$, so that $d(1)=0$. 
Second, we are checking that $\Der(\cC)$ restricts to $V$. 
We apply $d\in\Der(\cC)$ to Eq.~\eqref{eq_cuad} getting $d(X^2)-\tr (X)d(X)=0 $ for all $X\in\cC$.
Thus
$$
\begin{array}{ll}
\tr (X)d(X)&=d(X^2)=d(X)X+Xd(X)=(X+d(X))^2-d(X)^2-X^2\\
&\stackrel{\eqref{eq_cuad}}{=}\tr(X+d(X))\,(X+d(X))-\tr(d(X))\,d(X)-\tr(X)\,X\\
&\qquad  -n(X+d(X))1+n(d(X))1+n(X)1\\
&=\tr(X)\,d(X)+\tr(d(X))\,X-2n(X,d(X))1,
\end{array}
$$
hence $\tr(d(X))\,X=2n(X,d(X))1$ for all $X\in\cC$. In particular, if $X\in V$, by projecting on $V$, we have  $\tr(d(X))=0$, so that $d(X)\in V$. As $d(1)=0$, then $d(\cC)\subset V$.
Third, and finally, if $X,Y\in V$, $d\in\Der(\cC)$,
$$
\begin{array}{ll}
d(X\times Y)&=d(XY)=d(X)Y+Xd(Y)\\
&=-n(d(X),Y)1+d(X)\times Y-n(X,d(Y))1+X\times d(Y).
\end{array}
$$
Here we have used that the image of $d$ is in $V$ for applying \eqref{eq_procto}. In particular (projecting on $V$), $d(X\times Y)=d(X)\times Y+X\times d(Y)$ and $d\in L_\FF$.
As a bonus  (projecting on $\FF$), we also get $n(d(X),Y)+n(X,d(Y))=0$ if $X,Y\in V$ (also consequence from Lemma~\ref{le_antisim}); but if both $X$ and $Y$ are scalars the statement is clear too; and if $X\in V$, $Y=1$, the equality consists of  $\tr(d(X))=0$. To summarize, $\Der(\cC)\subset\frso(\cC,n)$.
  \end{proof}

\section{Generic 3-forms in $\FF^7$}\label{sec_3formas}

In \cite{EngelG2}, Engel discovered an elegant description of the Lie algebra of type $G_2$ which enclosed the compact form. Before giving   proofs of his results about 3-forms (of course, not Engel's original proofs), we will try to go naturally to this concept from our previous situation. 

\subsection{$G_2$ and the triple product}\label{se:eformas}

Define a triple product in $V=\FF^7$ as follows:
\begin{equation}\label{eq_tripleproduct}
\{\ ,\ ,\ \}\colon V\times V\times V\to\FF,\quad\{X,Y,Z\}:=n(X\times Y,Z).
\end{equation}
It is an alternating (trilinear) map, taking into account that $n(X\times Y,Y)=0$ and $n(X\times Y,X)=0$, as in Lemma~\ref{le_cross}.
So, it can be identified with a 3-form $\wedge^3V\to \FF$.

Note that 
$$
L_\FF\subset\{f\in\frgl_7(\FF):\{f(X),Y,Z\}+\{X,f(Y),Z\}+\{X,Y,f(Z)\}=0
\ \forall X,Y,Z\in V\},
$$
which is a trivial consequence of   Proposition~\ref{pr_conservacross} 
and the fact $f\in L_\FF\subset\frso(V,n)$:
$$
\begin{array}{c}
n(f(X)\times Y,Z)+n(X\times f(Y),Z)+n(X\times Y,f(Z))=\qquad\qquad\qquad\\
\qquad\qquad\qquad\qquad\qquad\qquad= n(f(X\times Y),Z)+n(X\times Y,f(Z))=0.
\end{array}
$$
Our objective is to prove the equality of both algebras.\footnote{ 
This is frequently mentioned in the literature, for instance in  \cite[Example~30]{orbitas}, which just uses the model in \ref{pr_defdeL}.
}
To this aim, it is convenient to understand the 3-form  better:

\begin{remark}\label{re_3formaconcreta}
If $\{V_{\bar i},V_{\bar j},V_{\bar k}\}\ne0\Rightarrow \bar i+\bar j+\bar k=\bar 0.$
Denote by
$$
E_{ 0}= \left(\begin{array}{c}
1\\0\\0
\end{array}\right),\quad
E_{i}= \left(\begin{array}{c}
0\\e_i\\0
\end{array}\right),\quad
F_{i}= \left(\begin{array}{c}
0\\0\\e_i
\end{array}\right),
$$
so that the canonical basis of $V=\FF^7$ is $\cB_c=\{E_0,E_1,E_2,E_3,F_1,F_2,F_3\}$.
According to Lemma~\ref{le_cross}, the only nonzero $\times $-products of basic elements (taking into account skew-symmetry) are:
$$
\begin{array}{lll}
E_0\times E_i=E_i,
\qquad\ &
E_i\times E_{i+1}= 2F_{i+2},\\
E_0\times F_i=-F_i,&
F_i\times F_{i+1}=-2E_{i+2},\\
E_i\times F_{i}= 2E_{0}.&
\end{array}
$$ 
And, according to Eq.~\eqref{eq_lan}, the only nonzero $n$-products of basic elements (but taking into account symmetry) are:
$$
n(E_0, E_0)=-1,\qquad\quad n(E_i, F_i)=-2.
$$
Hence, the only nonzero $\{\,,\,,\,\}$-products of basic elements (taking into account it is alternating) are:
$$
\{E_0,E_i,F_i\}=-2,\quad
\{E_1,E_2,E_3\}=-4,\quad
\{F_1,F_2,F_3\}=4,\quad
$$
or, in other words, the triple product considered as a 3-form is
$$\begin{array}{ll}\{\,,\,,\,\}=&-2\,\big(E_0^*\wedge E_1^*\wedge F_1^*+E_0^*\wedge E_2^*\wedge F_2^*+E_0^*\wedge E_3^*\wedge F_3^*\big)\vspace{3pt}
\\&\qquad-4\,E_1^*\wedge E_2^*\wedge E_3^*+4\,F_1^*\wedge F_2^*\wedge F_3^*.
\end{array}$$
\end{remark}

\begin{proposition}\label{pr_enson}
$L_\FF=\Der(V, \{\,,\,,\,\} )=$
$$\begin{array}{c}
=\{f\in\frgl_7(\FF):\{f(X),Y,Z\}+\{X,f(Y),Z\}+\{X,Y,f(Z)\}=0
\ \forall X,Y,Z\in V\}.
\end{array}
$$
\end{proposition}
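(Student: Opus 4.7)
The inclusion ``$\supseteq$'' is the one needing work; the converse is already in the text. My plan is to recover the norm $n$ from the $3$-form $\phi:=\{\,,\,,\,\}$, show that any $f\in\frgl(V)$ with $f\cdot\phi=0$ lies in $\frso(V,n)$, and then conclude via Proposition~\ref{pr_conservacross}.

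The first step is a form of the Hitchin/Bryant construction. Fix a nonzero volume form $\omega_0\in\wedge^7V^*$ and define a symmetric bilinear map $\beta\colon V\times V\to\wedge^7V^*$ by
$$
\beta(X,Y):=(\iota_X\phi)\wedge(\iota_Y\phi)\wedge\phi.
$$
A direct computation from the explicit expression of $\phi$ in Remark~\ref{re_3formaconcreta} should yield $\beta(X,Y)=c\,n(X,Y)\,\omega_0$ for some nonzero scalar $c\in\FF$. The $\ZZ_3$-grading of $V$ cuts this check down to a short list of basic cases ($\beta(E_0,E_0)$, $\beta(E_i,F_i)$, $\beta(E_i,E_j)$, together with the vanishing in the blocks whose total grading degree is nonzero). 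This is where I expect the real work to lie: the verification is elementary but lengthy.

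With the identity $\beta=c\,n\otimes\omega_0$ in hand, functoriality of contraction together with $f\cdot\phi=0$ gives $f\cdot(\iota_X\phi)=\iota_{fX}\phi$, and consequently
$$
\beta(fX,Y)+\beta(X,fY)\;=\;f\cdot\beta(X,Y)\;=\;-\tr(f)\,\beta(X,Y),
$$
using that $\wedge^7V^*$ is the one-dimensional $\frgl(V)$-module on which $f$ acts by $-\tr(f)$. Substituting $\beta=c\,n\otimes\omega_0$ produces $n(fX,Y)+n(X,fY)=-\tr(f)\,n(X,Y)$. Evaluating at $X=Y=v_i$ for an $n$-orthogonal basis $\{v_i\}$ and summing will force $\tr(f)=-\tfrac{7}{2}\tr(f)$, hence $\tr(f)=0$ and $f\in\frso(V,n)$.

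To finish, for any $X,Y,Z\in V$ the derivation identity for $\phi$ combined with $f\in\frso(V,n)$ gives
$$
n(f(X)\times Y+X\times f(Y),Z)\;=\;-n(X\times Y,fZ)\;=\;n(f(X\times Y),Z),
$$
so the nondegeneracy of $n$ forces $f(X\times Y)=f(X)\times Y+X\times f(Y)$; hence $f\in\Der(V,\times)=L_\FF$ by Proposition~\ref{pr_conservacross}.
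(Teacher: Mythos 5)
Your proposal is correct, and it follows a genuinely different route from the paper's proof. The paper argues hands-on: it writes $f$ as a matrix $A=(a_{ij})$ in the basis $\cB_c$ and, by evaluating the derivation identity on a short list of basic triplets, shows directly that $A$ has the block form $M_{(a,x,y)}$ — actually overshooting its own stated strategy of merely getting $f\in\frso(V,n)$. You instead carry out that stated strategy as written: recover the invariant norm intrinsically from the $3$-form via the Hitchin-type map $\beta(X,Y)=(\iota_X\phi)\wedge(\iota_Y\phi)\wedge\phi$, deduce $n(fX,Y)+n(X,fY)=-\tr(f)\,n(X,Y)$ from $\wedge^7V^*$-equivariance, extract $\tr(f)=0$ (and hence skew-adjointness) by evaluating on an orthogonal basis, and then close with the nondegeneracy argument already sketched in the paper. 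This is precisely the approach the paper itself hints at in the Remark immediately after the proof ("the norm $n$ can be recovered from the triple product, although this fact seems hidden at a first sight"), where the formula for $n$ in terms of $\{\,,\,,\,\}$ is displayed. What the conceptual route buys you is a clean separation of one computation (the identity $\beta=c\,n\otimes\omega_0$) from soft arguments; what the paper's direct computation buys you is that no auxiliary construction is needed, and the $\frso(V,n)$-reduction can be skipped. The one substantive item you leave unfilled is exactly that computation: you assert $\beta=c\,n\otimes\omega_0$ with $c\ne0$ (equivalently, that the $n_{\Omega_0,\cB}$ of Lemma~\ref{le_lanorma} is a nonzero multiple of the $n$ of Eq.~\eqref{eq_lan}) without carrying it out. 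It is not hard — the $\ZZ_3$-grading kills most blocks and the nonzero values are listed in Remark~\ref{re_3formaconcreta} — and the paper's post-proof Remark gives the answer with the correct constant; but a complete write-up should include it, since $c\ne0$ is the linchpin of the trace argument.
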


\begin{proof}
Let $f$ be a map preserving the triple product in Eq.~\eqref{eq_tripleproduct}. Our aim is  to prove that $f\in\frso(V,n)$,
because if we knew  $f\in\frso(V,n)$ then
$$
\begin{array}{rl}
0=& n(f(X\times Y),Z)+n(X\times Y,f(Z))=n(f(X\times Y),Z)+\{X,Y,f(Z)\}\\
 =&n(f(X\times Y),Z)-\{f(X),Y,Z\}-\{X,f(Y),Z\}\\
=&n(f(X\times Y) -f(X)\times Y-X\times f(Y),Z),
\end{array}
$$
and so, as $n$ is nondegenerate, this would imply $f\in\Der(V,\times)=L_\FF$ (Proposition~\ref{pr_conservacross}).

 If $A=(a_{ij})_{i,j=0}^6$ represents the matrix of $f$ relative to the basis $\cB_c$, then $f\in\frso(V,n)$ equivales to $A^tN+NA=0$, that is, to the fact that $A$ has, as a block-matrix, the shape
  \begin{equation}\label{eq_antisim}
  \left(\begin{array}{ccc}0&-2y^t&-2x^t\\x &d&b\\y&c&-d^t\end{array}\right),
  \end{equation}
 for some $x,y\in\FF^3$ and $b,c,d\in\Gl_3(\FF)$ such that  $b+b^t=0=c+c^t$.
According to Remark~\ref{re_3formaconcreta},
\begin{equation}\label{eq_hola}
\begin{array}{rl}
0=& \{f(E_0),E_i,F_i\}+\{E_0,f(E_i),F_i\}+\{E_0,E_i,f(F_i)\}\\
=&\{a_{00}E_0,E_i,F_i\}+\{E_0,a_{ii}E_i,F_i\}+\{E_0,E_i,a_{i+3,i+3}F_i\}\\
=&-2( a_{00}+a_{ii}+a_{i+3,i+3}),
\end{array}
\end{equation}

\noindent and also
$$
\begin{array}{rl}
0=& \{f(E_1),E_2,E_3\}+\{E_1,f(E_2),E_3\}+\{E_1,E_2,f(E_3)\}\\
=&-4( a_{11}+a_{22}+a_{33}),\\
0=& \{f(F_1),F_2,F_3\}+\{F_1,f(F_2),F_3\}+\{F_1,F_2,f(F_3)\}\\
=&4( a_{44}+a_{55}+a_{66}).
\end{array}
$$
If we now   sum Eq.~\eqref{eq_hola} for $i=1,2,3$,  we get  $-6a_{00}-2\sum_{i=1}^6a_{ii}=0$,  that joint with the two above identities imply $a_{00}=0$.

Now we apply
$\{f(X),Y,Z\}+\{X,f(Y),Z\}+\{X,Y,f(Z)\}=0$ to some various triplets $\{X,Y,Z\}$:
\begin{itemize}
\item[i)] $\{X,Y,Z\}=\{E_0,E_{i},F_{i+1}\}$ gives $0=a_{i+1,i}+a_{i',i+1'}$ for $i=1,2,3$; 

\noindent -In order to simplify the notation we have replaced  the indices $j+3$ with $j'$ when $j=1,2,3$ (so, if for instance $i=3$, then $i+1'$ means $4$, first sum modulo 3 and second sum $3$ in $\ZZ$);-

\noindent $\{X,Y,Z\}=\{E_0,E_{i},F_{i+2}\}$ gives $0=a_{i+2,i}+a_{i',i+2'}$,

\noindent  also $a_{ii}+a_{i+3,i+3}=0=a_{00}$ from \eqref{eq_hola} with the new notation is $0=a_{ii}+a_{i',i'}$, that gives $X_{33}=-X_{22}^t$ thinking of $A$ as a block-matrix $A=(X_{ij})_{i,j=1}^3$ of sizes as in \eqref{eq_antisim}.
\item[ii)] $\{X,Y,Z\}=\{F_{i},E_{i+1},E_{i+2}\}$ provides $a_{i,i'}=0$ (zero the whole diagonal of $b=X_{23}$),

\noindent $\{X,Y,Z\}=\{F_{i+1},E_{i+1},E_{i+2}\}$  implies $2a_{i,i+1'}=a_{0,i+2}$,

\noindent $\{X,Y,Z\}=\{F_{i+2},E_{i+1},E_{i+2}\}$   implies $-2a_{i,i+2'}=a_{0,i+1}$, and compiling the information $a_{i,i+1'}=\frac12a_{0,i+1+1'}=-a_{i+1,i'}$ (sum modulo 3) and $b$ is skew-symmetric.
\item[iii)]
$\{X,Y,Z\}=\{E_0,F_{i+1},F_{i+2}\}$ leads to $2a_{i',0}=a_{i+2,i+1'}-a_{i+1,i+2'}$,
so that by ii),  $2a_{i',0}=2a_{i+2,i+1'}=2a_{i+2,i+2+2'}=-a_{0,i+2+1}=-a_{0i}$, which confirms the relationship between the blocks $X_{31}$ and $X_{12}$. 
\end{itemize}
The blocks $(2,1),(3,2), (1,3)$ are obtained interchanging $E$'s and $F$'s in the previous series of triplets $\{X,Y,Z\}$.
(In fact we have directly checked $b=l_y$, $\tr(d)=0$ and $A\in L_\FF$.)
\end{proof}

\begin{remark}
The proof should be essentially based on the fact that the norm $n$ can be recovered from the triple product, although this fact seems hidden at a first sight (that is the reason why preserving the triple product is preserving the cross product, but the first object is   easier). Soon we will try to prove this in general, but now let us check it in our concrete example 
-of triple product as in Eq.~\eqref{eq_tripleproduct} and $n$ given by Eq.~\eqref{eq_lan}-:
If $X=(s\,u\,v)^t$, $n(X)=-s^2-4u^tv=\frac14\left( -\{X,E_1,F_1\}\{X,E_2,F_2\}+
\sum_{i=1}^3\{X,E_{i}, E_{i+1}\}\{X,F_{i},F_{i+1}\}
\right)$ (expression too reliant on our basis), which, multiplying with $\{E_{0},E_{i+2},F_{i+2}\}=-2$ and varying the possibilities, yields
$$
n(X)=\frac{-1}{2^63^2}\sum_{ \sigma\in S_7  }
(-1)^{\textrm{sg}\sigma} \{X,e_{\sigma({1})},e_{\sigma({2})}\}\{X,e_{\sigma({3})},e_{\sigma({4})}\}
 \{e_{\sigma({5})},e_{\sigma({6})},e_{\sigma({7})}\}.
$$
(If we take the sum over the permutations $\sigma\in S_7$ with
$\sigma(1)<\sigma(2)$,
$\sigma(3)<\sigma(4)$ and $\sigma(5)<\sigma(6) <\sigma(7)$, then the scalar is $\frac{-1}{2^33}$, since  $-8$ in our first expression appears $6$ times.)
This motivates Lemma~\ref{le_lanorma}.
\end{remark}   

Until now   we have worked at the level of the Lie algebra since, although it may seem less conceptual than working with the group,   the involved  equations are linear!
Anyway   the previous results can be read in terms of groups.

\begin{corollary}\label{co_conexossplit}
The group generated by $\exp(\frg_{2})\subset\GL_7(\CC)$ (resp. $\exp(\frg_{2,2})\subset\GL_7(\RR)$) is a complex simple Lie group of type $G_2$ (resp. real noncompact) contained in $\SO(V,n)$, which coincides, for $\FF=\CC$ (resp. $\RR$), with

\begin{equation}\label{eq_descrp1}
\Aut(V,\times)=\{f\in\GL_7(\FF): f(X)\times f(Y)=f(X\times Y) \  \forall X,Y\in V\}, 
\end{equation}
also with the connected component of the identity of  the   group
\begin{equation}\label{eq_preserva3forma}
\{f\in\GL_7(\FF):\{f(X),f(Y),f(Z)\}=\{X,Y,Z\}\  \forall X,Y,Z\in V\},
\end{equation}
and it is isomorphic -under the obvious extension to $\cC=\FF\oplus V$,  making $f(1)=1$- to
\begin{equation}\label{eq_descrp2}
\Aut(\cC)=\{f\in\GL(\cC): f(X  Y) =f(X) f(Y)\  \forall X,Y\in \cC\}.
\end{equation}
\end{corollary}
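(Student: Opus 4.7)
The plan relies on the Lie group--Lie algebra correspondence combined with the three descriptions of $L_\FF$ already obtained: as $\Der(V,\times)$ in Proposition~\ref{pr_conservacross}, as the infinitesimal stabilizer of $\{\,,\,,\,\}$ in Proposition~\ref{pr_enson}, and as $\Der(\cC)$ in Proposition~\ref{pr_preservacross}.

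First I would set $G:=\langle\exp(L_\FF)\rangle\subset\GL(V)$, which is connected by construction. Since $L_\FF\subset\frso(V,n)$ by Lemma~\ref{le_antisim} and $\exp(\frso(V,n))\subset\SO(V,n)$, one gets $G\subset\SO(V,n)$. Because $L_\FF$ is a simple Lie algebra of type $G_2$ (complex for $\FF=\CC$, real split with Killing form of signature $2$ for $\FF=\RR$, as recalled in Section~\ref{se_reales}), $G$ is a connected simple Lie group of type $G_2$ of the corresponding kind.

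Next, each of the three groups in the statement---$\Aut(V,\times)$, the $3$-form stabilizer \eqref{eq_preserva3forma}, and $\Aut(\cC)$---is a closed algebraic subgroup of $\GL(V)$ or $\GL(\cC)$, and differentiating its defining equations at the identity returns $L_\FF$ as its Lie algebra, by the three propositions cited. Uniqueness of the connected Lie subgroup with a prescribed Lie algebra inside $\GL(V)$ (resp.\ $\GL(\cC)$) then identifies $G$ with the identity component of each of them, which already yields the assertion about the $3$-form stabilizer phrased in terms of the identity component.

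The main obstacle is promoting this to the on-the-nose equalities $G=\Aut(V,\times)$ and (via $f\mapsto\tilde f$) $G\cong\Aut(\cC)$, i.e.\ showing $\Aut(V,\times)$ and $\Aut(\cC)$ are themselves connected. The cleanest route is to first establish the bijection $\Aut(V,\times)\leftrightarrow\Aut(\cC)$: by Remark~\ref{re_cuadratica} every $f\in\Aut(\cC)$ fixes $1$ and preserves both the trace and the norm, so restricts to an element of $\Aut(V,\times)\cap\SO(V,n)$; conversely any $f\in\Aut(V,\times)$ preserves $n$ because the Gram-type identity in Definition~\ref{def_crossproduct} determines $n$ from $\times$ (the scaling $n\mapsto\lambda n$ rescales the left-hand side by $\lambda$ and the right-hand side by $\lambda^2$, forcing $\lambda=1$), and then extends to an automorphism of $\cC$ through $\tilde f(1)=1$ via the formula $XY=-n(X,Y)1+X\times Y$. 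Connectedness of $\Aut(\cC)$ finally follows from the triviality of the outer automorphism group of a simple Lie algebra of type $G_2$ (the Dynkin diagram has no symmetries): any $f\in\Aut(\cC)$ conjugates $G$ into itself and so induces an inner automorphism of $L_\FF$, so after multiplying by a suitable element of $G$ we may assume $f$ centralizes $G$; but the irreducibility of the $G$-action on $V$ (Schur) together with $f(1)=1$ and $f\in\Aut(\cC)$ then forces $f=\mathrm{id}$.
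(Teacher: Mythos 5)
Your ``easy half''---that $G=\langle\exp L_\FF\rangle$ is connected, lies in $\SO(V,n)$ by Lemma~\ref{le_antisim}, and coincides with the identity component of each of the three stabilizer groups because their Lie algebras all equal $L_\FF$ by Propositions~\ref{pr_conservacross}, \ref{pr_enson} and \ref{pr_preservacross}---is exactly the paper's reduction, and it already settles the assertion about~\eqref{eq_preserva3forma}. Your identification $\Aut(V,\times)\leftrightarrow\Aut(\cC)$ is Proposition~\ref{pr_autmalcevyc}, and your scaling argument for $\Aut(V,\times)\subset\mathrm{O}(V,n)$ is a legitimate alternative to the paper's use of Eq.~\eqref{eq_mainidentity}.

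The gap is in the connectedness step over $\FF=\RR$. You deduce that every automorphism of $L_\FF$ is inner from ``the Dynkin diagram has no symmetries.'' That reasoning is correct for the \emph{complex} algebra $\frg_2$, where $\Aut/\Int$ embeds into the Dynkin diagram automorphisms; and indeed for $\FF=\CC$ your argument reproduces the normalizer-plus-Schur sketch that the paper attributes to S.~Garibaldi in the remark right after this corollary. But for the real split form $\frg_{2,2}$ the identity $\Aut(\frg_{2,2})=\Int(\frg_{2,2})$ does \emph{not} follow from the absence of Dynkin symmetries alone: the component group of $\Aut$ of a real form also depends on whether, say, the Cartan involution is realized by an inner automorphism, and real forms of complex algebras with trivial outer automorphism group can still have disconnected automorphism groups. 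The claim happens to be true for $\frg_{2,2}$, but it needs a separate justification you have not given. The paper avoids this entirely by deferring connectedness to Section~\ref{se_conexo}: it exhibits the transitive actions $G_{\Omega_0}/\SL_3(\RR)\cong H_3^6(1)$ (Proposition~\ref{le_comoelsigma}, with $H_3^6(1)$ diffeomorphic to $\mathbb S^3\times\RR^3$ hence connected) and its complex analogue (Lemma~\ref{pr_concomplejo}), and then invokes the standard fact that $G$ is connected whenever $H$ and $G/H$ are. That route needs no outer-automorphism input and treats $\RR$ and $\CC$ uniformly. If you wish to keep your argument, you must separately prove $\Aut(\frg_{2,2})=\Int(\frg_{2,2})$, e.g.\ by exhibiting the Cartan involution as $\Ad(g)$ for some $g\in G$.
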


This result is not completely trivial. The groups in \eqref{eq_descrp1} and \eqref{eq_descrp2} are Lie groups, subgroups of $\GL_7(\FF)$  \cite[Theorem~3.54]{Warner} and also is the group in \eqref{eq_preserva3forma}, a closed subgroup of $\GL_7(\FF)$. As we have proved that all of them have the same Lie algebra, then it is true that the connected components of the identity coincide 
 \cite[Theorem~3.19]{Warner}. So the corollary will be clear when we see that the groups in \eqref{eq_descrp1} and \eqref{eq_descrp2} are connected. But proving the connectedness is not usually an easy task. We will check it carefully in Section~\ref{se_conexo}, obtaining it as a consequence of the diffeomorphism 
$ G_{2}/\mathrm{SL}_3(\RR)\cong H_3^6(1)$ (and of its complex version). The study of this homogeneous space and of the 6-dimensional sphere
 $\mathbb S^6\cong G_2/{\mathrm{SU}}(3)$   is  one of our purposes in this 
 paper. In the meantime, observe that the group in \eqref{eq_preserva3forma} has nontrivial center in the complex case because $\omega\id_V$ belongs to the  group if $\omega \in\CC$ is a   cubic root of the unit.\footnote{There are connected Lie groups with nontrivial centre, for instance $\widetilde G_{2,2}$ in Remark~\ref{re_elsc}. 
 But the complex group \eqref{eq_preserva3forma} is not that case,  its discrete center giving the number of connected components.
 }



\begin{remark}
S.~Garibaldi suggested to me some alternative arguments. For instance, Corollary~\ref{eq_descrp1} is immediate using this result: a closed subgroup of $\mathrm{SL}_7(\CC)$ (hence a Lie group) with identity connected component equal to $G_2$ should be contained in   the subgroup of $\mathrm{SL}_7(\CC)$ generated by $ G_2$ and the scalar
matrices. This is the case for $H$ any of the groups in \eqref{eq_descrp1}, \eqref{eq_preserva3forma} and \eqref{eq_descrp2}, 
%
whose identity connected component   coincides with $G_2$ by the above sections, or independently by using the argument in \cite[Lemma~5.3]{garibaldi} that allows to conclude that any -Zariski- closed connected subgroup lying properly between $G_2$ and $\mathrm{SL}_7(\CC)$  is simple and then, by checking the possibilities with root data, necessarily $H$ should be the orthogonal group $\SO(V,n)$, for $n$ the $G_2$-invariant quadratic form on $V$. But, of course,  $\SO(V,n)$   stabilizes neither $\times$, nor $\{\,,\,,\,\}$, nor the octonionic product.

In order to prove the above result, note that
if $g$ is in the normalizer of
$G_2$ in $\mathrm{SL}(V)$, then $\mathrm{Ad}\,g\in\Aut(\mathrm{SL}(V))$ given by conjugation restricts to $G_2$, so 
$\mathrm{Ad}\,g\vert_{G_2}\in \Aut(G_2)$. A well known fact is that every automorphism of $G_2$ is inner, so that there is $g'\in G_2$  such that $\mathrm{Ad}\,g\vert_{G_2}=\mathrm{Ad}\,g'\vert_{G_2}$. Hence
$ g(g')^{-1}$ commutes with $G_2$ and hence it belongs to $\mathrm{End}_{G_2}(V)=\CC\id_V$ by Schur's lemma ($V$ is $G_2$-irreducible). Thus $g$ is the product of an element in $G_2$ by a scalar map. But any   subgroup of $\mathrm{SL}(V)$ with identity connected component equal to $G_2$ lives in the normalizer.  

%

\end{remark}

\subsection{Generic 3-forms}\label{genericas}
Look at the previous subsection from a different approach. Consider the triple product    in Eq.~\eqref{eq_tripleproduct}
as a 3-form $\Omega_0\in\wedge^3V^*$ (as in Remark~\ref{re_3formaconcreta}). The action of $\GL_7(\FF)\equiv\GL(7)$ in the vector space of 3-forms is given by the pull-back
\begin{equation}\label{eq_accion}
\GL(7)\times\wedge^3V^*\to \wedge^3V^*,\quad (f,\Omega)\mapsto f\cdot\Omega:=f^*\Omega
\end{equation}
where $f^*\Omega(X,Y,Z)=\Omega(f^{-1}(X),f^{-1}(Y),f^{-1}(Z))$. Equation~\eqref{eq_preserva3forma} precisely says that 
$$
G_{\Omega_0}=\{f\in\GL(7):f\cdot\Omega_0=\Omega_0\},
$$
is a Lie group of type $G_2$, the isotropy group (the stabilizer) of the element $ \Omega_0\in\wedge^3V^*$.
This implies that the orbit of $\Omega_0$ (see Remark~\ref{re_orbitas} below) 
$\cO_{\Omega_0}=\{ f\cdot\Omega_0:f\in\GL(7)\}$
is a homogeneous space diffeomorphic to $\GL(7)/G_{\Omega_0}$, in particular of dimension $49-14=35$. But $\dim\wedge^3V^*=\left(\begin{array}{l}7\\3\end{array}\right)=35$, so that $\cO_{\Omega_0}$ should be an open set of $\wedge^3V^*$. In other words, $ {\Omega_0}$ is a generic $3$-form in $\wedge^3V^*$.

\begin{remark}\label{re_orbitas}
We add some references about why the orbits are submanifolds and they are open sets or not according to the dimension of the isotropy group. (Although here we have a polynomial action on a vector space, this is not necessary at all.)

If $G$ is a Lie group acting (smoothly) on a connected manifold $M$, then every orbit 
$\cO_{x_0}$ of a point $x_0\in M$ is a submanifold \cite[Lemma~2.13]{Sharpe}. In the proof, Sharpe takes $H_{x_0}=\{g\in G: g\cdot x_0=x_0\}$ the isotropy group and he shows that it is a closed subgroup of $G$, and that the induced map
$$
G/H_{x_0}\to M,\qquad gH\mapsto g\cdot x_0
$$
is an injective immersion with image $\cO_{x_0}$. So the inclusion $\iota\colon \cO_{x_0}\hookrightarrow M$ is also an immersion. Let us check that $\cO_{x_0}$ is open if and only if $\dim \cO_{x_0}$ (which coincides with $\dim G-\dim H_{x_0}$) equals $\dim M$. Of course, this condition is necessary.  But it is sufficient too, because if $\dim M=\dim \cO_{x_0}$ and $x\in \cO_{x_0}$, the differential $(\iota_*)_x\colon T_x\cO_{x_0}\to T_xM$ is not only a monomorphism but an isomorphism. Hence, Inverse Function Theorem \cite[1.30 Corollary (a)]{Warner} says that there is a neighborhood $U$ ($\subset \cO_{x_0}$) of $x$ such that $\iota\vert_U\colon U\to \iota(U)=U$ is a diffeomorphism onto the open set $U$ of $M$. The fact that each $x\in \cO_{x_0}$ possesses $U$ an open set of $M$ such that $x\in U\subset \cO_{x_0}$ means obviously  that $\cO_{x_0}$ is open.
\end{remark}

\begin{df}
A $p$-form $\Omega\in\wedge^p(\FF^n)^*$ ($p\le n$) is said a \emph{generic} form when its orbit under the action of $\GL(n)$ is open, what happens if and only if $$\dim G_{\Omega}=\dim \GL(n)-\dim \wedge^p(\FF^n)^*=n^2-\left(\begin{array}{l}n\\p\end{array}\right),$$
for $G_{\Omega}=\{f\in\GL(n):f\cdot\Omega=\Omega\}$.
\end{df}

The existence of generic $p$-forms is not possible in most of the dimensions. For instance, by dimension count, if $p=3$, this forces $n$ to be   at most $8$.
But, according to the above arguments, the existence in case  $(n,p)=(7,3)$ is guaranteed since just $\Omega_0$ is a generic $3$-form, and of course all the $3$-forms in the orbit  $\cO_{\Omega_0}$ are   generic too.  Some natural questions arise: how many orbits of generic $3$-forms will  there be in $\wedge^3V^*$? And, in case there are more than one, which is the isotropy group of an element in a second orbit? Of course, this group should have dimension $14$, 
but, is it also of type $G_2$? Although  the only  -complex and real-  simple Lie groups of dimension $14$ have type $G_2$,  nobody ensures us (a priori) that the isotropy group of an arbitrary $3$-form is simple. The answers were given so soon as in 1900 \cite{EngelG2}.\footnote{The classification of the orbits of the $3$-forms appears in   
Walter Reichel's thesis -see references in \cite{AgriG2}- and later in \cite{Schouten}. From an Algebraic Geometry approach, \cite{orbitas} classifies the connected linear algebraic groups over the complex numbers such that there is a rational irreducible representation
of the group on a finite-dimensional vector space   whose orbit is 
 Zariski-dense, and also their   invariants. Trilinear alternating forms on a 7-dimensional   vector space are classified in \cite{tresformas}.}

%

\begin{theorem}\label{teo_casoC}
There is only one orbit of generic 3-forms in $\wedge^3(\CC^7)^*$.
A representative is
$$\begin{array}{ll}\Omega_0=&-2\,\big(E_0^*\wedge E_1^*\wedge F_1^*+E_0^*\wedge E_2^*\wedge F_2^*+E_0^*\wedge E_3^*\wedge F_3^*\big)\vspace{3pt}
\\&\qquad-4\,E_1^*\wedge E_2^*\wedge E_3^*+4\,F_1^*\wedge F_2^*\wedge F_3^*.
\end{array}$$
\end{theorem}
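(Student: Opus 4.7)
The strategy is to reverse-engineer Section~\ref{sec_3formas}: from any generic $\Omega$ I will build intrinsically a non-degenerate quadratic form, a cross product, and an $8$-dimensional composition algebra on $\CC\oplus V$, and then appeal to the uniqueness of the complex octonion algebra. The first ingredient is a symmetric bilinear form $b_\Omega$ on $V$ constructed from $\Omega$ alone: fix a volume form $\omega_{\mathrm{vol}}\in\wedge^7V^*$; since $\iota_X\Omega,\iota_Y\Omega\in\wedge^2V^*$ commute under wedge product, the formula
$$ b_\Omega(X,Y)\,\omega_{\mathrm{vol}} \;:=\; \iota_X\Omega\wedge\iota_Y\Omega\wedge\Omega $$
defines a symmetric bilinear form depending polynomially on $\Omega$. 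Evaluating at $\Omega_0$ in the canonical basis recovers a non-zero scalar multiple of the norm $n$ of Eq.~\eqref{eq_lan}; this is exactly the content of the formula displayed in the remark preceding the theorem.

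Since $b_{\Omega_0}$ is non-degenerate, the $\GL(V)$-stable locus $U:=\{\Omega\in\wedge^3V^*:b_\Omega\ \text{non-degenerate}\}$ is a non-empty Zariski-open neighborhood of $\cO_{\Omega_0}$. Moreover, if $\Omega\notin U$ then the radical of $b_\Omega$ is an extra linear invariant of $\Omega$, which forces $\dim G_\Omega>14$ and prevents $\Omega$ from being generic; thus \emph{generic} is the same as membership in $U$. On $U$ I set $n_\Omega:=b_\Omega$ (with the scalar normalization chosen to match $\Omega_0$), define $\times_\Omega$ by $n_\Omega(X\times_\Omega Y,Z)=\Omega(X,Y,Z)$, and endow $\cC_\Omega:=\CC\oplus V$ with the product of Eq.~\eqref{eq_procto}. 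The crucial step is to check that $\cC_\Omega$ is a composition algebra, equivalently that $\times_\Omega$ is a cross product in the sense of Definition~\ref{def_crossproduct}: the required identities are polynomial in $\Omega$ (after clearing the denominator coming from inverting $n_\Omega$), they hold at $\Omega_0$ by Lemma~\ref{le_cross}, hence on $\cO_{\Omega_0}$ by $\GL(V)$-equivariance, and since $\cO_{\Omega_0}$ is Zariski-dense in the irreducible variety $U$, they extend to all of $U$. I expect this Zariski-density argument to be the main technical obstacle, as it is what avoids a brute-force verification of multiplicativity for arbitrary generic $\Omega$.

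Once $\cC_\Omega$ is a complex $8$-dimensional composition algebra, Hurwitz's theorem---using that over $\CC$ any two non-degenerate quadratic forms of the same rank are equivalent---yields a unique such algebra up to isomorphism. Let $\phi\colon\cC_\Omega\to\cC$ be an algebra isomorphism, with $\cC$ the octonion algebra of Lemma~\ref{le_octon}. From the quadratic equation of Remark~\ref{re_cuadratica}, the trace and norm are intrinsic to the unital algebra structure, so $\phi$ preserves $1$, $n$ and the imaginary part $V\subset\cC$, restricting to $\phi|_V\in\GL(V)$. Because $2(X\times Y)=XY-YX$ in any octonion algebra, $\phi|_V$ intertwines $\times_\Omega$ with $\times$, and then the defining relation of the triple product gives $\phi|_V\cdot\Omega=\Omega_0$. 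Hence $\Omega\in\cO_{\Omega_0}$, and this is the unique orbit of generic $3$-forms, with representative $\Omega_0$ as displayed.
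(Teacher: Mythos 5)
Your overall route is genuinely different from the paper's. The paper proves the cross-product identity via a hands-on Jordan-form analysis of the map $Y\mapsto X\wedge Y$ on $X^\perp$ (Proposition~\ref{pr_uncrossproduct}) and then exhibits an explicit basis matching $\Omega_0$; you instead aim to reduce everything to Hurwitz's uniqueness of the eight-dimensional complex composition algebra. Your closed formula $b_\Omega(X,Y)\,\omega_{\mathrm{vol}}=\iota_X\Omega\wedge\iota_Y\Omega\wedge\Omega$ is an elegant equivalent of the combinatorial sum in Lemma~\ref{le_lanorma}, and the Hurwitz endgame is sound: an isomorphism of unital composition algebras preserves norm and trace (Remark~\ref{re_cuadratica}), hence preserves $V=1^\perp$ and the cross product $X\times Y=\tfrac12(XY-YX)$, and so it carries $\Omega$ to $\Omega_0$. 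If the middle step were tight, this would be a shorter and arguably more conceptual proof than the paper's.

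The middle step, however, has a genuine gap exactly where you flag the difficulty. The composition identity $b_\Omega(X\times_\Omega Y,X\times_\Omega Y)=b_\Omega(X)b_\Omega(Y)-b_\Omega(X,Y)^2$ is \emph{not} $\GL(V)$-equivariant: tracking the pullback action $f\cdot\Omega$ from Eq.~\eqref{eq_accion} gives $b_{f\cdot\Omega}(X,Y)=(\det f)^{-1}b_\Omega(f^{-1}X,f^{-1}Y)$ and $f^{-1}(X\times_{f\cdot\Omega}Y)=(\det f)\,\bigl(f^{-1}X\times_\Omega f^{-1}Y\bigr)$, so the left-hand side of the composition identity transforms by the factor $\det f$ while the right-hand side transforms by $(\det f)^{-2}$. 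Thus even if the identity holds at $\Omega_0$ with your fixed scalar normalization, it fails at $f\cdot\Omega_0$ with the \emph{same} normalization---the required scale factor is multiplied by $(\det f)^3$. So ``holds at $\Omega_0$, hence on $\cO_{\Omega_0}$ by equivariance'' is false as written, and the Zariski-density argument then has nothing to propagate. The fix is to replace it with the genuinely $\GL(V)$-invariant homogeneous identity
$$
b_\Omega(X\times_\Omega Y,X\times_\Omega Y)\bigl(b_\Omega(Z)b_\Omega(W)-b_\Omega(Z,W)^2\bigr)
=b_\Omega(Z\times_\Omega W,Z\times_\Omega W)\bigl(b_\Omega(X)b_\Omega(Y)-b_\Omega(X,Y)^2\bigr),
$$
which is polynomial after clearing denominators, does hold on $\cO_{\Omega_0}$, and therefore extends to all of $U$ by density; it yields a well-defined proportionality constant $c_\Omega$, and one then rescales $b_\Omega$ by a cube root of $c_\Omega$. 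This cube-root choice is precisely the ambiguity $\epsilon$ that the paper must grapple with at the end of Proposition~\ref{pr_uncrossproduct}, so it cannot be waved away. A second, smaller flaw: the argument that a degenerate $b_\Omega$ ``forces $\dim G_\Omega>14$'' because the radical is an extra invariant does not hold---a $G_\Omega$-stable subspace gives no lower bound on $\dim G_\Omega$. The correct reason, and the one Lemma~\ref{le_lanorma}\,b) actually uses, is that $\cO_\Omega$ is then trapped inside the hypersurface $\det\varphi_\omega=0$ and so cannot be Euclidean-open.
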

\begin{theorem}\label{teo_casoR}
  There are just two orbits of generic 3-forms in $\wedge^3(\RR^7)^*$. A pair of representatives are $\Omega_0$ and
 \begin{equation}\label{eq_omegandefinida}
 \Omega_1=e^{ 147}+e^{257 }+e^{367 }+e^{123 }-e^{156 }+e^{246 }-e^{345 },
 \end{equation} 
where the used notation is $e^{ ijk}:=e_i^*\wedge e_j^*\wedge e_k^*$ for  $\{e_i^*\}_{i=1}^7$ a   basis of $(\RR^7)^*$.
\end{theorem}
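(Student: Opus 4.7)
The plan is to attach to any generic $3$-form a real invariant (a quadratic form, up to scalar), show that only two values occur on the generic orbit, and then invoke the classification of real forms of the complex $G_2$ (Subsection~\ref{se_reales}) to count the orbits.

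For $V=\RR^7$, fix a nonzero volume form $\omega\in\wedge^7 V^*$ and, for $\Omega\in\wedge^3 V^*$, define a symmetric bilinear form $b_\Omega$ on $V$ by
\[
b_\Omega(X,Y)\,\omega \;=\; (X\intprodl\Omega)\wedge(Y\intprodl\Omega)\wedge\Omega.
\]
Symmetry is automatic, since $X\intprodl\Omega$ is a $2$-form and wedges of even-degree forms commute. A direct check gives $b_{f\cdot\Omega}(fX,fY)=(\det f)^{-1}\,b_\Omega(X,Y)$ for every $f\in\GL(V)$, so the unordered signature $\{p,q\}$ of $b_\Omega$ depends only on the $\GL_7(\RR)$-orbit of $\Omega$, and $b_\Omega$ is nondegenerate exactly when $\Omega$ is generic. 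For $\Omega_0$ the remark preceding Proposition~\ref{pr_enson} identifies $b_{\Omega_0}$, up to a nonzero scalar, with the form $n$ of Equation~\eqref{eq_lan}, of signature $(4,3)$. For $\Omega_1$ a short computation using $e_i\intprodl\Omega_1$ (for instance $e_1\intprodl\Omega_1=e^{47}+e^{23}-e^{56}$) shows that $(e_i\intprodl\Omega_1)\wedge(e_j\intprodl\Omega_1)\wedge\Omega_1$ vanishes for $i\ne j$ and yields the same nonzero multiple of $e^{1234567}$ for every $i=j$; hence $b_{\Omega_1}$ is, in the canonical basis, a nonzero scalar multiple of the Euclidean form, of signature $(7,0)$. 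Since $(4,3)$ and $(7,0)$ are different unordered pairs, $\Omega_0$ and $\Omega_1$ lie in distinct $\GL_7(\RR)$-orbits, and $\Omega_1$ is in particular generic.

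To rule out a third orbit, let $\Omega$ be an arbitrary generic $3$-form on $\RR^7$. Its complexification lies in the unique $\GL_7(\CC)$-orbit of $\Omega_0$ by Theorem~\ref{teo_casoC}, so the real stabilizer $G_\Omega\subset\GL_7(\RR)$ is a real form of the complex Lie group $G_2$; by Subsection~\ref{se_reales} it is either the split form or the compact form. Because $G_\Omega$ preserves $b_\Omega$, its signature is forced to be $(4,3)$ in the split case and $(7,0)$ in the compact case, these being the only possibilities in the defining $7$-dimensional representation of each real form. Finally, each real form admits, up to isomorphism, a unique irreducible real $7$-dimensional representation, carrying a unique-up-to-scalar invariant $3$-form; a real intertwiner between $G_\Omega$ acting on $\RR^7$ and the analogous action of the stabilizer of $\Omega_0$ or $\Omega_1$ therefore carries $\Omega$ to a nonzero scalar multiple of $\Omega_0$ or $\Omega_1$, and the scalar is absorbed into the intertwiner. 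The main obstacle is this last step, where one must promote an abstract identification of stabilizers with matching invariant signature to an actual element of $\GL_7(\RR)$ conjugating the $3$-forms; structurally, this is cleanest via octonions, identifying $\Omega_1$ with the triple product coming from the compact (division) octonion algebra, exactly as $\Omega_0$ is the triple product of the split octonions, and then appealing to the two-element classification of real octonion algebras (Hurwitz).
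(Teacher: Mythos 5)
Your basis-free invariant $b_\Omega(X,Y)\,\omega=(X\intprodl\Omega)\wedge(Y\intprodl\Omega)\wedge\Omega$ (Hitchin's form) is an attractive alternative to the paper's $n_{\Omega,\cB}$ of Lemma~\ref{le_lanorma}: it depends only on a volume form rather than a basis, its transformation law $b_{f\cdot\Omega}(fX,fY)=(\det f)^{-1}b_\Omega(X,Y)$ makes the orbit-invariance of the unordered signature immediate, and since any two $G_\Omega$-invariant symmetric bilinear forms are proportional (Lemma~\ref{le_solounan}) it is a scalar multiple of $n_{\Omega,\cB}$. Your signature computations $\{4,3\}$ for $\Omega_0$ and $\{7,0\}$ for $\Omega_1$ are correct, so the two forms lie in different orbits, and genericity of $\Omega_1$ via nondegeneracy of $b_{\Omega_1}$ is justified by Remark~\ref{re_reciproco}, which ultimately rests only on the complex Theorem~\ref{teo_casoC}.

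The genuine gap, which you yourself flag, is the claim that there are \emph{at most} two orbits. You reduce it to three facts: the complex $G_2$ has exactly two real forms, each real form has a unique irreducible real $7$-dimensional representation, and that representation carries a unique invariant $3$-form up to scalar. All three are true, but none is available at this point in the paper: the two-real-forms statement is established only in Proposition~\ref{pr_finalmentedos}, whose proof uses Theorem~\ref{teo_casoR} itself (so citing it here would be circular); the uniqueness of the $7$-dimensional module (Proposition~\ref{pr_elmoduloreal}) and of the invariant $3$-form (Remark~\ref{re_S2enwedge3}) are also developed afterwards. And even granting them, one must still produce the explicit element of $\GL_7(\RR)$ carrying $\Omega$ to a multiple of $\Omega_0$ or $\Omega_1$; matching abstract stabilizer types and signatures is necessary but not constructive. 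Your closing suggestion — route through the associated cross product and octonion algebra, then invoke the two-element classification of real composition algebras — is precisely how the paper closes the gap, but it does so by hand: after complexifying to get $X\wedge(X\wedge Y)=\alpha\bigl(n(X,Y)X-n(X)Y\bigr)$ with $\alpha\in\RR$, a real rescaling of $n$ yields a cross product on $\RR^7$; then a case split on the sign of $n(X)$ produces, for $n$ indefinite, the isotropic eigenspaces $W_{\pm1}$ of $f=X\wedge(-)$ with $f^2=\id$ and a basis realizing $\Omega=\Omega_0$ (the complex argument transcribes verbatim), and for $n$ definite, $f^2=-\id$ and an explicit $J$-adapted orthonormal basis in which a direct evaluation gives $\Omega=\Omega_1$. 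Carrying out one of these normal-form constructions is the work your proposal still needs.
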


Before proving these theorems, we need some auxiliary results.

\begin{lemma}\label{le_lanorma}
Let $\Omega\in\wedge^3V^*$ be a $3$-form.
For any  basis  $\cB=\{b_i:i=1,\dots,7\}$    of $V=\FF^7$, define $n_{\Omega,\cB}\colon V\times V\to \FF$ the bilinear symmetric form given  by
$$
n_{\Omega,\cB}(X,Y)= \sum_{ \sigma\in S_7  }
(-1)^{\textrm{sg}\sigma} \Omega(X,b_{\sigma({1})},b_{\sigma({2})})\Omega(Y,b_{\sigma({3})},b_{\sigma({4})})
 \Omega(b_{\sigma({5})},b_{\sigma({6})},b_{\sigma({7})}).
$$
Then,
\begin{itemize}
\item[a)] $n_{\Omega,\cB'} =(\det P)\,n_{\Omega,\cB}$ for $P$ the matrix of the change of basis from $\cB'$ to $\cB$, so that the norm is determined by the $3$-form up to scalar multiple;
\item[b)] If $\Omega$ is  a generic 3-form, then $n_{\Omega,\cB}$ is a nondegenerate bilinear symmetric form.\footnote{In fact, this is an equivalence, see Remark~\ref{re_reciproco} below.}  
\end{itemize}
\end{lemma}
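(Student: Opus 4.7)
The plan is to reinterpret $n_{\Omega,\cB}(X,Y)$ as a nonzero constant times the evaluation of a top-degree form on $V$ at the basis $\cB$, and then to deduce (a) from the standard transformation law of top forms, and (b) from a polynomial/dimension argument on $\wedge^3V^*$. Concretely, writing $\iota_X\Omega$ for contraction, the defining sum is precisely the antisymmetrization formula for the wedge $\iota_X\Omega\wedge\iota_Y\Omega\wedge\Omega\in\wedge^7V^*$, so
$$
n_{\Omega,\cB}(X,Y) \;=\; 2!\cdot 2!\cdot 3!\cdot(\iota_X\Omega\wedge\iota_Y\Omega\wedge\Omega)(b_1,\ldots,b_7) \;=\; 24\,(\iota_X\Omega\wedge\iota_Y\Omega\wedge\Omega)(b_1,\ldots,b_7).
$$

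With this interpretation, part (a) is immediate: every $\omega\in\wedge^7V^*$ satisfies $\omega(b'_1,\ldots,b'_7)=\det(P)\,\omega(b_1,\ldots,b_7)$ when $b'_i=\sum_j P_{ji}b_j$, by multilinearity and antisymmetry of $\omega$; applied to $\omega=\iota_X\Omega\wedge\iota_Y\Omega\wedge\Omega$, this yields the desired identity.

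For part (b), first verify that $n_{\Omega_0,\cB_c}$ is nondegenerate, using the explicit identity in the Remark immediately preceding this lemma: it rewrites $n_{\Omega_0,\cB_c}(X,X)$ as a nonzero scalar multiple of the nondegenerate quadratic form $n$ of Eq.~\eqref{eq_lan}. Consequently the polynomial $p(\Omega):=\det[n_{\Omega,\cB}]$ on the $35$-dimensional space $\wedge^3V^*$ is not identically zero, and its zero locus $Z(p)$ is a proper Zariski-closed subset, containing no nonempty open subset of $\wedge^3V^*$. Moreover, a direct manipulation of the defining sum (or part (a) applied to the basis $f^{-1}(\cB)$) yields $n_{f\cdot\Omega,\cB}(X,Y)=(\det f)^{-1}n_{\Omega,\cB}(f^{-1}X,f^{-1}Y)$, so that $p(f\cdot\Omega)=(\det f)^{-9}p(\Omega)$ and $Z(p)$ is $\GL(7)$-invariant.

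Finally, if $\Omega$ is generic, its $\GL(7)$-orbit is open of full dimension $35$, hence cannot be contained in the proper closed subvariety $Z(p)$; the $\GL(7)$-invariance of $Z(p)$ then forces the orbit to be disjoint from $Z(p)$, so $p(\Omega)\neq 0$ and $n_{\Omega,\cB}$ is nondegenerate. The main subtlety is purely logical: one must avoid invoking the later Theorems~\ref{teo_casoC} and \ref{teo_casoR} on orbit classification, and the openness/invariance argument above accomplishes exactly that.
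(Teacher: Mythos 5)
Your proof is correct, and it improves on the paper's at a couple of points while sharing the overall logic of part (b).

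For part (a), your reinterpretation $n_{\Omega,\cB}(X,Y)=2!\,2!\,3!\,(\iota_X\Omega\wedge\iota_Y\Omega\wedge\Omega)(b_1,\dots,b_7)$ is a genuinely different and cleaner route: the paper instead expands the sum using the entries $a_{ij}$ of $P$ by trilinearity and shows, via a sign-reversing involution, that the only surviving contributions are the $7$-tuples which are permutations of $(1,\dots,7)$, so that $\det P$ drops out. Your identity makes the $\det P$ factor immediate (every $7$-form transforms by $\det P$), and as a free bonus it makes the symmetry of $n_{\Omega,\cB}$ conceptual ($\iota_X\Omega$ and $\iota_Y\Omega$ are $2$-forms, so their wedge commutes), whereas the paper proves symmetry by a separate permutation-sign argument. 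Just be aware that the constant $24$ depends on the wedge-normalization convention; only the proportionality matters.

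For part (b), the strategy — reduce nondegeneracy to the nonvanishing of a degree-$21$ homogeneous polynomial $p$ on $\wedge^3V^*$, and argue that an open orbit cannot lie in its zero locus — is essentially the paper's. Where you do better is in two places the paper leaves implicit: (i) you explicitly verify that $p$ is not the zero polynomial by evaluating at $\Omega_0$ using the identity from the Remark just before the Lemma (without this, $Z(p)$ could be all of $\wedge^3V^*$ and the argument collapses); and (ii) you make the transformation law $p(f\cdot\Omega)=(\det f)^{-9}p(\Omega)$ explicit, which both explains why $Z(p)$ is $\GL(7)$-invariant and immediately upgrades "the orbit is not contained in $Z(p)$" to "the orbit is disjoint from $Z(p)$." The paper records only $n_{f\cdot\Omega,\cB}(X,Y)=n_{\Omega,f^{-1}(\cB)}(f^{-1}X,f^{-1}Y)$, from which the same conclusion follows after applying part (a), but you spell out the computation. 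Your concern about avoiding the later classification Theorems is well-founded in principle, but note the paper's proof also does not use them; the only external input is the nondegeneracy of $n_{\Omega_0,\cB_c}$ from the preceding Remark, which you and the paper both rely on.
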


\begin{proof}
The symmetry comes from $\textrm{sg}\,\sigma=\textrm{sg}(\sigma(3)\,\sigma(4)\,\sigma(1)\,\sigma(2)\,\sigma(5)\,\sigma(6)\,\sigma(7))$ for any permutation $\sigma$. The  bilinearity is obvious,  consequence of being $\Omega$ trilinear. 

For proving a), if $P=(a_{ij})$ is the matrix of the change of basis from $\cB'$ to $\cB$, then the $k$th element in $\cB'$ is 
$\sum_i a_{ik}b_i$. For   any $X,Y\in V$, the trilinearity implies that $n_{\Omega,\cB'}(X,Y)$ is equal to
\begin{equation}\label{eq_sigmas}
 \sum_{\tiny{ \begin{array}{l}\sigma\in S_7\\i_j=1,\dots,7\end{array}  }}
(-1)^{\textrm{sg}\sigma}a_{i_1\sigma(1)}\dots a_{i_7\sigma(7)} \Omega(X,b_{i_1},b_{i_{2}})\Omega(Y,b_{i_{3}},b_{i_{4}})
 \Omega(b_{i_{5}},b_{i_{6}},b_{i_{7}}).
\end{equation}
But this sum can be considered indexed in $(i_1,\dots,i_7)\in S_7$ because, if $(i_1,\dots,i_7)$ is a fixed $7$-tuple which is not a permutation of $(1,\dots,7)$, then 
\begin{equation*}  
 \sum_{\tiny{  \sigma\in S_7    }}
(-1)^{\textrm{sg}\sigma}a_{i_1\sigma(1)}\dots a_{i_7\sigma(7)} \Omega(X,b_{i_1},b_{i_{2}})\Omega(Y,b_{i_{3}},b_{i_{4}})
 \Omega(b_{i_{5}},b_{i_{6}},b_{i_{7}})=0.
\end{equation*}
Indeed, there will be $j\ne k\in\{1,\dots,7\}$ such that $i_j=i_k$, and for each $\sigma\in S_7$ the related summand is just the opposite of the summand for the permutation $\mu=\sigma\circ (j\,k)$ 
($\mu(j)=\sigma(k)$, $\mu(k)=\sigma(j)$ and $\mu(i)=\sigma(i)$ for the remaining indices), since 
$(-1)^{\textrm{sg}\sigma}=-(-1)^{\textrm{sg}\mu}$ and the rest of the expression does not change.
 Thus, we obtain that \eqref{eq_sigmas} equals $(\det P)\,n_{\Omega,\cB}(X,Y)$ by writing 
 $\det P =\sum_{\sigma\in S_7}(-1)^{\textrm{sg}\sigma\circ i}a_{i_1\sigma(1)}\dots a_{i_7\sigma(7)}$.

In particular, the  nondegeneracy does not depend on the choice of $\cB$. Of course, also nondegeneracy does not depend on the choice of the $3$-form in a fixed orbit, since for every $f\in\GL(V)$,
$$
n_{f\cdot \Omega,\cB}(X,Y)=n_{\Omega,f^{-1}(\cB)}(f^{-1}(X),f^{-1}(Y)).
$$
So, let us fix $\cB$ and also fix $\psi\colon V^*\to V$ an isomorphism (there is no \emph{natural} isomorphism, but both spaces have the same dimension, so choose a basis $\{e_i^*\}_{i=1}^7$ of $V^*$ and take $\psi(e_i^*)=e_i$,  for instance the dual basis of $\cB$).  
For each $3$-form $\Omega$, define $\varphi_{\Omega}\colon V\to V^*$   by $\varphi_\Omega(X)(Y):=n_{\Omega,\cB}(X,Y)$. Thus, $n_{\Omega,\cB}$ is nondegenerate if and only if $\varphi_\Omega$ is an injective (linear) map, or equivalently if $\det\varphi_\Omega\ne0$, where we understand by $\det\varphi_\Omega:=\det\psi\varphi_\Omega$.
We have all the ingredients to see that if   $n_{\Omega,\cB}$ degenerates, then the orbit $\cO_{\Omega}$ cannot be open. Simply note that, as $\det\varphi_\Omega=0$, thus
$$
\cO_{\Omega}=\{f\cdot \Omega:f\in\GL(V)\}\subset \cM=\{\omega\in \wedge^3V^*:\det\varphi_\omega=0\},
$$
but $\cM$ cannot contain any open subset of the vector space $\wedge^3V^*$, because if $\omega=\sum_{1\le i<j<k\le 7} a_{ijk}e_i^*\wedge e_j^*\wedge e_k^*$ (we identify the form with its coordinates $(a_{ijk})\in\FF^{35}$), then $\det\varphi_\omega$ is  a homogeneous of degree 21 polynomial with coefficients in $\FF$ and variables $a_{ijk}$. 
\end{proof}

\begin{remark}\label{re_elijon}
It will be useful the fact that  
one can choose a basis $\cB$ such that $\cB$ is orthonormal\footnote{In the real case, we understand that a basis is   orthonormal when the vectors are  pairwise  orthogonal and each of them has length $\pm1$. }  for $n_{\Omega,\cB}$ (when it is nondegenerate).
Indeed, take $\cB_0 $ an arbitrary basis of $V$ and $n=n_{\Omega,\cB_0}$. Take $\cB_1=\{e_i\}_{i=1}^7 $ an orthonormal basis for $n$. Take $\cB_2=\{te_i\}_{i=1}^7 $, where $t\in\FF$ is a fixed nonzero scalar which we will determine next.
The matrix of the change of basis of $\cB_2$ to $\cB_1$ is $tI_7$ with determinant $t^7$, so
$$
n_{\Omega,\cB_2}(te_i,te_i)=t^7n_{\Omega,\cB_1}(te_i,te_i)=t^9n_{\Omega,\cB_1}(e_i,e_i)=t^9\alpha, 
$$
being $\alpha$ the determinant of the change of basis of $\cB_1$ to $\cB_0$.
 Of course $\cB_2$ is orthogonal for $n_{\Omega,\cB_2}$ (the orthogonality does not change by multiplying by nonzero scalars), so in order to obtain the desired basis we have only to choose $t\in\FF$ such that $t^9\alpha=1$. 
  Of course this is possible  whether $\FF=\RR$ or $\CC$.\smallskip
 
 Furthermore, an argument similar to the previous one shows that a norm $n$ related to $\Omega$ satisfying such property ($\cB$ is orthonormal for $n=n_{\Omega,\cB}$) is determined up to a ninth root of the unit (in particular, completely determined in the real case).   Simply observe that if $\cB$ and $\cB'$ are orthonormal for $n=n_{\Omega,\cB}$ and $n'=n_{\Omega,\cB'}$ respectively, then $n'=\alpha n$ for $\alpha=\det P$ being  $P=C_{\cB\cB'}$ the change of basis, so that $\alpha P^tP= I_7$ and, taking determinants, $\alpha^9=1$.
\end{remark}

\begin{lemma}\label{le_parteconexa}
Let $\Omega$ be a   3-form on $V=\FF^7$ such that the  
related  bilinear form $n$ as in Lemma~\ref{le_lanorma} is nondegenerate. Then
\begin{itemize}
\item[a)] $(\det g)^9=1$ for all $g\in G_\Omega$;
\item[b)] $G_\Omega^+\subset G_\Omega\cap\SL(V)=G_\Omega\cap\mathrm{O}(V,n)$;
\item[c)] If $\FF=\RR$, then $G_\Omega \subset\SO(V,n)$.
\end{itemize}
\end{lemma}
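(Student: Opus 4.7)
The plan is to extract a single transformation rule for $n = n_{\Omega,\cB}$ under the action of $g\in G_{\Omega}$ that encodes everything. The key observation is that Lemma~\ref{le_lanorma}(a) can be combined with the $g$-invariance of $\Omega$: applying the formula for $n_{\Omega,\cB'}$ with $\cB' = g(\cB) = \{gb_i\}$, the change-of-basis matrix from $\cB'$ to $\cB$ is precisely the matrix of $g$ in $\cB$, so part (a) of that lemma gives
\[
n_{\Omega,g(\cB)} = (\det g)\, n_{\Omega,\cB}.
\]
On the other hand, substituting directly into the defining sum and using the identity $\Omega(X,gb_i,gb_j) = \Omega(g^{-1}X,b_i,b_j)$ (and analogously for the other factors) yields $n_{\Omega,g(\cB)}(X,Y) = n(g^{-1}X,g^{-1}Y)$. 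Combining and replacing $X,Y$ by $gX,gY$ produces the fundamental relation
\[
n(gX,gY) = (\det g)^{-1}\, n(X,Y),
\]
i.e., in matrix form $G^{t}NG = (\det g)^{-1}N$ where $N$ is the Gram matrix of $n$ in $\cB$ and $G$ that of $g$.

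Taking determinants, and using that $N$ is invertible by hypothesis, gives $(\det g)^{2} = (\det g)^{-7}$, hence $(\det g)^{9}=1$, which is (a). For (b), the same matrix equation shows that $\det g = 1$ forces $G^{t}NG = N$ (that is, $g \in \mathrm{O}(V,n)$), and conversely if $g\in\mathrm{O}(V,n)$ then $N = (\det g)^{-1}N$, hence $\det g = 1$ because $N\neq 0$; this yields the equality $G_{\Omega}\cap \SL(V) = G_{\Omega}\cap \mathrm{O}(V,n)$. The inclusion $G_{\Omega}^{+}\subset G_{\Omega}\cap\SL(V)$ follows because by (a) the continuous map $\det\colon G_{\Omega}\to\FF^{*}$ takes values in the group $\mu_{9}$ of ninth roots of unity, which is discrete in $\FF^{*}$, so the connected identity component must map to $\{1\}$.

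Finally, (c) is immediate: when $\FF=\RR$, the only real ninth root of unity is $1$, so already $G_{\Omega}\subset \SL(V)$ (no component argument is needed); applying (b), $G_{\Omega}\subset \mathrm{O}(V,n)\cap \SL(V) = \SO(V,n)$. The only mildly delicate step is the careful bookkeeping in Step~1 to make sure that the sum defining $n_{\Omega,g(\cB)}$ transforms as claimed under the $g$-invariance of $\Omega$; once that is set up correctly, everything else is a determinant computation.
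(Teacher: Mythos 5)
Your proof is correct and follows essentially the same route as the paper: both derive the transformation law $(\det g)\,n(gX,gY)=n(X,Y)$ for $g\in G_\Omega$ from Lemma~\ref{le_lanorma}(a) together with the $G_\Omega$-invariance of $\Omega$, and then read off (a) by taking determinants, (b) from the resulting matrix identity plus continuity of $\det$ on the identity component, and (c) because $1$ is the only real ninth root of unity. The only difference is one of packaging: the paper obtains $(\det g)^9=1$ in part (a) by passing through the orthonormal basis of Remark~\ref{re_elijon} (showing $g(\cB)$ is again orthonormal for $n_{\Omega,g(\cB)}$ and applying the $\alpha^9=1$ observation there), and then re-derives the relation $n_{\Omega,\cB}=(\det g)\,n_{\Omega,\cB}(g\,\cdot\,,g\,\cdot\,)$ separately in part (b); you extract the single matrix identity $G^tNG=(\det g)^{-1}N$ once, and all three parts fall out of it. This is slightly more streamlined and avoids the dependence on the orthonormal basis remark, but it is the same underlying argument.
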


\begin{proof} 
a) Fix $\cB$ any basis of $V=\FF^7$ orthonormal for $n=n_{\Omega,\cB} $.
If $g\in G_\Omega$, as 
$$
n_{\Omega,\cB} (X,Y)=n_{g^{-1}\cdot\Omega,\cB} (X,Y)=n_{\Omega,g(\cB)} (g(X),g(Y)),
$$
 then $g(\cB)$ satisfies the same property as $\cB$, and, by the above remark, $(\det g)^9=1$.

b) For any $g\in G_\Omega$, by Lemma~\ref{le_lanorma}\,a),
$$
n_{\Omega,\cB}(X,Y)=n_{\Omega,g(\cB)} (g(X),g(Y))=(\det g) \,n_{\Omega, \cB}(g(X),g(Y)),
$$
so, if $\det g=1$ then $g\in \mathrm{O}(V,n)$ and the converse is also true. This proves $G_\Omega\cap\SL(V)=G_\Omega\cap\mathrm{O}(V,n)$.
Moreover, by a), the map 
$$
\det\colon G_\Omega\to\{\xi\in\CC:\xi^9=1\}
$$
is   well defined. But it is a continuous map, what implies that the image of $G_\Omega^+$ is constant, equal to 1 
since $\id_V\in G_\Omega^+$. This means that
$G_\Omega^+$ is a subgroup of $\SL(V)$, so that hence a subgroup of $\SO(V,n)$.

c) This is trivial now, since $1$ is the only ninth root of the unit in $\RR$.
\end{proof}

 \begin{remark}\label{re_cotasdimensiones}
Consider the action $\GL_7(\FF)\times\wedge^3V^*\to \wedge^3V^*$ as in Eq.~\eqref{eq_accion}, and a  $3$-form $\Omega$  such that the related bilinear form $n$ is nondegenerate. We know that $\dim G_\Omega\ge14$, and it is just 14 when $\Omega$ is generic (afterwards we will prove that this will be the case, since $n$ is nondegenerate). 
As in the above lemma, $G_\Omega^+ \subset\SO(V,n)$. We will prove in Lemma~\ref{pr_concomplejo} and in Lemma~\ref{pr_conreal} (complex and real cases, respectively) that $G_\Omega^+=G_\Omega\cap  \mathrm{O}(V,n)$, but, in the meantime we will denote by $G_\Omega^0:= G_\Omega \cap \mathrm{O}(V,n)$, for working with this group of dimension equal to $\dim G_\Omega$.
 %
Thus, the orbit $G_\Omega^0\cdot X$ for a nonisotropic vector $X\in V$ of the action $G_\Omega^0\times V\to V$ will be fully contained in $\{Y\in V: n(Y)=n(X)\}$, a manifold which has dimension $6$ whenever $n\ne0$. This implies that the isotropy group
\begin{equation*}\label{eq_isotgr}
H_X=\{g\in G_\Omega^0: g(X)=X\}
\end{equation*}
has dimension $\dim H_X=\dim G_\Omega^0-\dim G_\Omega^0\cdot X\ge 14-6=8$.

Define $Y\wedge Z\in V$ to satisfy $n(Y\wedge Z,-)=\Omega(Y,Z,-)$, which is well defined by nondegeneracy  of $n$. Then $H_X$ is a subgroup of
\begin{equation}\label{eq_hx}
H_X':=\{g\in\GL(V): n(g(Y))=n(Y),\ g(X)=X,\ g(X\wedge Y)=X\wedge g(Y)\ \forall Y\in V\},
\end{equation}
which will  also be a group of dimension at least 8. This fact $H_X\subset H_X' $ is a direct consequence of 
$$
n(X\wedge gY,gZ)=\Omega(gX,gY,gZ)=n(X\wedge Y,Z)=n(g(X\wedge Y),gZ)
$$
if $g\in H_X$, and of $n$ nondegenerate.
\end{remark}

Now, we can almost exhibit a proof  of the theorem, 
using arguments which are a mixture of  \cite{ultimo,Hitchin,Herz}  since the original Engel's proof is difficult to find.
(Many ideas also appear in \cite[Chapter~6]{Harvey}, but the proofs are not complete there.)
The key of the argument is to prove first the next result.    


\begin{proposition}\label{pr_uncrossproduct}
Let $\Omega\in\wedge^3V^*$ be a   3-form on $V=\CC^7$ such that 
the  bilinear symmetric form   associated by Lemma~\ref{le_lanorma} is nondegenerate.
Then one of the related bilinear forms, $n$, satisfies that      
the multiplication 
$
\wedge\colon V\times V\to V
$ defined by 
$$
n(X\wedge Y,Z)=\Omega(X,Y,Z),
$$
  is a cross product. 
\end{proposition}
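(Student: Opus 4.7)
The first cross-product axiom $n(X\wedge Y,X)=n(X\wedge Y,Y)=0$ is immediate: by the defining relation and the alternating property of $\Omega$,
$n(X\wedge Y,X)=\Omega(X,Y,X)=0$, and similarly in the other argument. So the real content is the Gram-determinant identity
\begin{equation*}
n(X\wedge Y,X\wedge Y)=n(X)n(Y)-n(X,Y)^2.
\end{equation*}
My expectation is that this will hold only up to a universal scalar $\lambda\in\FF^{\times}$ depending on $\Omega$, and that a suitable rescaling of $n$ kills $\lambda$. Concretely, if we replace $n$ by $cn$ the corresponding $\wedge$ becomes $c^{-1}\wedge$, so the left-hand side scales by $c^{-1}$ and the right-hand side by $c^2$; a cube-root choice $c=\lambda^{1/3}$ (available in $\CC$) normalises the identity to the stated form. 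This justifies the phrase \emph{``one of the related bilinear forms''} in the statement.

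To prove the proportionality, I would pass to a basis $\cB=\{b_i\}_{i=1}^{7}$ orthonormal for $n=n_{\Omega,\cB}$, which exists over $\CC$ by Remark~\ref{re_elijon}. Set $\omega_{ijk}:=\Omega(b_i,b_j,b_k)$, totally antisymmetric. From $n(b_i\wedge b_j,b_k)=\omega_{ijk}$ and the orthonormality one reads off $b_i\wedge b_j=\sum_k\omega_{ijk}b_k$, so the claim translates into the tensorial identity
\begin{equation*}
\sum_{k=1}^{7}\omega_{ijk}\omega_{pqk}=\lambda\bigl(\delta_{ip}\delta_{jq}-\delta_{iq}\delta_{jp}\bigr)\quad\text{for all }i,j,p,q.
\end{equation*}
On the other hand, the orthonormality $n_{\Omega,\cB}(b_p,b_q)=\delta_{pq}$, using the explicit $S_7$-sum formula defining $n_{\Omega,\cB}$, gives a cubic constraint
\begin{equation*}
\delta_{pq}=\sum_{\sigma\in S_7}(-1)^{\mathrm{sg}\,\sigma}\,\omega_{p,\sigma(1),\sigma(2)}\,\omega_{q,\sigma(3),\sigma(4)}\,\omega_{\sigma(5),\sigma(6),\sigma(7)}
\end{equation*}
on the components $\omega_{ijk}$. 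The Lagrange-type identity is to be deduced from this cubic constraint; one case ($i=p$, $j=q$, $i\ne j$) gives $\sum_k\omega_{ijk}^2=\lambda$ independently of the pair, while the off-diagonal cases force the corresponding contractions to vanish.

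The main obstacle is precisely this last step: extracting a quadratic relation from a cubic one. A more conceptual reformulation that I would try is to interpret the formula for $n_{\Omega,\cB}$ as $n(X,Y)\,\nu=c\,i_X\Omega\wedge i_Y\Omega\wedge\Omega$, where $\nu\in\Lambda^{7}V^{*}$ is the top form dual to $\cB$. The target identity then becomes an equality of $7$-forms obtained from $\Omega$ by wedge and contraction, which one verifies by evaluating on the basis $\cB$ using only the antisymmetry of $\Omega$ and the normalisation $n_{\Omega,\cB}=I_{7}$. Combining this with the cube-root rescaling of $n$ yields the desired cross product, completing the proof.
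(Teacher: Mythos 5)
Your first two observations are sound and agree with the paper: the alternating property of $\Omega$ gives the orthogonality axiom for free, and the Gram-determinant identity can only be expected to hold up to a universal scalar, which a cube-root rescaling of $n$ (available over $\CC$) then normalises away — indeed the paper does exactly this rescaling at the end of its argument. The translation into tensor components $\omega_{ijk}$ in an $n$-orthonormal basis, with target
$\sum_k\omega_{ijk}\omega_{pqk}=\lambda(\delta_{ip}\delta_{jq}-\delta_{iq}\delta_{jp})$
and hypothesis the cubic constraint $\delta_{pq}=\sum_\sigma(-1)^{\mathrm{sg}\,\sigma}\omega_{p\sigma(1)\sigma(2)}\omega_{q\sigma(3)\sigma(4)}\omega_{\sigma(5)\sigma(6)\sigma(7)}$, is also a faithful reformulation of the problem.

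The gap is precisely the one you flag yourself: you never actually derive the quadratic identity from the cubic one. Your ``more conceptual reformulation'' $n(X,Y)\,\nu=c\,i_X\Omega\wedge i_Y\Omega\wedge\Omega$ is just a coordinate-free restatement of the definition of $n_{\Omega,\cB}$; evaluating it on the basis ``using only antisymmetry'' reproduces the cubic constraint, it does not yield the degree-two Lagrange identity. There is no purely formal contraction that lowers the degree; some genuinely extra input is needed, and your proposal does not supply it. What the paper uses — and what your argument is missing — is a group-theoretic ingredient: because the $\GL(7)$-orbit of $\Omega$ sits in the $35$-dimensional space $\wedge^3V^*$, the stabiliser $G_\Omega$ has dimension at least $14$, hence the isotropy subgroup $H'_X$ of any nonisotropic $X$ has dimension at least $8$. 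The paper then studies the skew-adjoint operator $f(Y)=X\wedge Y$ on $X^\perp$, shows $0$ is not an eigenvalue, and performs a Jordan-block case analysis: if $f$ had a nontrivial nilpotent part or distinct eigenvalue pairs, the centraliser of $f$ inside $\frso(X^\perp,n)$ would be too small to accommodate $\mathfrak{h}'_X$, a contradiction. This forces $f^2=\lambda^2\id$ on $X^\perp$, and a continuity argument then shows $\lambda^2/n(X)$ is independent of $X$. Without that dimension count and spectral analysis (or some equivalent), the step from the cubic identity to the quadratic one is not justified, so as written the proposal does not constitute a proof.
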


\begin{proof}  
Take $n=  n_{\Omega,\cB}$  for $\cB$ a basis of $V$ such that $\cB$ is orthonormal for
$ n$ as in Remark~\ref{re_elijon}.  Afterwards we will replace $n$ with a suitable nonzero multiple. We are going  to prove that $\wedge$ is a cross product relative to $n$, but, as $\Omega$ is alternating, 
clearly $n(X\wedge Y,X)=n(X\wedge Y,Y)=0$, and our concrete  purpose will be to prove the second equation in Definition \ref{def_crossproduct}. 
  
Let $X\in V$ be a nonisotropic element, that is, $n(X)\ne0$. Thus $V$ decomposes as a direct sum of $\CC X$ and $X^\perp$. Let us prove that for any $Y\in V$ orthogonal to $X$, then $X\wedge(X\wedge Y)=-n(X)Y$. Consider 
the linear map $f\colon X^\perp\to X^\perp$ given by $f(Y)=X\wedge Y$, which is well defined since $n(X\wedge Y,X)=0$. It is linear and skew-adjoint,\footnote{Our purpose is to check that $f^2$ is diagonalizable, but this cannot be concluded of the fact of being $f^2$ selfadjoint, since we are working on $\CC$. We need some extra-information. For $f$ we have   eigenvectors, but, as they are necessarily isotropic, there is no guarantee of an invariant complementary subspace.}
 since $n(f(Y),Z)=\Omega(X,Y,Z)=-\Omega(X,Z,Y)=-n(Y,f(Z))$.

 Let us check first that $0$ is not an eigenvalue, that is, $f$ bijective.
Indeed, if $f(Y)=0$ for $Y\ne0$, then $X\wedge Y=0$, that is, $\Omega(X,Y,-)=0$.  
As $X$ and $Y\in X^\perp$ are linearly independent, complete $\{X,Y\}$  to a basis $\cB'=\{ b_i\}_{i=1}^7$ of $V$ such that $b_1=X$ and $b_2=Y$. As $X$ is not isotropic, $0\ne  n_{\Omega,\cB'}(X,X)$ (the isotropic cone does not depend on the choice of the basis, by Lemma~\ref{le_lanorma}a)), so that there is $\sigma\in S_7$ such that all $\Omega(X,b_{\sigma(1)},b_{\sigma(2)})$,
$\Omega(X,b_{\sigma(3)},b_{\sigma(4)})$ and $\Omega(b_{\sigma(5)},b_{\sigma(6)},b_{\sigma(7)})$ are nonzero complex numbers. Hence the index $1\in\{\sigma(5),\sigma(6),\sigma(7)\}$. But $2$ has to be another of the six remaining indices, so that  $\Omega(X,Y,b_i)\ne 0$ for some $i$, a contradiction.  

Also, since $f\in\frso(X^\perp,n)$,
 \begin{equation}\label{eq_juegon}
 n(Y,(f+\lambda\id)(Z))=-n((f-\lambda\id)(Y),Z)
 \end{equation}
  for all $Y,Z\in X^\perp$.

We start with the Jordan decomposition of the endomorphism: 
$$
X^\perp=\ker(f-\lambda_1\id)^{s_{\lambda_1}}\oplus\dots
\oplus\ker(f-\lambda_k\id)^{s_{\lambda_k}}
$$
 and try to prove that all $s_{\lambda_i}$'s are $1$. Note that $\ker(f-\lambda\id)^s$ is orthogonal to $\ker(f-\alpha\id)^{t}$   if $\lambda+\alpha\ne0$. Indeed, for $0\ne Y\in \ker(f-\lambda\id)^s$, 
$$
0=n(Z,(f-\lambda \id)^s(Y)) \stackrel{\eqref{eq_juegon}}=(-1)^sn((f+\lambda\id)^s(Z),Y) 
$$
so that $Y$ is orthogonal to $\textrm{Im}(f+\lambda\id)^s$. But $\ker(f-\alpha\id)^{t}\subset \textrm{Im}(f+\lambda\id)^s$, because $ (f+\lambda\id)^s\vert_{\ker(f-\alpha\id)^{t}}$ is a bijective endomorphism (if $g=(f+\lambda\id)^s$, $W=\ker(f-\alpha\id)^{t}$, then $g(W)\subset W$ since $g$ commutes with $(f-\alpha\id)^{t}$ so $g$ is well defined, but
$\ker(f-\alpha\id)^{t}\cap\ker(f+\lambda\id)^s=0$ for $\lambda+\alpha\ne0$, which gives the injectivity and hence $W=g(W)\subset \textrm{Im}(g)$).

The nondegeneracy of $n$ implies now that $s_{\lambda}$ coincides with  $s_{-\lambda}$ for any eigenvalue $\lambda$, since the corresponding blocks are dual for $n$. The only possibilities are $s_{\lambda}\in\{1,2,3\}$. Let $s$ be the maximum of $s_{\lambda}$ for all $\lambda $ eigenvalue.\smallskip

$\boxed{s=3}$\ \ 
Take $u_3\in\ker(f-\lambda\id)^3\setminus \ker(f-\lambda\id)^2$.
Take $u_i=(f-\lambda\id)^{3-i}(u_3)$ for $i=1,2$. Choose any $v_1\in\ker(f+\lambda\id)^3$ such that $n(u_1,v_1)=1$.
It is not difficult to check that $v_1\notin\ker(f+\lambda\id)^2$.
Take $v_2=-(f+\lambda\id)v_1$ and $v_3=-(f+\lambda\id)v_2$.
Thus $\{u_1,u_2,u_3,v_1,v_2,v_3\}$ is a basis of $X^\perp$ relative to which the matrix of $f$ is
$$ \left(
\begin{array}{cccccc}
 \lambda  & 1 & 0 & 0 & 0 & 0 \\
 0 & \lambda  & 1 & 0 & 0 & 0 \\
 0 & 0 & \lambda  & 0 & 0 & 0 \\
 0 & 0 & 0 & -\lambda  & 0 & 0 \\
 0 & 0 & 0 & -1 & -\lambda  & 0 \\
 0 & 0 & 0 & 0 & -1 & -\lambda  \\
\end{array}
\right)=\left(
\begin{array}{cc}J_1&0\\0&-J_1^t
\end{array}
\right).$$
Besides the matrix of $n$ relative to such basis is $\left(
\begin{array}{cc}0&I_3\\I_3&0
\end{array}
\right)$: 

\noindent On one hand, $n(u_1,v_i)=n(u_3,(f+\lambda\id)^2(v_i))=n(u_3,0)=0$ if $i=2,3$;
also $n(u_2,v_3)=n(u_3,(f+\lambda\id)(v_3))=n(u_3,0)=0$,
and analogously $n(v_i,u_j)=0$ if $j>i$. On the other hand, $n(u_1,v_1)=n((f-\lambda\id)(u_2),v_1)=n(u_2,-(f+\lambda\id)(v_1))=n(u_2,v_2)$ and similarly it is equal to $n(u_3,v_3)$.

Now the Lie algebra $\frh_X'\subset \frso(X^\perp,n)$ of the Lie group $H_X'$ in Eq.~\eqref{eq_hx}
 in Remark~\ref{re_cotasdimensiones}   has dimension greater than $8$. An element $g\in\frh_X'$ satisfies $g(X)=X$,
 $g\vert_{X^\perp}\in\frso(X^\perp,n)$ and $gf=fg$.
The matrix related to any element $g\in\frso(X^\perp,n)$ relative to the above basis should be a block-matrix 
$\left(
\begin{array}{cc}a&b\\c&-a^t
\end{array}
\right)$
with $b$ and $c$ skew-symmetric matrices of size $3$ ($\dim\frso(6)=15$), so commuting with $f$ is equivalent to 
\begin{equation}\label{eq_buscandocontradi}
J_1b+bJ_1^t=0,\ J_1^tc+cJ_1=0,\ aJ_1=J_1a.
\end{equation}
By making these computations, we get $b=c=0$ and $a\in\langle\{I_3,J_1,J_1^2\}\rangle$. This means that $\dim\frh_X'=3$, a contradiction with the fact of containing a subalgebra of dimension greater than 8. That is, $s=3$ is not a true possibility.\smallskip

$\boxed{s=2}$\ \ 
With analogous arguments to the previous case, it is possible to find a basis of $X^\perp$ relative to which   
the matrix of $f$ is 
$$ \left(
\begin{array}{cccccc}
 \alpha  & 0 & 0 & 0 & 0 & 0 \\
 0 & \lambda  & 1 & 0 & 0 & 0 \\
 0 & 0 & \lambda  & 0 & 0 & 0 \\
 0 & 0 & 0 & - \alpha  & 0 & 0 \\
 0 & 0 & 0 & 0 & -\lambda  & 0 \\
 0 & 0 & 0 & 0 & -1 & -\lambda  \\
\end{array}
\right)=\left(
\begin{array}{cc}J_2&0\\0&-J_2^t
\end{array}
\right),$$
($\alpha$ and $\lambda$ nonzero complex numbers) and the matrix of $n$   is $\left(
\begin{array}{cc}0&I_3\\I_3&0
\end{array}
\right)$. Again this case does not occur, since any element $g\in\frh_X'$ would have as a matrix relative to such basis 
$\left(
\begin{array}{cc}a&b\\c&-a^t
\end{array}
\right)$
with $b$ and $c$ skew-symmetric $3\times 3$ matrices satisfying the relations in
\eqref{eq_buscandocontradi} for  $J_2$. This forces $b$ and $c$ to be zero, and $a$ commuting with $J_2$ makes
$$ 
a=\left(
\begin{array}{ccc}a_{11}&0&a_{13}\\a_{21}&a_{22}&a_{23}\\0&0&a_{22}
\end{array}
\right) \textrm{if $\alpha=\lambda$},\qquad
a=\left(
\begin{array}{ccc}a_{11}&0&0\\0&a_{22}&a_{23}\\0&0&a_{22}
\end{array}
\right)  \textrm{if $\alpha\ne\lambda.$}
$$
In any case, $\dim\frh_X'\le5$ cannot be greater than 8. \smallskip

$\boxed{s=1}$\ \ 
 In the third case (a posteriori the only possible case), we can find $\{u_1,u_2,u_3,v_1,v_2,v_3\}$  a basis of $X^\perp$ relative to which the matrix of $f$ and $n$ are, respectively,
$$ \left(
\begin{array}{cccccc}
 \lambda_1  & 0 & 0 & 0 & 0 & 0 \\
 0 & \lambda_2  & 0 & 0 & 0 & 0 \\
 0 & 0 & \lambda_3  & 0 & 0 & 0 \\
 0 & 0 & 0 & -\lambda_1  & 0 & 0 \\
 0 & 0 & 0 & 0 & -\lambda_2  & 0 \\
 0 & 0 & 0 & 0 & 0 & -\lambda_3  \\
\end{array}
\right),\qquad\left(
\begin{array}{cc}0&I_3\\I_3&0
\end{array}
\right).$$
We can assume (changing if necessary $u_i$ with $v_i$)
that $\lambda_1=\lambda_2=\lambda_3$. Indeed,  if $\lambda_1\notin\{\pm\lambda_2,\pm\lambda_3\}$,
any $g\in\frh_X'$ commutes with $f$ and leaves the $f$-eigenspaces invariant. In particular, 
$g$ leaves $\langle u_1,v_1\rangle$ and   $\langle u_2,v_2,u_3,v_3\rangle$ invariant (again its related matrix is a block-matrix) and, as $n$ does not degenerate in those subspaces, $g\vert_{X^\perp}\in\frso(2)\oplus\frso(4)$. As this space has dimension $1+6=7$,   we would again   obtain a  contradiction. 

Then the matrix of $f^2$ in this basis is $\lambda^2 I_6$ (for $\lambda=\lambda_1$), which means that
$$
X\wedge(X\wedge Y)=f^2(Y)=\lambda^2 Y
$$
for all $Y\in X^\perp$. 
We would like to prove   $\lambda^2=- n(X)$, and, although it is not true,  we will check that  $\lambda^2/n(X)$ does not depend on the chosen nonisotropic $X$.
On one hand, consider the basis $\cB_1=\{X,u_1,u_2,u_3,v_1,v_2,v_3\}$ of $V$ and let us compute $n_{\Omega,\cB_1}(X,X)$. As
$\Omega(X,u_i,v_i)=n(X\wedge u_i,v_i)=n(f(u_i),v_i)=n(\lambda u_i,v_i)=\lambda$ but $\Omega(X,u_i,v_j)=0=\Omega(X,u_i,u_j)$ if $i\ne j$, then
$$
n_{\Omega,\cB_1}(X,X)=2\cdot2\cdot 3!\cdot3!  \Omega(X,u_1,v_1)\Omega(X,u_2,v_2)\Omega(X,u_3,v_3)=144\lambda^3.
$$
On the other hand,  $\cB_2=\{\frac{X}{\sqrt{n(X)}},\frac{u_1+v_1}{\sqrt2},\frac{u_1-v_1}{\mathbf{i}\sqrt2},
\frac{u_2+v_2}{\sqrt2},\frac{u_2-v_2}{\mathbf{i}\sqrt2},
\frac{u_3+v_3}{\sqrt2},\frac{u_3-v_3}{\mathbf{i}\sqrt2} \}$ is
an  orthonormal basis for $n$, so that its change of basis to the original basis $\cB$ has determinant $\pm1$ and 
$n_{\Omega,\cB_2}(X,X)=\pm n_{\Omega,\cB}(X)=\pm n(X)$. The last ingredient is
$$
\det C_{\cB_1\cB_2}= \frac{1}{\sqrt{n(X)}}\left(\det\left(\begin{array}{cc}\frac{1}{\sqrt2}&\frac{1}{\mathbf{i}\sqrt2}\\\frac{1}{\sqrt2}&\frac{-1}{\mathbf{i}\sqrt2}
\end{array}
\right) \right)^3=\ \frac{-1}{\mathbf{i}^3\sqrt{n(X)}}, 
$$ 
so $144\lambda^3=\det C_{\cB_2\cB_1}\,n_{\Omega,\cB_2}(X)=\pm{\mathbf{i}^3\sqrt{n(X)}} n(X)$
and $\left(\frac{\lambda}{\mathbf{i}\sqrt{n(X)}}\right)^3=\pm\frac1{12^2}$. Hence $\lambda^6=-\frac1{12^4}n(X)^3$ and $\lambda^2=-\epsilon 12^{-4/3}n(X)$ for $\epsilon$ a cubic root of the unit.
We have proved that for each nonisotropic $X$, there is $\epsilon_X\in\CC$, $\epsilon_X^3=1$  such that  $X\wedge(X\wedge Y)= -\epsilon_X 12^{-4/3}n(X) Y$ for all $Y\in X^\perp$. 
The map $\{X\in V\mid n(X)\ne0\}\to\{1,\frac{-1\pm\mathbf{i}\sqrt3}{2}\}\subset\CC$ which sends $X\mapsto \epsilon_X$ is continuous, and the first set is connected and the second one is discrete, so that the map is   constant   and $\epsilon=\epsilon_X$ does not depend on $X$. 

If we replace $n$ with $n'=sn$, then the wedge product defined by $n'(X\wedge' Y,Z)=\Omega(X,Y,Z)$ is $\wedge'=\frac1s\wedge$, so 
$$
X\wedge'(X\wedge' Y)= - \frac\epsilon{s^212^{4/3}} n(X) Y= - \frac\epsilon{s^312^{4/3}} n'(X) Y=-n'(X) Y,
$$
choosing $s\in\CC$ such that $s^312^{4/3}=\epsilon$. 
For the rest of the proof, we consider this $n'$ as our new norm $n$.

Therefore, for any nonisotropic $X$, it holds
\begin{equation}\label{eq_mainidentity}
X\wedge(X\wedge Y)= n(X,Y)X-n(X) Y
\end{equation}
 for all $Y\in V$, since it is true if $Y\in X^\perp$, also if $Y=X$ and this expression is linear in $Y$. It is not difficult to conclude that 
 the identity \eqref{eq_mainidentity}
holds for any $X\in V$ by using only linear algebra arguments, but an argument using continuity is faster here, since $\{X\in V\mid n(X)\ne0\}$ is an open dense set. Now, we apply $n(-,Y)$ to Eq.~\eqref{eq_mainidentity}
to get
$$\begin{array}{ll}
n(X,Y)^2-n(X)n(Y)&=n(n(X,Y)X-n(X) Y,Y)\\&=n( X\wedge(X\wedge Y),Y) 
 =-n(X\wedge Y,X\wedge Y),
\end{array}
$$
and hence we infer  that $\wedge$ is a cross product.\smallskip
\end{proof}

\begin{proof} (Of Theorem~\ref{teo_casoC}.)
Take $n$ given by Proposition~\ref{pr_uncrossproduct} such that the related product $\wedge\colon V\times V\to V$ is a cross product. 
Linearizing\footnote{
That is,   applying three times  Eq.~\eqref{eq_mainidentity}
to $(X+Y)\wedge  ((X+Y)\wedge  Z)-X\wedge  (X\wedge  Z)-Y\wedge  (Y\wedge Z)$.}
 Eq.~\eqref{eq_mainidentity}, we get 
\begin{equation}\label{eq_linealizada}
X\wedge  (Y \wedge  Z)+Y\wedge  (X\wedge   Z)=n(X,Z)Y+n(Y,Z)X-2n(X,Y)Z
\end{equation}
for any $X,Y,Z\in V$. 

Take a nonisotropic vector $X\in V$ such that $n(X)=-1$. We have checked that
$$X^\perp=W_1\oplus W_{-1},\quad n(W_1)=n(W_{-1})=0
$$
for the eigenspaces $W_{\pm1}=\{Y\in X^\perp \mid X\wedge Y=\pm Y\}$.
Observe that, if $Y\in W_1$, $Z\in W_{-1}$, then 
\begin{equation}\label{eq_multiplic}
Y\wedge Z=-n(Y,Z)X.
\end{equation}
Indeed, by Eq.~\eqref{eq_linealizada}, 
$X\wedge  (Y \wedge  Z)-Y\wedge  Z=n(Y,Z)X$. Changing the role of $Y$ and $Z$ in Eq.~\eqref{eq_linealizada},
we also get 
$X\wedge  (Z \wedge  Y)+Z\wedge  Y=n(Z,Y)X$ (recall $X\wedge Y=Y$ and  $X\wedge Z=-Z$). Summing both identities, we have
$-2Y\wedge Z=2n(Y,Z)X.$ 

In particular, it holds $\Omega( W_1, W_1, W_{-1})=n(W_1\wedge W_{-1},W_1)\subset n(X,W_1)=0$. 
If $\Omega(X^\perp,X^\perp,X^\perp)=0$, take $Y\in X^\perp$ nonisotropic, which will satisfy $L_Y(X^\perp)$ orthogonal to $X^\perp$, for $L_Y=Y\wedge-$. But $L_Y\colon Y^\perp\to Y^\perp$ is bijective (recalling the previous proof), so $L_Y(Y^\perp\cap X^\perp)$ is a 5-dimensional vector subspace of $(X^\perp)^\perp$; a contradiction. Thus 
 $\Omega(X^\perp,X^\perp,X^\perp)\ne0$, what implies that either  $\Omega( W_1, W_1, W_1)\ne0$ or  $\Omega( W_{-1}, W_{-1}, W_{-1})\ne0$. But one of the assertions implies the other one.
For instance, take $Y_1,Y_2,Y_3\in W_1$ such that $t=\Omega(Y_1,Y_2,Y_3)\ne0$. Take $Z_i=s Y_{i+1}\wedge Y_{i+2}$ (indices modulo 3) for some $0\ne s\in\CC$ which we determine next. Thus,
\begin{itemize}
\item[i)] $Z_i$  is orthogonal to $X$ since $n(X,  Y_{i+1}\wedge Y_{i+2})=n(X\wedge Y_{i+1},Y_{i+2})\subset n(W_1,W_1)=0$;
\item[ii)] $Z_i\in W_{-1}$ since $X\wedge (Y_{i+1}\wedge Y_{i+2})\stackrel{\eqref{eq_linealizada}\& i)}{=}-Y_{i+1}\wedge (X \wedge Y_{i+2})=-Y_{i+1}\wedge Y_{i+2};$
\item[iii)] $n(Y_i,Z_i)=s\Omega(Y_i,Y_{i+1},Y_{i+2})=st$;
\item[iv)] If $i\ne j$, $n(Y_i,Z_j)=s\Omega(Y_i,Y_{j+1},Y_{j+2})=0$ (some repeated index).
\end{itemize}
In particular, $Z_1,Z_2,Z_3$ is a basis of $W_{-1}$. Besides,
\begin{itemize}
\item[v)] As  $Z_3\wedge (Y_2\wedge Y_3)=-Y_2\wedge (Z_3\wedge Y_3)+n(Z_3,Y_3)Y_2+0=-Y_2\wedge n(Y_3,Z_3)X+n(Y_3,Z_3)Y_2=2n(Y_3,Z_3)Y_2$, then
$$\begin{array}{rl}\Omega(Z_1,Z_2,Z_3)&=s\Omega(Y_2\wedge Y_3,Z_2,Z_3)=sn(Z_3\wedge (Y_2\wedge Y_3),Z_2)\\&=s2n(Y_3,Z_3)n(Y_2,Z_2)=2s^3t^2.\end{array}
$$
\end{itemize}
Thus, also $\Omega( W_{-1}, W_{-1}, W_{-1})\ne0$. For $t=-4$ and $s=\frac12$, we get ($2s^3t^2=4$)
\begin{equation}\label{transisplit}
\begin{array}{ll}
\Omega(X,Y_i,Z_i)=-2,\qquad\qquad& n(X,X)=-1,\\
\Omega(Y_1,Y_2,Y_3)=-4,&n(Y_i,Z_i)=-2,\\
\Omega(Z_1,Z_2,Z_3)=4.&
\end{array}
\end{equation}
Hence, if we take $\varphi\colon V\to V$ the isomorphism given by $\varphi(E_0)=X$, $\varphi(E_i)=Y_i$ and $\varphi(F_i)=Z_i$, it is clear from Remark~\ref{re_3formaconcreta}   that $\varphi^*\Omega_0=\Omega$. Thus any generic 3-form belongs to the orbit of $\Omega_0$.
\end{proof}

\begin{proof} (Of Theorem~\ref{teo_casoR}.)

Let $\Omega\in\wedge^3V^*$ be a generic 3-form on $V=\RR^7$.
Fix $\cB$ any basis of $V$ and  $n\equiv n_{\Omega,\cB}$ the nondegenerate bilinear symmetric form in Lemma~\ref{le_lanorma} and take  
$
\wedge\colon V\times V\to V
$
the multiplication defined by the nondegeneracy of $n$ as
$
n(X\wedge Y,Z)=\Omega(X,Y,Z).
$
If we complexify, $\Omega^\CC\in\wedge^3(\CC^7)^*$ is again a  generic 3-form, due to the fact that 
$\dim_\CC G_{\Omega^\CC}=\dim_\RR G_\Omega=14$.  Thus Theorem~\ref{teo_casoC}  says that  there is $\alpha\in\CC$ such that   
\begin{equation*}
X\wedge(X\wedge Y)= \alpha(n(X,Y)X-n(X) Y)
\end{equation*}
for all $X,Y\in V$. But $X\wedge(X\wedge Y)$ and $n(X,Y)X-n(X) Y$ both belong to $V$, so that $\alpha\in\RR$.
If we take $s\in\RR$ such that $s^3=\alpha$ and  we replace $n$ with $s n$ and consequently $\wedge$ with $\frac1s\wedge$, we obtain that the new $\wedge$ is a cross product on $V=\RR^7$ relative to the new $n$.\medskip

\boxed{\textrm{Case $n$ is not positive definite}}
Take $X\in V$ such that $n(X)=-1$. Thus $X\wedge(X\wedge Y)=Y$ for all $Y\in X^\perp$ and the endomorphism $f\colon X^\perp\to X^\perp$ given by $f(Y)=X\wedge Y$ satisfies $f^2=\id$. Its minimum polynomial is $x^2-1$, so that it is diagonalizable with eigenvalues $\pm1$, that is,  $X^\perp=W_1\oplus W_{-1}$ for the eigenspaces $W_{\pm1}=\{Y\in X^\perp \mid X\wedge Y=\pm Y\}$. Besides $n(W_1)=n(W_{-1})=0
$ ($f$ skew-adjoint for $n$ and the eigenspaces are totally isotropic). Now the proof continues exactly in the same way than the previous one until proving that, after a change of basis, $\Omega=\Omega_0$.

(In particular, this proves that there is a basis relative to which $n$ is given by \eqref{eq_lan} and the signature is just $(4,3)$.)\smallskip

\boxed{\textrm{Case $n$ is  positive definite}}
Now let us take $X\in V$ such that $n(X)=1$ and, as before, the endomorphism $f\colon X^\perp\to X^\perp$ given by $f(Y)=X\wedge Y$, which satisfies $f^2=-\id$ and it is skew-adjoint (in particular without any real eigenvalue). 
We choose
\begin{itemize}
\item $X_1\in X^\perp$ such that $n(X_1)=1$, $Y_1=f(X_1)$;
\item $X_2\in \langle X,X_1,Y_1\rangle^\perp$ such that $n(X_2)=1$, $Y_2=f(X_2)$;
\item $X_3=X_1\wedge X_2$,  $Y_3=f(X_3)$.
\end{itemize}
Note that $n(Y_1,Y_1)=n(f(X_1),f(X_1))=-n(X_1,f^2(X_1))=n(X_1)$ and $n(X_1,Y_1)=n(X_1,X\wedge X_1)=0$. Similarly $n\vert_{ \langle  X_s,Y_s\rangle}\equiv I_2$ for $s=2,3$ and also $Y_2$ belongs to $ \langle X,X_1,Y_1\rangle^\perp$, since $n(Y_2,X_1)=-n(X_2,Y_1)$ and $n(Y_2,Y_1)=n(X_2,X_1)$.
We are checking now that $X_3\in \langle X,X_1,Y_1,X_2,Y_2\rangle^\perp$. Indeed, $n(X_3,Z)=\Omega(X_1,X_2,Z)=0$ for $Z=X_1,X_2$ and also for $Z=X$, since $\Omega(X,X_1,X_2)=n(Y_1,X_2)=0$. Besides $X\wedge X_3\stackrel{\eqref{eq_linealizada}}{=}-X_1\wedge(X\wedge X_2)+0=-X_1\wedge Y_2$ is perpendicular to $X_1$, so that $n(X_3,Y_1)=n(X_3,X\wedge X_1)=-n(X\wedge X_3,X_1)=0$.
Analogously, $X\wedge X_3=-X\wedge(X_2\wedge X_1)\stackrel{\eqref{eq_linealizada}}{=}X_2\wedge(X\wedge X_1)$ is orthogonal to $X_2$ so that  $n(X_3,Y_2)=-n(X\wedge X_3,X_2)=0$.

As $n(X_3)=n(X_1)n(X_2)=1$, this implies that 
$\cB_{X^\perp}=\{X_1,Y_1,X_2,Y_2,X_3,Y_3\}$ is an orthonormal basis for $n\vert_{X^\perp}$ such that the matrix of $f$ is 
$$ \left(
\begin{array}{cccccc}
 0  & -1 & 0 & 0 & 0 & 0 \\
 1 & 0  & 0 & 0 & 0 & 0 \\
 0 & 0 & 0 & -1 & 0 & 0 \\
 0 & 0 & 1 & 0  & 0 & 0 \\
 0 & 0 & 0 & 0 & 0  & -1 \\
 0 & 0 & 0 & 0 & 1 & 0  \\
\end{array}
\right).$$
We are going to prove that

\begin{equation}\label{eq_omegandefinida1}
\begin{array}{rl}
\Omega=&X^*\wedge(X_1^*\wedge Y_1^*+X_2^*\wedge Y_2^*+X_3^*\wedge Y_3^*)\\+&
X_1^*\wedge X_2^*\wedge X_3^*-X_1^*\wedge Y_2^*\wedge Y_3^*
-Y_1^*\wedge X_2^*\wedge Y_3^*-Y_1^*\wedge Y_2^*\wedge X_3^*.
\end{array}
\end{equation} 
With that purpose, let us check first that
\begin{equation}\label{eq_tec}
\begin{array}{l}
X_s\wedge Y_s=X, \\
X_s\wedge Y_k=Y_s\wedge X_k \textrm{ if }s\ne k,\\
X_s\wedge X_k=-Y_s\wedge Y_k  \textrm{ for all }s,k.
\end{array}
\end{equation}
For instance, we can use the proof of Theorem~\ref{teo_casoC} by complexifying, since $n(\mathbf{i}X)=-1$, 
$W_{1}=\langle X_s+\mathbf{i}Y_s\mid s=1,2,3\rangle$ and $W_{-1}=\langle X_s-\mathbf{i}Y_s\mid s=1,2,3\rangle$.
According to Eq.~\eqref{eq_multiplic}, $(X_s+\mathbf{i}Y_s)\wedge(X_k-\mathbf{i}Y_k)=-n(X_s+\mathbf{i}Y_s,X_k-\mathbf{i}Y_k)\mathbf{i}X$. The real part gives $X_s\wedge X_k+Y_s\wedge Y_k =(n(Y_s,X_k)+n(X_s,Y_k))X=0$ and the imaginary part
gives $Y_s\wedge X_k-X_s\wedge Y_k=-(n(X_s,X_k)+n(Y_s,Y_k))X=-2\delta_{sk}X$, and hence Eq.~\eqref{eq_tec} holds.

If $Z_i,S_i,T_i\in\{X_i,Y_i\}$, we would like to know  $\Omega(Z_i,S_j,T_k)$ for all $i,j,k$, but it is always $0$ except if $(i,j,k)$ is a permutation of $(1,2,3)$, because $\Omega(X_i,X_i,-)=0$ (evaluating in $X^\perp$) and $\Omega(X_i,Y_i,-)=n(X,-)=0$. 
Then we can assume  $i=1$, $j=2$, $k=3$. Thus, by using repeatedly Eq.~{\eqref{eq_tec}},
\begin{itemize}
\item[$\circ$] $\Omega(X_1,X_2,X_3)=n(X_3)=1$;
\item[$\circ$] $\Omega(X_1,Y_2,Y_3)=n(X_1,Y_2\wedge Y_3)=-n(X_1,X_2\wedge X_3)=-1$;
\item[$\circ$] $\Omega(X_1,X_2,Y_3)=n(X_3,Y_3)=0$;
\item[$\circ$] $\Omega(X_1,Y_2,X_3)=n(X_1,Y_2\wedge X_3)=n(X_1,X_2\wedge Y_3)=0$;
\item[$\circ$] $\Omega(Y_1,X_2,X_3){=}\Omega(X_1,Y_2,X_3)=0$;
\item[$\circ$] $\Omega(Y_1,X_2,Y_3)=\Omega(X_1,Y_2,Y_3)=-1$;
\item[$\circ$] $\Omega(Y_1,Y_2,X_3)=-\Omega(X_1,X_2,X_3)=-1$;
\item[$\circ$] $\Omega(Y_1,Y_2,Y_3)=-\Omega(X_1,X_2,Y_3)=0$;
\end{itemize}
that proves Eq.~\eqref{eq_omegandefinida1}.
Hence we have shown that, if we consider the basis of $V$ given by   $\{e_i\}_{i=1}^7=\{X_1,X_2,X_3,Y_1,Y_2,Y_3,X\}$ and $\{e_i^*\}_{i=1}^7$ the dual basis in $V^*$ ($e_i^*(e_j)=\delta_{ij} $), then our $3$-form $\Omega$ is just $\Omega_1$ in Eq.~\eqref{eq_omegandefinida}.\smallskip

  This would complete the proof that there are at most two orbits, related respectively with a norm of signature $(4,3)$ and $(0,7)$. For completing that the number of orbits is just two, we have only to be sure that $\Omega_1$ as in Eq.~\eqref{eq_omegandefinida} is generic.
It suffices to see that the complexification $\Omega_1^\CC$ is generic. 
But, by repeating the proof of   Theorem~\ref{teo_casoC},
we conclude that the wedge-product defined by $\Omega_1^\CC$ and $n^\CC$ is a cross product for a suitable multiple of $n^\CC$ and then $\Omega_1^\CC$ is in the orbit of $\Omega_0$, in other words, $\Omega_1^\CC$ is generic 
and $\Omega_1 $ is too ($14=\dim_\CC G_{\Omega_1^\CC}=\dim_\RR G_{\Omega_1}$). Although   the hypothesis of Theorem~\ref{teo_casoC} was $\Omega$ being generic, the only point we really needed throughout the proof  was that the related bilinear form was nondegenerate to apply then Proposition~\ref{pr_uncrossproduct}.
\end{proof}

\begin{remark}\label{re_reciproco}
Following the   arguments in the last paragraph, the converse of Lemma~\ref{le_lanorma}b) is also true and the nondegeneracy of $n_{\cB,\Omega}$ implies that $\Omega$ is generic.

Besides, as a consequence of Theorem~\ref{teo_casoC}, in the complex case 
$\cO_{\Omega_0}= \{\omega\in \wedge^3V^*:\det\varphi_\omega\ne 0\},$
so that the complementary is the set of zeros of a polynomial (closed in the Zarisky topology) and 
$\cO_{\Omega_0}$ is also open (hence dense) in the Zarisky topology (topology coarser than the usual one). 
\end{remark}   

\begin{remark}\label{re_S2enwedge3}
We add just one brief comment   on representation theory   
related again to 3-forms and quadratic forms.
For   $\Omega_0$ our generic 3-form on $V=\FF^7$ with $\cB_c=\{b_i:i=1,\dots,7\} =\{E_0,E_1,E_2,E_3,F_1,F_2,F_3\}$ our basis of $V $, the map given by
$$
\wedge^3V^*\to S^2V^*,\qquad \Omega\mapsto n_\Omega\colon V\times V\to\FF 
$$
being
$$
n_ \Omega (X,Y)= \sum_{ \sigma\in S_7  }
(-1)^{\textrm{sg}\sigma} \Omega_0(X,b_{\sigma({1})},b_{\sigma({2})})\Omega_0(Y,b_{\sigma({3})},b_{\sigma({4})})
 \Omega(b_{\sigma({5})},b_{\sigma({6})},b_{\sigma({7})}),
$$
is 
$G_2$-invariant and $\frg_2$-invariant, where by $G_2$ we again mean $G_{\Omega_0}^0=G_{\Omega_0}\cap\SL(V)=\{f\in\SL_7(\FF):f\cdot \Omega_0=\Omega_0\}$. Written using the notation  in Section~\ref{se:mod}, this means that the $\frg_2$-module $S^2V^*\equiv V(2\omega_1)\oplus V(0)$ should be a submodule of  the $\frg_2$-module $\wedge^3V^*\equiv V(2\omega_1)\oplus   V(\omega_1)\oplus V(0)\equiv S^2V^*\oplus V$, as  does occur. \end{remark}

\subsection{Brief comments on $G_2$-structures}\smallskip

A seven-dimensional smooth manifold M is said
to admit a $G_2$-structure if there is a reduction of the
structure group of its frame bundle from $\GL_7(\mathbb R)$
to the group $G_2$, viewed naturally as a subgroup of
$\SO(7)$. This is equivalent to fix a 3-form $\omega$ on $M$ such that $\omega_p\in\wedge^3(T_pM)^*$ is generic for all $p\in M$.  So each tangent space $T_pM$ can be identified with the imaginary octonions (zero trace octonions)
because $\omega_p$ endows $T_pM$  with a cross product as in Proposition~\ref{pr_uncrossproduct}.  Also a scalar product and an orientation in  $T_pM$  are determined 
as a consequence of Lemmas~\ref{le_lanorma} and  \ref{le_parteconexa} respectively.  Note that the signature of the scalar product does not depend on the choice of a basis in $T_pM$, and moreover, for connected manifolds, it does not depend on the point $p\in M$: under any identification of $T_pM$ with $\mathbb R^7$, either $\omega_p$ is in the orbit of $\Omega_1$ for all $p$ or in the orbit of $\Omega_0$ for all $p\in M$, and consequently $T_pM$ can be identified with the imaginary part of either the 
division octonion algebra or the split octonion algebra respectively. Now all this goes  from the vector spaces to the manifold, 
what explains in part why the two real forms of $G_2$
are of such eminent importance in differential geometry: any manifold with a reduction to one of the real forms of $G_2$ carries a
Riemannian metric or respectively a semi-Riemannian metric of signature $(3,4)$, jointly with an
orientation.


%

An introduction to modern
$G_2$-geometry and its relevance in   superstring theory  can be found in \cite{AgriG2,agrilecturenotes}.
An accessible paper about $G_2$-manifolds (that is, $\nabla^g\omega=0$)
is \cite{Karigiannis}.


\section{$G_2$ and the spheres in dimension 6}\label{se_esferadim6}

The compact (resp. split) group of type $G_2$ acts transitively on the six-dimensional sphere (resp. on certain pseudohyperbolic space). It is easy to understand this relationship by going on with the arguments in the previous section.

\subsection{Case the norm is  positive definite}\smallskip

First, consider $\Omega_1\in\wedge^3(\RR^7)^*$ in Eq.~\eqref{eq_omegandefinida}, the corresponding positive definite norm $n$ and the related cross product $\times$ in $V=\RR^7$.
If we take, as in Section~\ref{se:octo}, $\OO=\RR\oplus V$  with the product where $1\in\RR$ is a unit
and 
$$
XY:=-n(X,Y)1+X\times Y,
$$
for all $X,Y\in V$, and with the positive definite norm $n\colon \OO\to \RR$ defined by $n(1)=1$, $n(1,V)=0$ and $n\vert_V=n$, we obtain
  that $\OO$ is   an {octonion algebra}, with a proof completely similar to that one of Lemma~\ref{le_octon}, since $\times $ is again a cross product. This algebra $\OO$ is a division algebra ($\cC$ is not), since, for $\alpha\in\RR$ and $X\in V$,
$$
(\alpha+X)(\alpha-X)=(\alpha^2+n(X))1
$$
is nonzero if $\alpha+X\ne0$. 
Sometimes $\OO$ is called the \emph{(division) octonion} algebra and $\cC$   the \emph{split octonion} algebra (often denoted by $\OO_s$ in the real case, and by $\OO_s^\CC\cong\OO^\CC$ in the complex one). 

The multiplication in $\OO=\RR\langle1, e_i:1\le i\le 7\rangle$ can be recalled by noting  that $e_i^2=-1$    and that the only nonzero products of basic elements (up to the unit) are
$$\begin{array}{c}
e_1e_4=e_2e_5=e_3e_6=e_7;\\
e_1e_2=e_3; \ e_1e_6=e_5;  \ e_2e_4=e_6; \ e_3e_5=e_4;
\end{array}
$$
taking into account that if $ e_{ i}e_{j }=e_{k }$, also $e_{j }e_{ k}=e_{ i}$ and $e_{k }e_{ i}=e_{ j}$, and $e_ie_j=-e_je_i$ if 
$i\ne j$.  Another easy way of recalling these products is Figure~\ref{fig_fano}, Fano plane: in any of the seven \emph{lines} (the circle is also a line) the basic elements multiply according to the arrows. Thus, each triplet $ \{e_{ i},e_{j },e_{k }\}$ in one of these lines spans a subalgebra isomorphic to $\RR^3$ with the usual cross product. (And of course $ \RR\langle1,e_{ i},e_{j },e_{k }\rangle$ is isomorphic to the -division- quaternion algebra $\HH$ when $e_{ i}$, $e_{j }$ and $e_{k }
$ are the three different elements in one line.)
  
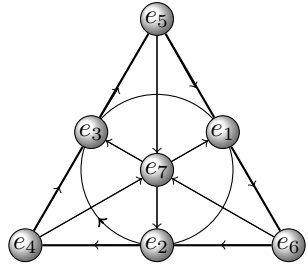
\begin{figure}
 \begin{center}  
    \begin{tikzpicture}
\tikzstyle{point}=[ball color=white, circle, draw=black, inner sep=0.02cm]
\node (v7) at (0,0) [point] {$e_7$};
\draw[->] (0,0) circle (1cm);
 \draw[thick,-<] ({-1/sqrt(2)},{-1/sqrt(2)}) -- ({-1/sqrt(2)+0.001},{-1/sqrt(2)-0.001});
\node (v1) at (90:2cm) [point] {$e_5$};
\node (v2) at (210:2cm) [point] {$e_4$}; 
\node (v4) at (330:2cm) [point] {$e_6$};
\node (v3) at (150:1cm) [point] {$e_3$};
\node (v6) at (270:1cm) [point] {$e_2$};
\node (v5) at (30:1cm) [point] {$e_1$};  
\draw [->, color=black] (v4) -- (v7);
\draw [->, color=black] (v1) -- (v7);
\draw [->, color=black] (v2) -- (v7);
\draw [->, color=black] (v7) -- (v3);
\draw [->, color=black] (v7) -- (v6);
\draw [->, color=black] (v7) -- (v5);
\draw [->, color=black] (v6) -- (230:13mm);\draw [->, color=black] (v4) -- (310:13mm);
\draw [->, color=black] (v1) -- (65:12mm);\draw [->, color=black]  (v3) --  (115:12mm) ; 
\draw [->, color=black] (v5) -- (350:13mm);\draw [->, color=black] (v2) -- (190:13mm);
\draw[-,thick] (v1) -- (v3) -- (v2);
\draw[-,thick] (v2) -- (v6) -- (v4);
\draw[-,thick] (v4) -- (v5) -- (v1);
\draw (v3) -- (v7) -- (v4);
\draw (v5) -- (v7) -- (v2);
\draw (v6) -- (v7) -- (v1);
\end{tikzpicture}
\caption{Fano plane.}\label{fig_fano}
\end{center}
\end{figure}

Then,  
$$
\begin{array}{rl}
\frg_c&:=\{f\in\frgl_7(\RR):\Omega_1(f(X),Y,Z)+\Omega_1(X,f(Y),Z)+\Omega_1(X,Y,f(Z))=0\}
\\&=\{f\in\frgl_7(\RR):f(X\times Y)=f(X)\times Y+X\times f(Y)\quad \forall X,Y\in V\}
\\
&\cong \Der(\OO)=\{d\in\frgl(\OO)\equiv\frgl_8(\RR):d(XY)=d(X)Y+Xd(Y)\}
\end{array}
$$
is a Lie algebra of type $G_2$ (a real form).  %
\begin{corollary}\label{co_conexoscompactos} The group
\begin{equation*}  
\begin{array}{rl}
G_{\Omega_1}:&=\{f\in\GL_7(\RR):\Omega_1(f(X),f(Y),f(Z))=\Omega_1(X,Y,Z)\  \forall X,Y,Z\in V\},\\
&=\{f\in\GL_7(\RR): f(X)\times f(Y)=f(X\times Y) \  \forall X,Y\in V\},
\end{array}
\end{equation*}
is a   connected  Lie group of type $G_2$,  subgroup of $\SO(7)$. 
It is isomorphic -under the obvious extension to $\OO$ making $f(1)=1$- to
$$
\Aut(\OO)=\{f\in\GL(\OO): f(X  Y) =f(X) f(Y)\  \forall X,Y\in \OO\}.
$$
\end{corollary}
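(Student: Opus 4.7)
The plan is to mirror the structure of Corollary~\ref{co_conexossplit} but in the compact setting, using the octonion division algebra $\OO$ in place of $\cC$ and Theorem~\ref{teo_casoR} in place of Theorem~\ref{teo_casoC}. First I would note that $G_{\Omega_1}$ is a closed subgroup of $\GL_7(\RR)$ (defined by polynomial equations), hence a Lie subgroup by the closed subgroup theorem; its Lie algebra is $\frg_c$, the compact real form of type $G_2$, as identified in the discussion preceding the statement. That the two descriptions of $G_{\Omega_1}$ (preservation of $\Omega_1$, respectively of $\times$) coincide is the analogue of the same equivalence in the split case: one direction is immediate from $\Omega_1(X,Y,Z)=n(X\times Y,Z)$ and the fact that any $f$ preserving $\times$ also preserves $n$ (Lemma~\ref{le_antisim}); for the reverse direction, if $f$ preserves $\Omega_1$ then by Lemma~\ref{le_parteconexa}c) applied to $\Omega=\Omega_1$ one has $f\in\Ort(V,n)$ (after checking that the bilinear form $n_{\Omega_1,\cB}$ of Lemma~\ref{le_lanorma} is a positive multiple of $n$, so that the corresponding orthogonal group is exactly $\Ort(7)$), and then
\[
n(f(X\times Y),f(Z))=\Omega_1(X,Y,Z)=\Omega_1(f(X),f(Y),f(Z))=n(f(X)\times f(Y),f(Z)),
\]
so that nondegeneracy of $n$ forces $f(X\times Y)=f(X)\times f(Y)$. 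Moreover Lemma~\ref{le_parteconexa}c) directly yields $G_{\Omega_1}\subset \SO(V,n)=\SO(7)$.

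The isomorphism with $\Aut(\OO)$ is a routine group-level version of Proposition~\ref{pr_preservacross}. Any $\varphi\in\Aut(\OO)$ sends $1$ to $1$ (since $\varphi(1)=\varphi(1)\varphi(1)$ and $\varphi$ is invertible), preserves trace and norm (these are recovered algebraically from the quadratic equation \eqref{eq_cuad}, so the subspace $V$ of imaginary octonions is stable), and by the very definition of the product \eqref{eq_procto} the restriction $\varphi|_V$ preserves $\times$. Conversely, if $f\in G_{\Omega_1}$ I would extend it to $\tilde f\in\GL(\OO)$ by $\tilde f(1)=1$, $\tilde f|_V=f$, and verify multiplicativity on $\OO=\RR 1\oplus V$ using $f\in\SO(V,n)$ and $f(X\times Y)=f(X)\times f(Y)$. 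This provides a Lie group isomorphism $G_{\Omega_1}\cong\Aut(\OO)$.

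The main obstacle is connectedness. Following the hint in the remark after Corollary~\ref{co_conexossplit}, I would prove it by exhibiting a diffeomorphism $G_{\Omega_1}/\SU(3)\cong \mathbb{S}^6$ and concluding via the fibration argument: since $\SU(3)$ and $\mathbb{S}^6$ are both connected, so is the total space $G_{\Omega_1}$. Concretely, the plan is to let $G_{\Omega_1}$ act on the unit sphere $\mathbb{S}^6=\{X\in V\mid n(X)=1\}$ by restriction (well defined since $G_{\Omega_1}\subset\SO(7)$), and to show that this action is transitive with stabilizer $H_X$ of a unit vector $X$ isomorphic to $\SU(3)$. Transitivity would be established by a direct normal-form argument based on the proof of Theorem~\ref{teo_casoR} in the positive definite case: given $X,X'\in \mathbb{S}^6$, construct bases $\{X,X_1,Y_1,X_2,Y_2,X_3,Y_3\}$ and $\{X',X_1',Y_1',\ldots\}$ as in that proof (both producing the normal form \eqref{eq_omegandefinida1} of $\Omega_1$), and let $f\in\GL_7(\RR)$ be the corresponding basis-change; $f$ then automatically lies in $G_{\Omega_1}$ because both bases yield the same coordinate expression for $\Omega_1$. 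Computing the stabilizer, the endomorphism $J=X\times(-)$ turns $X^\perp\cong\RR^6$ into a complex vector space $(\CC^3,\langle\,,\,\rangle)$ with the hermitian form coming from $n|_{X^\perp}$ together with the Kähler 2-form $\omega(Y,Z)=n(JY,Z)$; the wedge product then restricts to a complex 3-form on $X^\perp$, which identifies $H_X$ with the unitary subgroup preserving all of these structures, namely $\SU(3)$. Once transitivity and this stabilizer computation are in hand, the diffeomorphism $G_{\Omega_1}/\SU(3)\cong \mathbb{S}^6$ and the connectedness of both quotient and fiber give the connectedness of $G_{\Omega_1}$.
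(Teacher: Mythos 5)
Your proposal follows the paper's route essentially step for step: the equivalence of the two descriptions and the containment in $\SO(7)$ via Lemma~\ref{le_parteconexa}, the isomorphism with $\Aut(\OO)$ via the group-level analogue of Proposition~\ref{pr_preservacross} (Proposition~\ref{pr_autmalcevyc} in the paper), and connectedness by exhibiting $G_{\Omega_1}/\mathrm{SU}(3)\cong\mathbb{S}^6$ (Proposition~\ref{le_elsigma}) and invoking the coset-fibration argument (Lemma~\ref{pr_conreal}). The only slip is a misattribution: the fact that an automorphism of $(V,\times)$ preserves $n$ is not Lemma~\ref{le_antisim} (which is the Lie-algebra statement $L_\FF\subset\frso(V,n)$), but rather the group-level computation via Eq.~\eqref{eq_mainidentity} carried out inside Proposition~\ref{pr_autmalcevyc}.
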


We will prove this corollary in Section~\ref{se_conexo}.

Recall, as in Lemma~\ref{le_parteconexa}c), that the group $G_{\Omega_1}\subset\SL(V)$ acts naturally in $V$, and the orbit of an element $X$ of norm 1 is contained in  $\{Y\in\RR^7:n(Y)=1\}\equiv\mathbb{S}^6$. Moreover, that orbit $G_{\Omega_1}\cdot X$ equals $\mathbb{S}^6$, that is, the action of $G_{\Omega_1}$ on the 6-dimensional   sphere is transitive. This is consequence of
  a more general fact that can be deduced from  the proof of Theorem~\ref{teo_casoR}: if we have $\{X,X_1,X_2\}$ and $\{X',X'_1,X'_2\}$ two pairs of three orthonormal vectors (hence linearly independent) such that $\Omega_1(X,X_1,X_2)=0=\Omega_1(X',X'_1,X'_2)$, then there is $g\in G_{\Omega_1}$ such that $g(X)=X'$, $g(X_1)=X'_1$ and $g(X_2)=X'_2$.
  To be precise, $g\in\GL(V)$ is the change of basis from the basis of $V$ given by
  $$\{X,X_1,X_2,X\times X_1,X\times X_2,X_1\times X_2,X\times(X_1\times X_2)\}$$
  to
  \begin{equation}\label{eq_transi}
  \{X',X'_1,X'_2,X'\times X'_1,X'\times X'_2,X'_1\times X'_2,X'\times(X'_1\times X'_2)\},
  \end{equation}
which preserves $\Omega_1$. Now we compute the isotropy group of $X$.


\begin{figure}
      \psfrag{x}{$X$ }\psfrag{TxS}{$T_X\mathbb{S}^2$}\psfrag{v}{$Y$}
\psfrag{Jv}{$J(Y)=X\times Y$}\psfrag{S2}{$\mathbb{S}^2$}
\psfrag{titel}{}
\includegraphics[width=5cm]{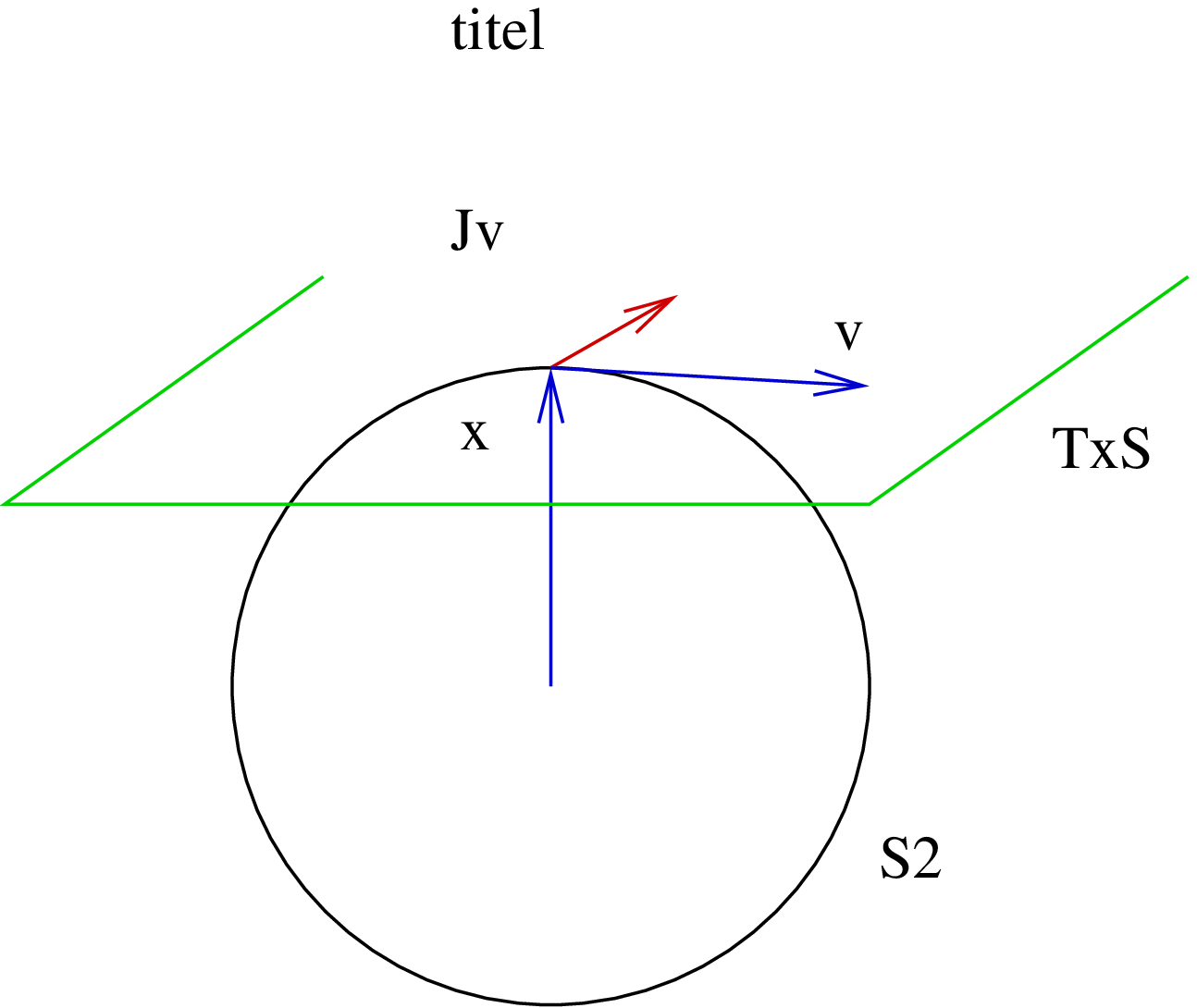}
\caption{Complex structure of  $\mathbb{S}^2$.}
\label{fig_e8}
  \end{figure}


\begin{proposition}\label{le_elsigma}
Consider $X\in\RR^7$ of norm 1 and $\mathbb{S}^6=\{Y\in\RR^7:n(Y)=1\}$. Recall the usual identification $T_X\mathbb{S}^6=X^\perp$. The linear map
\begin{equation}\label{eq_laJ}
J\colon T_X \mathbb{S}^6 \to T_X \mathbb{S}^6,\qquad  J(Y)=X\times Y,
\end{equation}
    satisfies $J^2=-\id$, so that it is a complex structure,  which allows to consider the 6-dimensional real vector space $W=T_X \mathbb{S}^6$ as a 3-dimensional complex space under $\CC\times W\to W$, $\mathbf{i}Y:=J(Y)$. Besides
    $$
\sigma\colon W\times W\to \CC, \qquad\sigma(Y,Z)=n(Y,Z)-\mathbf{i}n(J(Y),Z),
$$
is a nondegenerate Hermitian form.
If $H_X=\{g\in G_{\Omega_1}: g(X)=X\}$ denotes the isotropy group of $X$, the map
\begin{equation}\label{eq_SU3}
H_X\to\mathrm{SU}(W,\sigma),\quad g\mapsto g\vert_{W}
\end{equation}
is well defined and a Lie group isomorphism.   
Therefore,
$$
G_{\Omega_1}/\mathrm{SU}(3)\cong\mathbb{S}^6.
$$
\end{proposition}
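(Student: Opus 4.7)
The claim has four ingredients: that $J$ is a complex structure on $W = X^\perp$, that $\sigma$ is a nondegenerate Hermitian form on $W$, that the restriction map $g\mapsto g\vert_W$ identifies $H_X$ with $\mathrm{SU}(W,\sigma)$, and that $G_{\Omega_1}/\mathrm{SU}(3)\cong \mathbb{S}^6$. The first two are fast. Since $n(X\times Y, X) = 0$, the endomorphism $J$ sends $W$ into itself, and Eq.~\eqref{eq_mainidentity} with $n(X) = 1$ and $n(X, Y) = 0$ for $Y\in W$ gives $J^2(Y) = X\times(X\times Y) = -Y$. The skew-adjointness $n(JY, Z) = -n(Y, JZ)$ follows from $n(X\times Y, Z) = \Omega_1(X, Y, Z)$, the symmetry of $n$, and the alternation of $\Omega_1$. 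Together with the bilinearity and symmetry of $n$, this makes $\sigma$ sesquilinear and Hermitian, and $\sigma(Y, Y) = n(Y, Y) > 0$ for $Y\ne 0$ gives (positive) definiteness.

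The map $g\mapsto g\vert_W$ sends $H_X$ into $\mathrm{U}(W,\sigma)$: for $g\in H_X$, $g(W)\subset W$ since $g$ fixes $X$ and lies in $\mathrm{SO}(V, n)$ by Lemma~\ref{le_parteconexa}(c); $g\vert_W$ commutes with $J$ because $g(X\times Y) = g(X)\times g(Y) = X\times g(Y)$; and preservation of $n\vert_W$ then yields preservation of $\sigma$. Injectivity is immediate. The main obstacle is the upgrade from $\mathrm{U}(W,\sigma)$ to $\mathrm{SU}(W,\sigma)$, in both directions. My plan is to recognise $\Omega_1\vert_W$ as the real part of a complex volume form on $W$. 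Using the basis $\{X_i, Y_i\}_{i=1}^3$ with $Y_i = J(X_i)$ produced in the proof of Theorem~\ref{teo_casoR}, the formula \eqref{eq_omegandefinida1} gives
\[
\Omega_1\vert_W = \mathrm{Re}(\Psi),\qquad \Psi := (X_1^* + \mathbf{i} Y_1^*)\wedge (X_2^* + \mathbf{i} Y_2^*)\wedge (X_3^* + \mathbf{i} Y_3^*),
\]
which is the top complex form $dz_1\wedge dz_2\wedge dz_3$ in the $\CC$-coordinates $z_j = X_j^* + \mathbf{i} Y_j^*$. For any $\CC$-linear endomorphism $A$ of $(W, J)$ one has $\Psi(A Y_1, A Y_2, A Y_3) = \det_\CC(A)\,\Psi(Y_1, Y_2, Y_3)$, so if $A$ preserves $\mathrm{Re}(\Psi)$ then $\mathrm{Re}((\det_\CC A - 1)\Psi) \equiv 0$, which, using that $\mathrm{Re}(\Psi)$ and $\mathrm{Im}(\Psi)$ involve disjoint monomials in $\{X_i^*, Y_i^*\}$ and are therefore $\RR$-linearly independent, forces $\det_\CC A = 1$. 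Hence $g\vert_W \in \mathrm{SU}(W,\sigma)$.

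For surjectivity, given $A\in\mathrm{SU}(W,\sigma)$ define $g$ by $g(X) = X$ and $g\vert_W = A$; then $g\in\mathrm{SO}(V, n)$, and preservation of $\Omega_1$ reduces, by $V = \RR X\oplus W$ and the alternation of $\Omega_1$, to two cases: $\Omega_1(X, Y, Z) = n(JY, Z)$ for $Y, Z\in W$, invariant because $A$ is $\CC$-linear and $\sigma$-unitary; and $\Omega_1(Y_1, Y_2, Y_3) = \mathrm{Re}(\Psi(Y_1, Y_2, Y_3))$ for $Y_i\in W$, invariant because $\det_\CC A = 1$. Finally, the transitivity of $G_{\Omega_1}$ on $\mathbb{S}^6$ was already established in the discussion preceding the proposition, so the standard orbit-stabiliser result (cf.\ Remark~\ref{re_orbitas}) gives $G_{\Omega_1}/H_X \cong \mathbb{S}^6$, which together with the isomorphism $H_X\cong\mathrm{SU}(3)$ completes the proof.
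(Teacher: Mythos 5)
Your proof is correct, and it departs from the paper's in the two most substantial steps. For the conclusion $\det g\vert_W = 1$, the paper deliberately complexifies: it passes to $V^\CC$, notes that $\ker(F-\id)$ for $F=\mathbf{i}f^\CC$ is a $\CC$-invariant subspace on which $\Omega_1^\CC$ restricts to a nonzero complex $3$-form, and reads the determinant off there; the footnote in the paper even explains that this detour is taken because $\Omega_1\vert_W$ fails to be $\CC$-linear for $J$. You sidestep the complexification entirely by observing that $\Omega_1\vert_W = \mathrm{Re}(\Psi)$ with $\Psi = z_1\wedge z_2\wedge z_3$ genuinely $\CC$-trilinear, and then exploit the $\RR$-linear independence of $\mathrm{Re}(\Psi)$ and $\mathrm{Im}(\Psi)$ (they live on disjoint monomials) to force $\det_\CC(g\vert_W)=1$. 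This is essentially the same complex-volume-form idea in a different packaging, but it stays on $V$ and avoids the eigenspace bookkeeping, so it is a mild but genuine simplification. For surjectivity, the paper uses the inequality $\dim H_X\ge 8$ supplied by Remark~\ref{re_cotasdimensiones}, and then the injective homomorphism into $\mathrm{SU}(3)$ must be onto by dimension; you instead give a direct verification that extending any $A\in\mathrm{SU}(W,\sigma)$ to $V=\RR X\oplus W$ by fixing $X$ lands in $G_{\Omega_1}$, checking preservation of $\Omega_1$ on the two nontrivial configurations ($X$ with two $W$-vectors, and three $W$-vectors) from $\CC$-linearity, $\sigma$-unitarity and $\det_\CC A=1$. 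The paper's route is shorter given the remark; yours is self-contained and exhibits the inverse explicitly, which is a virtue if one wants to reuse the isomorphism concretely. The remaining parts (the Hermitian form checks and the orbit--stabilizer argument) match the paper essentially verbatim.
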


The notation $J$ in Eq.~\eqref{eq_laJ} reminds the usual complex structure in Figure~\ref{fig_e8}.
 
\begin{proof}
The map $\sigma$ is $\RR$-linear as $J$ is and $n$ is bilinear. Besides $\sigma$ is $\CC$-linear in the first variable,
$\sigma(\mathbf{i}Y_1,Y_2)=\sigma(J(Y_1),Y_2)=n(J(Y_1),Y_2)-\mathbf{i}n(J^2(Y_1),Y_2)=\mathbf{i}\sigma(Y_1,Y_2)$, since $J$ is an skew-adjoint linear map such that $J^2=-\id$. (Recall that $J$ is our map $f$ in the proof of  Theorem~\ref{teo_casoR}.)
Also, 
$$
\overline{\sigma(Y_1,Y_2)}=n(Y_1,Y_2)+\mathbf{i}n(J(Y_1),Y_2)=n(Y_1,Y_2)-\mathbf{i}n(J(Y_2),Y_1)=\sigma(Y_2,Y_1).
$$
And $\sigma(Y,Y)=n(Y)$ is a real positive number, different from $0$ if $Y\ne0$. Besides, the nondegeneracy of $n$ implies the one of $\sigma$.\smallskip

Now, if $g\in H_X$, then $g\colon V\to V$ is determined by its restriction $g\vert_W$, which proves the injectivity of the map in Eq.~\eqref{eq_SU3}. The facts    $n(g(Y_1),g(Y_2))=n(Y_1,Y_2) $ and $gJ=Jg$ lead to $\sigma(g(Y_1),g(Y_2))=\sigma(Y_1,Y_2) $, so that  $g\vert_W\in \mathrm{U}(W,\sigma)$.

In order to prove that $\det g\vert_{W}=1$, recall that an element in $\GL_3(\FF)$ (for $\FF$   either $\CC$ or $\RR$)
preserving an alternating nonzero $3$-form in $\wedge^3(\FF^3)^*$ has determinant $1$. In our case, we can choose 
$\{X_1,X_2,X_3,Y_1,Y_2,Y_3\}$ an $\RR$-basis of $X^\perp$ such that $X_3=X_1\times X_2$ and $Y_i=X\times X_i=f(X_i)$. Then $\cB_0=\{X_1,X_2,X_3\}$ is a $\CC$-basis of $W$ and $\cB_0'=\{X_1+\mathbf{i}Y_1,X_2+\mathbf{i}Y_2,X_3+\mathbf{i}Y_3\}$ is   a $\CC$-basis of $\ker(F-\id)$ for $F\colon V^\CC\to V^\CC$ given by $F(Y)=\mathbf{i}X\times Y$.
Of course $g^\CC$ preserves this eigenspace since it commutes with $F=\mathbf{i}f^\CC$.
As $\Omega_1^\CC$ is a nonzero alternating form on $\ker(F-\id)$, 
and the matrix of $g\vert_{W}$ relative to $\cB_0$ coincides with the matrix of $g^\CC\vert_{\ker(F-\id)}$ relative to $\cB_0'$, then $\det g\vert_{W}=1$.\footnote{We have dealt with the complexification because $\Omega_1\colon W\times W\times W\to\RR\subset\CC$ is not $\CC$-linear, and if we consider the restriction to $\Omega_1\colon W'\times W'\times W'\to\RR$ for $W'$ the real vector space spanned by $\{ X_1,X_2,X_3\}$, then $\Omega_1$ is an alternating nonzero 3-form, but $g(W')\not\subset W'$.}

\smallskip

So \eqref{eq_SU3} provides a monomorphism of groups and hence $\dim H_X\le\dim \mathrm{SU}(3)=8$. According to Remark~\ref{re_cotasdimensiones}, $\dim H_X\ge8$, what implies  $\dim H_X=8$ and the map \eqref{eq_SU3} is also surjective.
The transitivity of the action gives then $G_{\Omega_1}/\textrm{SU}(W,\sigma)\cong G_{\Omega_1}\cdot X=\mathbb{S}^6$.
  \end{proof}
  \smallskip

Recall that topological properties of $H$, $G$ and $G/H$ are closely related in homogeneous spaces. Thus, 
as a corollary of the above proposition, $G_{\Omega_1}$ is compact and connected, due to \cite[Lemma~11.18]{ONeill}, and simply connected \cite[Proposition~11.17]{ONeill}, because both $\mathrm{SU}(3)$ and $\mathbb{S}^6$ are compact, connected  and  simply connected.

Hence  we will use the notation $G_{2,-14}$ for the connected compact Lie group $G_{\Omega_1}$ and $G_{2,2}$ for the   (real) split group $G_{\Omega_0}$ (also connected as a consequence of Section~\ref{se_casosplit}), notation provided by the signature of the Killing form of the related Lie algebra. (When there is no ambiguity, we will use the notation $G_2$ for $G_{\Omega_0}^+$ the complex connected group of type $G_2$,     and $\frg_2$ for its Lie algebra $L_\CC$.)
Similarly,  $\frg_c$ deserves the name $\frg_{2,-14}$, as we were using for $L_\RR$   the name $\frg_{2,2}$.\smallskip

 \begin{remark}\label{re_ehr}
 In fact, $\mathbb{S}^6$ is a \emph{reductive} homogeneous space, that is, if
  $\frh=\{d\in\frg_c: d(X)=0\}$ is the Lie algebra of the isotropy group $H_X$, which is a compact Lie algebra isomorphic to the skew-Hermitian matrices $\frsu(W,\sigma)\cong\frsu(3)$, then there is a complementary subspace $ \mathfrak{m}$ such that
 $$
  \mathfrak{g}_c=\mathfrak{h}\oplus  \mathfrak{m},\qquad [ \mathfrak{h}, \mathfrak{m}]\subset  \mathfrak{m}.
 $$
Indeed, we can take $ \mathfrak{m}:=\{\varphi_Y:Y\in X^\perp\}\subset\frgl(V)$, for 
 \begin{equation}\label{eq_losfi}
 \varphi_Y(Z):=X\times (Y\times Z)-Y\times (X\times Z)+(X\times  Y)\times Z,
 \end{equation}
 and then check that $\varphi_Y\in\Der(V,\times)=\frg_c$ (see \cite[Eq.~(2.11)]{s6Alb}).
 The fact $ \varphi_Y(X)=2Y$ implies that $\mathfrak{m}$ is isomorphic to the vector space $X^\perp=W$ and that $\mathfrak{m}\cap\frh=0$. By dimension count, $\mathfrak{g}_c=\mathfrak{h}\oplus  \mathfrak{m}$. 
 
Moreover, this complementary invariant subspace $\mathfrak{m}$ is unique (in this example), coinciding with the subspace orthogonal to $\mathfrak{h}$ relative to the Killing form of $\mathfrak{g}_c$, according to \cite[Proposition 2.3]{s6Alb}.

In general, the tangent space to the orbit, $\mathbb S^6$ in this case, is identified with $\frg_c/\frh$, but due to the fact the homogeneous space is reductive, the identification becomes $T_X\mathbb S^6\cong \frg_c/\frh \cong\mathfrak{m}\cong X^\perp$, which besides coincides with the usual identification $T_X\mathbb S^6 \cong X^\perp$
inside $\RR^7$. This   allows to pass algebraical properties of $(\mathfrak m, [\ ,\ ]_{\mathfrak m})$ to geometrical ones, where $[\ ,\ ]_{\mathfrak m}$ denotes the projection on $\mathfrak m$ of the Lie bracket. As
$$
 [\varphi_{Y_1},\varphi_{Y_2}]_{\mathfrak m}=\varphi_{\mathrm{pr}_{W}(Y_1Y_2-Y_2Y_1)},
$$
the (new nonassociative) algebra 
  $(\mathfrak m, [\ ,\ ]_{\mathfrak m})$  is isomorphic to $(W,*)$, for $Y_1*Y_2= \mathrm{pr}_{W}(Y_1Y_2-Y_2Y_1)$, which turns out to be the \emph{compact vector color algebra} whose complexification is the vector color algebra originally introduced by Domokos and K\"ovesi-Domokos in the study of color symmetries arising from the Gell-Mann quark model (references in \cite{s6Alb}).   
  
Next we try to express the commutator, projected down to $\mathfrak{m}$, without passing through the identification with $W$. If the chosen identification   is $W\to\mathfrak{m}\le\frg_c$, $Y\mapsto \varphi_{J(Y)}$, then the following identity holds
\begin{equation}\label{eq_paraIlka2}
[Y_1,Y_2]_\mathfrak{m}=2J(Y_1\times Y_2),
\end{equation}
where recall that we think of $J\colon \OO_0\to\OO_0$, $J(Y)=X\times Y$, so that $J^2$ is the opposite of the projection on $W=X^\perp$. Indeed, some easy computations show that 
$[J(Y_1),J(Y_2)]=-X[Y_1,Y_2]X=-2X(Y_1\times Y_2)X$,
so that 
$$
\begin{array}{ll}
[J(Y_1),J(Y_2)]_W&=-2(X(Y_1\times Y_2)X)_W=-2X(Y_1\times Y_2)_WX\\
&=2X^2(Y_1\times Y_2)_W=-2(Y_1\times Y_2)_W=2J^2(Y_1\times Y_2)
,\end{array}
$$
since the elements in $W$ anticommute with $X$.
Then,
$$
[\varphi_{J(Y_1)},\varphi_{J(Y_2)}]_\mathfrak m=\varphi_{[J(Y_1),J(Y_2)]_W}=\varphi_{J(2J(Y_1\times Y_2))},
$$ which obviously means \eqref{eq_paraIlka2} through the identification.\smallskip
 
  More geometrically,  $\mathbb{S}^6$ is a \emph{naturally reductive} $G_{2,-14}$-homogeneous space, that is, the canonical 
connection is compatible with the metric, 
$$\langle [\varphi_{Y_1} ,\varphi_{Y_2} ]_{\mathfrak m},\varphi_{Y_3}\rangle +\langle \varphi_{Y_2},[\varphi_{Y_1} ,\varphi_{Y_3} ]_{\mathfrak m}\rangle =0$$
 for all $Y_1,Y_2,Y_3\in W$. This is a direct consequence of the fact that  $\langle \ ,\ \rangle\colon\mathfrak m\times\mathfrak m\to\RR$, the restriction of the Killing form to $\mathfrak m$, corresponds through the above identification\footnote{
 With both the above identifications, $Y\mapsto \varphi_{J(Y)}$ and $Y\mapsto \varphi_{Y}$, since $n(J(Y_1),J(Y_2))=n(Y_1,Y_2)$.
 } between  $\mathfrak m$ and $W$ with the restriction of the octonionic norm to $W$, up to scalar multiple. 
 Besides, $n(J(Y),Y_3)=n(-Y\times X, Y_3)=n(X,Y\times Y_3)$ gives, in particular, that
\begin{equation}\label{eq_paraIlka}
n(J(Y_1\times Y_2),Y_3)=n(X,(Y_1\times Y_2)\times Y_3),
\end{equation}
which provides useful information on the torsion 3-form of the canonical connection \cite[Proposition~4.2]{IlkaMAM}.

 \end{remark}

 \subsection{A model of compact $G_2$}\label{se_modelocompacto}

  The reductive decomposition of $\mathfrak g_c$ in the above remark 
  suggests the 
    following linear model of the compact Lie algebra of type $G_2$ and of its natural module. \smallskip
 
 \begin{proposition}\label{pr_modelocompacto}
 Take   $W=\CC^3$ and  $\sigma\colon W\times W\to\CC$   the Hermitian nondegenerate form
 given by $\sigma(u,v)=\sum_{i=1}^3 u_i\bar v_i$.  
 Recall that the unitary algebra $\fru(W)$ is spanned by the maps $\sigma_{u,v}$, where $\sigma_{u,v}(w) :=
\sigma(w, u)v -\sigma(w, v)u$ for any $u,v, w\in W$.
 Then the vector space 
 $$
 \mathcal L=\frsu(W)\oplus W,
 $$
 with anticommutative product given by 
 \begin{itemize}
 \item[$\RIGHTcircle$] $\frsu(W)$ is subalgebra,
 \item[$\RIGHTcircle$] $[\phi, u] = \phi(u)$ for any $\phi\in\frsu(W)$ and $u \in W$,
 \item[$\RIGHTcircle$] $[u, v] = (3\sigma_{u,v}-\tr(\sigma_{u,v} )\id_W)+ 2\overline{u \times v}$ for any $u, v \in W$,
 \end{itemize}
for $\times$ the usual vector product in $W=\CC^3$, is a compact Lie algebra of type  $G_2$  isomorphic to $\frg_c$.
(Also $\tr(\sigma_{u,v} )=\sigma(  v,u)-\sigma(u, v)=-2\mathbf{i}\,\mathrm{Im}\, \sigma(u,v)$.)\smallskip

Besides $\RR^7\equiv \RR\oplus W=:\mathcal V$ is an irreducible $\mathcal L$-module for the action $\cdot$ given by 
 \begin{itemize}
  \item[$\LEFTcircle$] For any $\phi\in\frsu(W)$, and $u \in W$,
  $$\phi \cdot1=0,\qquad \phi \cdot u=\phi(u);$$
 \item[$\LEFTcircle$]
  For any $u, v \in W$,
 $$u \cdot1=- 2\mathbf{i}u,\qquad u \cdot v=-2\mathrm{Im}\,\sigma(u,v)- \,\overline{u \times v}.$$
 \end{itemize} \smallskip
 
 Finally, the 3-form on $\mathcal V$ given by
 $$
 \Omega(s+u,t+v,r+w)=-\mathrm{Im}(s\sigma(v,w)+t\sigma(w,u)+r\sigma(u,v))+\mathrm{Re}(\det(u,v,w))
 $$
 for any $r,s,t\in\mathbb R$, $u,v,w\in W$,
 is generic, and $\mathcal L=\Der(\mathcal V,\Omega)\subset\mathfrak{so}(\mathcal V,n)$, for the positive definite  norm   
 $$
 n(s+u)=s^2+\sigma(u,u).
 $$
 The related cross product on $\mathcal V$ is given by
 $$
 (s+u)\times (t+v)=- \mathrm{Im}\,\sigma(u,v)+\big(\mathbf{i}sv-\mathbf{i}tu+\overline{u \times v}\big).  
 $$
 \end{proposition}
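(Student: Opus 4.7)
\emph{Plan.} My strategy is to realize $\mathcal L=\frsu(W)\oplus W$ as the reductive decomposition $\frh\oplus\mathfrak m$ of $\frg_c$ exhibited in Remark~\ref{re_ehr}, and then to derive (b) and (c) by transporting the natural $\frg_c$-action on $\RR^7$ and its associated $3$-form. Fix a unit vector $X_0\in\RR^7$, set $W'=X_0^\perp$, and equip $W'$ with the complex structure $J(Y)=X_0\times Y$ and Hermitian form $\sigma'$ of Proposition~\ref{le_elsigma}. Choose an $\RR$-linear isomorphism $\psi\colon W\to W'$ matching $\sigma$ with $\sigma'$ (possibly after complex conjugation); then $\frh$ gets identified with $\frsu(W,\sigma)$ and $\mathfrak m=\{\varphi_Y:Y\in W'\}$ (recall Eq.~\eqref{eq_losfi}) with $W$ via $u\mapsto\varphi_{\psi(u)}$. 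The normalization of $\psi$ is forced by combining $\varphi_Y(X_0)=2Y$ with the target relation $u\cdot 1=-2\mathbf i u$; the appearance of $\overline{u\times v}$ (rather than $u\times v$) in the bracket further forces $\psi$ to be conjugate-linear.

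With these identifications, part (a) reduces to three bracket checks. The relation $[\frsu(W),\frsu(W)]\subset\frsu(W)$ is automatic, and $[\phi,u]=\phi(u)$ is simply the natural $\frh$-action on $\mathfrak m\cong W'$. For $[u,v]$, its $\mathfrak m$-component is $2J(\psi(u)\times_{\RR^7}\psi(v))$ by Eq.~\eqref{eq_paraIlka2}, which through $\psi$ becomes $2\overline{u\times v}$; its $\frh$-component is computed from Eq.~\eqref{eq_losfi}, and the triple-product expansion for the $7$-dimensional cross product collapses the three terms into the traceless skew-Hermitian operator $3\sigma_{u,v}-\tr(\sigma_{u,v})\id_W$. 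Type $G_2$ and compactness then follow at once from $\mathcal L\cong\frg_c$.

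For (b), identify $\mathcal V=\RR\oplus W$ with $\RR X_0\oplus W'=\RR^7$ via $1\mapsto X_0$, $u\mapsto\psi(u)$, and transport the $\frg_c$-action: $\phi\in\frsu(W)$ fixes $X_0$ and acts by $\phi$ on $W'$; $u\in W$ acts on $X_0$ as $\varphi_{\psi(u)}(X_0)=2\psi(u)=-2\mathbf i u$, and on $\psi(v)$ by Eq.~\eqref{eq_losfi}, whose three summands, evaluated using $X_0\times\psi(u)=J(\psi(u))$ and the triple-product expansion, reassemble into $-2\,\mathrm{Im}\,\sigma(u,v)\cdot 1-\overline{u\times v}$. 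Irreducibility of $\mathcal V$ is immediate from transitivity of $G_{2,-14}$ on $\mathbb S^6$. Part (c) is then a direct transport: $n(s+u)=s^2+\sigma(u,u)$ is the transported positive-definite norm on $\mathcal V$; the cross-product formula follows by writing out the octonion product on $\RR\oplus W'$ in transported coordinates (the key point being that for $Y,Z\in W'$ the scalar part of $Y\times_{\RR^7}Z$ is $-\mathrm{Im}\,\sigma(u,v)$ and the $W'$-part is $\overline{u\times v}$); the $3$-form $\Omega(U,V,T)=n(U\times V,T)$ matches the given formula via the identity $\sigma(\overline{u\times v},w)=\overline{\det(u,v,w)}$; its genericity follows from Theorem~\ref{teo_casoR} since its associated bilinear form is $n$, positive definite; and $\mathcal L=\Der(\mathcal V,\Omega)$ is Proposition~\ref{pr_enson} transported.

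The principal obstacle is the bookkeeping of signs and conjugation conventions: the sign of $J$ versus multiplication by $\mathbf i$, the normalization of $\psi$, and the crucial appearance of $\overline{u\times v}$ which forces $\psi$ to be conjugate-linear rather than $\CC$-linear. All of these are pinned down by imposing the two relations $\varphi_Y(X_0)=2Y$ and $u\cdot 1=-2\mathbf i u$; once $\psi$ is fixed, every remaining identity is obtained by a direct, if slightly tedious, computation from the octonion multiplication table and the triple-product expansion for the $7$-dimensional cross product.
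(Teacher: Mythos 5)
Your strategy coincides with the paper's: realize $\mathcal L$ as the reductive decomposition $\Der(\OO)=\frh\oplus\mathfrak m$ with $\frh=\{d\in\Der(\OO):d(e_7)=0\}\cong\frsu(W)$ and $\mathfrak m=\{\varphi_Y\}$, and then transport the natural $\frg_c$-representation, the cross product and the $3$-form. The paper does this by rewriting the octonion product as $(\alpha+u)(\beta+v)=(\alpha\beta-\sigma(u,v))+(\alpha v+\overline\beta u+\overline{u\times v})$ on $\CC\oplus W$, taking the tautological identification $\mathcal V=\RR\oplus W\equiv\RR e_7\oplus W\subset\OO_0$, and the explicit isomorphism $\Der(\OO)\to\mathcal L$ given by $d\mapsto d|_W$ on $\frh$ and $\varphi_u\mapsto\mathbf i u$ on $\mathfrak m$, with the bracket computations delegated to \cite{modelosg2}.

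The structural difficulty in your version is that you employ one and the same $\RR$-linear map $\psi\colon W\to W'$ both for the identification $\mathfrak m\cong W$ (via $u\mapsto\varphi_{\psi(u)}$) and for the identification $\mathcal V\cong\RR^7$ (via $1\mapsto X_0$, $v\mapsto\psi(v)$). With both identifications built from the same $\psi$, the transported action gives $u\cdot 1=\psi^{-1}\bigl(\varphi_{\psi(u)}(X_0)\bigr)=\psi^{-1}(2\psi(u))=2u$, which cannot equal the target $-2\mathbf i u$ for any invertible $\psi$; the two relations you want to impose simultaneously are inconsistent rather than determining. The two identifications must be allowed to differ — as in the paper, where $\mathcal V\to\RR^7$ is tautological while $W\to\mathfrak m$ carries the twist $u\mapsto\varphi_{-\mathbf i u}$. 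Relatedly, the claim that the appearance of $\overline{u\times v}$ forces $\psi$ to be conjugate-linear is not correct: the paper's map $\varphi_u\mapsto\mathbf i u$ is $\CC$-linear for the complex structure $J=L_{e_7}$, and the conjugation $\overline{u\times v}$ is already intrinsic to the octonion product once it is written on $\CC\oplus\CC^3$; it does not come from the identification. The remaining steps (transport of cross product and $3$-form, genericity from nondegeneracy of the associated norm, irreducibility of $\mathcal V$) are sound once the identification maps are set up consistently.
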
 
 
 Although this model is extracted from \cite{modelosg2}, a difficulty when reading such paper is that the considered field $k$ is not necessarily $\mathbb R$, but a ground field of arbitrary characteristic, $K$ is not necessarily $\mathbb C$, but a two dimensional
composition algebra over $k$ (even in the case $k=\mathbb R$, there are two possibilities for $K$: either $\mathbb C$  or $\mathbb R\oplus \mathbb R$) and $W$ in the paper is a free left $K$-module of rank 3  endowed with a Hermitian nondegenerate form $\sigma\colon W\times W\to K$ with trivial Hermitian  discriminant. These choices produce a model valid even in the split case (considering  $\mathbb R\oplus \mathbb R$ instead of $\mathbb C$) but for avoiding the reader to dive in such general situation, we will sketch a proof adapted to our concrete setting.

 \begin{proof}
 Recall that $\OO=\RR\oplus V$ for $V=\OO_0= \RR\langle  e_i:1\le i\le 7\rangle$.
 If we identify $\CC\equiv\RR\oplus\RR e_7$, then $V=\RR e_7\oplus W $ for
 $W=\RR\langle  e_i:1\le i\le 6\rangle =\CC\langle  e_1,e_2,e_3\rangle$ and $W$ is isomorphic to $\CC^3$ in a natural way.
 This identification and the nondegenerate Hermitian form $\sigma\colon W\times W\to \CC$ given by $\sigma(u,v)=n(u,v)-n(e_7\times u,v)e_7=-\mathrm{pr}_{\RR\oplus\RR e_7}(uv)$ can  be used to write
  the product in the octonion division algebra $\OO=\CC\oplus W$ as
 \begin{equation}\label{eq_prmodelo}
 (\alpha+u)(\beta+v)=(\alpha\beta-\sigma(u,v))+\big( \alpha v+\overline{\beta} u+\overline{u\times v}),
 \end{equation}
 for $\alpha,\beta\in \CC$, $u,v\in W$, and the norm as
 $
 n(\alpha+u)=\alpha\overline{\alpha}+\sigma(u,u).
 $
 Now recall the decomposition $\Der(\OO)=\{d\in\Der(\OO): d(e_7)=0\}\oplus\{\varphi_u:u\in W\}$, where the derivations $\varphi_u$ are defined in Eq.~\eqref{eq_losfi}. If $d(e_7)=0$, then $d(W)\subset W$ and moreover $\sigma(d(u),v)+\sigma(u,d(v))=0$, so that we can consider the vector space isomorphism
 $$\begin{array}{rcl}
 \Der(\OO)&\longrightarrow&\frsu(W)\oplus W=:\mathcal{L}\\
  d&\mapsto&d\vert_W\in \frsu(W)\\
  \varphi_u&\mapsto & \mathbf{i}u.
  \end{array}
  $$
  This induces on $\mathcal{L}$ the Lie algebra structure given in the statement of this proposition. Computations can be extracted from  \cite[Theorem~5.3]{modelosg2}, where the correspondence with the notation in \cite{modelosg2} is $\varphi_u=\frac12D_{e_7,u}$ (take into account $D_{e_7,u}(e_7)=4u=2\varphi_u(e_7)$).
  
  The action of $\mathcal L$ on $\mathcal V$ (or on the octonions) is consequence of \cite[Lemma~5.1]{modelosg2}, passing through the above isomorphism, taking care of the fact that we have decided to omit $e_7$ in the final expression.
  
  The  cross product is immediate from Eq.~\eqref{eq_prmodelo}, since $\times =\frac12[\ ,\ ]$ in $\mathcal V$.
   The 3-form is simply obtained by taking $\Omega(s+u,t+v,r+w)=n((s+u)\times(t+v),r+w)$, again with the caution of the $e_7$-omission. 
 \end{proof}
 
 

 \subsection{Parenthesis on real forms of $\frg_2$}\label{se_solodos}
 
 It is a well known fact of Lie theory that there  
 are just two real forms of type $G_2$ up to isomorphism, namely,  $\frg_{2,-14}$ and  $\frg_{2,2}$.
  We would like to arrive at this conclusion without using strong machinery of Lie theory but only our tools of 3-forms, although we will add some comments about more usual techniques too. 
 
 First we  come back to the complex setting to prove that the stabilizer of any 3-form which contains the group $G_2$ cannot  strictly contain $G_2$. Take $V=\CC^7$, $\Omega_0$ the generic 3-form in Theorem~\ref{teo_casoC}, $\frg_2=\Der(V,\Omega_0)$ and $n=n_{\Omega_0,\cB}\colon V\times V\to\CC$ the related nondegenerate form ($\cB$ any fixed basis of $V$). Through this subsection, $\cC=\CC\oplus V$ will denote the complex octonion algebra.
  
 \begin{lemma}\label{le_solounan}
 Up to scalar multiple, $n$   is the unique   $\frg_2$-invariant symmetric bilinear map $V\times V\to\CC$.
  \end{lemma}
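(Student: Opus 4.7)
The plan is to apply Schur's lemma to the irreducible $\frg_2$-module $V$. Any $\frg_2$-invariant bilinear form $b\colon V\times V\to\CC$ corresponds bijectively to a linear map $\tilde b\colon V\to V^*$ defined by $\tilde b(x)(y)=b(x,y)$, and $\frg_2$-invariance of $b$ is equivalent to $\frg_2$-equivariance of $\tilde b$ (with respect to the dual action on $V^*$ given by $(d\cdot\varphi)(v)=-\varphi(d\cdot v)$). So it suffices to show $\dim\Hom_{\frg_2}(V,V^*)\le 1$.

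As established in Section~\ref{se:mod}, $V\equiv V(\omega_1)$ is an irreducible $\frg_2$-module; dually, so is $V^*$. Since $n$ itself is a nonzero $\frg_2$-invariant bilinear form (by Lemma~\ref{le_antisim}, or equivalently Proposition~\ref{pr_preservacross}), the associated map $\tilde n\colon V\to V^*$ is a nonzero $\frg_2$-module homomorphism between irreducibles. Its kernel and image are submodules of $V$ and $V^*$ respectively, so by irreducibility $\tilde n$ is an isomorphism; in particular $V\cong V^*$ as $\frg_2$-modules.

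Now, given any other $\frg_2$-invariant bilinear form $b$, I would form the composition $\tilde n^{-1}\circ \tilde b\colon V\to V$, which is a $\frg_2$-endomorphism of the irreducible complex module $V$. Schur's lemma over $\CC$ then yields $\tilde n^{-1}\circ \tilde b=\lambda\,\id_V$ for some $\lambda\in\CC$, whence $b=\lambda n$. Note that this argument actually proves the stronger statement that the full space of $\frg_2$-invariant bilinear forms on $V$ (not only the symmetric ones) is one-dimensional; symmetry is automatic because the generator $n$ is symmetric.

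I anticipate no serious obstacle: the whole proof hinges on irreducibility of $V$ (already proved in Section~\ref{se:mod}) and on Schur's lemma, which is available because we are over $\CC$. A more concise (but less self-contained) alternative would be to appeal directly to Remark~\ref{re_S2enwedge3}, where the decomposition $S^2V^*\equiv V(2\omega_1)\oplus V(0)$ shows that the trivial $\frg_2$-module appears with multiplicity exactly one in $S^2V^*$; this is the same statement phrased via highest weights.
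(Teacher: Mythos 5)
Your proof is correct, and it reaches the same reduction as the paper — translating the invariant bilinear form $b$ to a $\frg_2$-module homomorphism $\tilde b\colon V\to V^*$ — but finishes differently. The paper argues that $V$ is cyclic, generated by a highest-weight vector $X$ of weight $\omega_1=-\varepsilon_3$, so any module map $V\to V^*$ is pinned down by the image of $X$; since that image must sit in the weight space $(V^*)_{\omega_1}$, which is one-dimensional, the map is determined up to scalar. You instead invoke Schur's lemma twice: first to see that the nonzero map $\tilde n$ (nonzero because $n$ is nondegenerate) is an isomorphism $V\cong V^*$ of irreducibles, and then to conclude $\End_{\frg_2}(V)=\CC\,\id_V$, so $\tilde n^{-1}\circ\tilde b$ is scalar. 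Both are standard; the paper's version is a more hands-on weight-space count and avoids invoking the irreducibility of $V^*$ or the existence of $n$ as an a priori isomorphism, whereas yours is slightly more abstract, does rely on $V^*$ being irreducible (a standard consequence of irreducibility of $V$), and gives for free the small bonus that \emph{all} invariant bilinear forms, not just the symmetric ones, form a one-dimensional space. Your alternative remark via $S^2V^*=V(2\omega_1)\oplus V(0)$ is exactly what the paper notes in the parenthetical right after the proof.
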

 
 \begin{proof}
 Giving a $\frg_2$-invariant symmetric bilinear map $q\colon V\times V\to\CC$ is equivalent to giving a homomorphism of $\frg_2$-modules from $V$ to $V^*$, by means of $\tilde q\colon V\to V^*$, $\tilde q(X)\in V^*$, $\tilde q(X)(Y)=q(X,Y)$.
 Using the notations in Section~\ref{se:modelog2}, the module $V$ is 
   $\frg_2$-irreducible generated by the vector $X=\left(\begin{array}{c}
0\\0\\e_3
\end{array}\right)\in V_{\omega_1=-\varepsilon_3}$ (multiplying  successively  by $L_{-\alpha_i}$, $i=1,2$, we immediately get all the weight spaces). So, a homomorphism of  $\frg_2$-modules from $V$ to $V^*$ is determined by providing the image of $X$. But the image of $X$ must also have weight $-\varepsilon_3$, so that it is determined up to scalar since 
$\dim (V^*)_{\omega_1}=\dim V_{\omega_1}=1$.  
 \end{proof}
 
 (This lemma is equivalent to the fact mentioned in Remark~\ref{re_S2enwedge3} that $S^2V^*=V(2\omega_1)\oplus V(0)$ contains only one copy of a trivial submodule type $V(0)$.)
 
 \begin{lemma}\label{le_contenerg2essergenerica}
 Any $0\ne\Omega\in\wedge^3V^*$ such that $G_{\Omega_0}^+\subset G_\Omega$ is necessarily generic.
 \end{lemma}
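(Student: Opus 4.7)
The plan is to use representation theory of $\frg_2$ acting on $V=\CC^7$. The hypothesis $G_{\Omega_0}^+\subset G_\Omega$ is equivalent, by differentiating the orbit map at the identity, to requiring that the Lie algebra $\frg_2=\mathrm{Lie}(G_{\Omega_0}^+)$ annihilates $\Omega$ under the induced action on $\wedge^3V^*$. So $\Omega$ is a $\frg_2$-invariant element of $\wedge^3V^*$.

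First I would invoke the $\frg_2$-module decomposition recalled in Remark~\ref{re_S2enwedge3}: $\wedge^3V^*\equiv V(2\omega_1)\oplus V(\omega_1)\oplus V(0)$. This exhibits exactly one copy of the trivial module, so $\dim(\wedge^3V^*)^{\frg_2}=1$. Since $\Omega_0$ is itself a nonzero $\frg_2$-invariant form (its stabilizer already contains $G_{\Omega_0}^+$), it spans this one-dimensional invariant line. Thus $\Omega=\lambda\Omega_0$ for some $\lambda\in\CC^*$. To finish, I would observe that every nonzero scalar multiple of $\Omega_0$ lies in the $\GL(V)$-orbit of $\Omega_0$: the homothety $\mu\,\id_V\in\GL(V)$ acts on $3$-forms by multiplication by $\mu^{-3}$, and every nonzero complex number is a cube. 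Hence $\Omega\in\cO_{\Omega_0}$, which by Theorem~\ref{teo_casoC} is precisely the set of generic $3$-forms.

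The main obstacle is justifying that $V(0)$ appears with multiplicity exactly one in $\wedge^3V^*$. Granted the decomposition stated in Remark~\ref{re_S2enwedge3}, there is nothing more to do; if one wished to argue directly, one could enumerate the weights of $V$ (namely $0$ and $\pm\varepsilon_i$ subject to $\varepsilon_1+\varepsilon_2+\varepsilon_3=0$), count that the weight-zero subspace of $\wedge^3V$ has dimension $5$, and combine with Weyl's complete reducibility together with the known weight-zero multiplicities of the dominant $\frg_2$-representations of dimension at most $35$ to extract the summand $V(0)$ with multiplicity one.
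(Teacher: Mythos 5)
Your proof is correct, and it takes a genuinely different route from the paper's. The paper passes through the derived symmetric bilinear form $n_{\Omega,\cB}$ of Lemma~\ref{le_lanorma}: it shows $n_{\Omega,\cB}$ is $G_{\Omega_0}^+$-invariant, invokes Lemma~\ref{le_solounan} (which proves, by a highest-weight argument, that $\frg_2$-invariant symmetric bilinear forms on $V$ form a line) to deduce $n_{\Omega,\cB}$ is proportional to $n$ and hence nondegenerate, and then appeals to the converse of Lemma~\ref{le_lanorma}\,b) via Remark~\ref{re_reciproco}. You work instead directly on $\wedge^3V^*$: you reduce to $\frg_2$-invariance, read off from the module decomposition $\wedge^3V^*\cong V(2\omega_1)\oplus V(\omega_1)\oplus V(0)$ that the space of invariant $3$-forms is one-dimensional, conclude $\Omega=\lambda\Omega_0$ with $\lambda\ne0$, and use the homothety action to place $\Omega$ in $\cO_{\Omega_0}$, which is the set of generic forms by Theorem~\ref{teo_casoC}. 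Your route is arguably more transparent — in particular, the nonvanishing of the proportionality constant is immediate from $\Omega\ne0$, whereas the paper's ``nonzero scalar multiple'' is asserted rather than argued — but it leans on the full decomposition of $\wedge^3V^*$, which the paper only states (not proves) in Remark~\ref{re_S2enwedge3}; the paper's route needs only the weaker fact that $S^2V^*$ has a unique trivial summand, which is exactly what Lemma~\ref{le_solounan} establishes. You correctly flag this dependency yourself. Both proofs work; the paper's minimizes the representation-theoretic input, yours minimizes the detours.
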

 
 \begin{proof}
 Let us see that $n_{\Omega,\cB}\colon V\times V\to\CC$ is nondegenerate, which is a sufficient condition to assure $\Omega$ generic according to  Remark~\ref{re_reciproco}.
 Note that   $n_{\Omega,\cB}\colon V\times V\to\CC$ is   $G_{\Omega}\cap\SL(V)$-invariant
by the same argument as in Lemma~\ref{le_parteconexa}b). Besides Lemma~\ref{le_parteconexa}b) tells us that  $G_{\Omega_0}^+\subset \SL(V)$, so that $n_{\Omega,\cB}\colon V\times V\to\CC$ is $G_{\Omega_0}^+$-invariant.
 By Lemma~\ref{le_solounan}, 
 $n_{\Omega,\cB}$ is a nonzero scalar multiple of $n=n_{\Omega_0,\cB}$. In particular, $n_{\Omega,\cB}$ is nondegenerate (and generic by Remark~\ref{re_reciproco}).
 \end{proof}
 
  \begin{lemma}\label{le_modulos}
If $L_1$ is  a real simple Lie algebra and $W$ is an $L_1$-irreducible (finite-dimensional) module, then one of the following possibilities happens:
\begin{itemize}
\item[a)]   $\End_{L_1}W\cong\RR$. In this case $W^\CC$ is an $L_1^\CC$-irreducible module (i.e., $W$ is absolutely irreducible);
\item[b)]   $\End_{L_1}W\cong\CC$. In this case $W^\CC$ is sum of two nonisomorphic  irreducible $L_1^\CC$-modules;

\item[c)]   $\End_{L_1}W\cong\HH$. In this case $W^\CC$ is sum of two  isomorphic  irreducible $L_1^\CC$-modules. Besides $\dim_\RR W$ is multiple of $4$.
\end{itemize}
\end{lemma}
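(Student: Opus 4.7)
The plan is to invoke Schur's lemma together with Frobenius's classification of finite-dimensional real associative division algebras, and then match each case against the structure of $\End_{L_1^\CC}(W^\CC)$ via scalar extension.

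Since $W$ is a finite-dimensional irreducible $L_1$-module, $D:=\End_{L_1}(W)$ is a finite-dimensional associative real division algebra by Schur, hence $D\in\{\RR,\CC,\HH\}$ by Frobenius; these three possibilities correspond exactly to the three cases in the statement. The divisibility claim in (c) is then immediate: the embedding $\HH\hookrightarrow\End_\RR(W)$ endows $W$ with the structure of a right $\HH$-vector space of some dimension $m$, so $\dim_\RR W=4m$.

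The key technical input for the rest is the canonical isomorphism of $\CC$-algebras
$$\End_{L_1^\CC}(W^\CC)\;\cong\;D\otimes_\RR\CC,$$
obtained by observing that any $\RR$-linear $L_1$-equivariant endomorphism of $W$ extends $\CC$-linearly to $W^\CC$ and remains $L_1^\CC$-equivariant, and that this extension together with multiplication by $\mathbf{i}$ exhausts $\End_{L_1^\CC}(W^\CC)$. Granting this, I would argue case by case. In (a), $D\otimes_\RR\CC\cong\CC$, so $W^\CC$ has only scalar equivariant endomorphisms and is irreducible by the complex Schur lemma. In (b), $D\otimes_\RR\CC\cong\CC\oplus\CC$ splits via two orthogonal primitive idempotents $e_1,e_2$; their images give a decomposition $W^\CC=U_1\oplus U_2$ with each $U_i$ irreducible (since $e_i(D\otimes_\RR\CC)e_i\cong\CC$), and $U_1\not\cong U_2$ because an isomorphism would force $\End_{L_1^\CC}(W^\CC)$ to contain a copy of $\Mat_2(\CC)$, contradicting commutativity. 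In (c), $D\otimes_\RR\CC\cong\Mat_2(\CC)$; the two primitive idempotents of $\Mat_2(\CC)$ produce $W^\CC=U_1\oplus U_2$ with $U_1,U_2$ irreducible and intertwined by the off-diagonal matrix units, hence isomorphic.

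The main obstacle I anticipate is making the scalar-extension isomorphism $\End_{L_1^\CC}(W^\CC)\cong D\otimes_\RR\CC$ fully rigorous and verifying that its primitive idempotents really do yield $L_1^\CC$-stable summands of $W^\CC$; once that bridge is in place, the remainder is routine Artin--Wedderburn module theory applied to simple and semisimple complex algebras.
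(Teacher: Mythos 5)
Your proof is correct, and it takes a genuinely different route from the paper's. The paper uses Schur plus Frobenius exactly as you do to put $D=\End_{L_1}W\in\{\RR,\CC,\HH\}$, but then it invokes only the numerical identity $\dim_\CC\End_{L_1^\CC}W^\CC=\dim_\RR D$ and compares with the count $\dim_\CC\End_{L_1^\CC}(\bigoplus s_iU_i)=\sum s_i^2$. That dimension count alone leaves an ambiguity when $\dim_\RR D=4$ (both $k=1,\,s_1=2$ and $k=4,\,s_i=1$ give sum $4$), so the paper needs a separate argument, decomposing $W^\CC$ over $L_1$ as $2W$, to rule out four nonisomorphic summands. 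Your argument upgrades the dimension identity to the $\CC$-algebra isomorphism $\End_{L_1^\CC}(W^\CC)\cong D\otimes_\RR\CC$ and then reads off the decomposition from the Artin--Wedderburn type: $\CC$, $\CC\oplus\CC$, or $\Mat_2(\CC)$. This eliminates the case $k=4$ immediately, since $\Mat_2(\CC)\not\cong\CC^4$, and it also makes the nonisomorphism of summands in case (b) transparent (an isomorphism would force $\Mat_2(\CC)\hookrightarrow\End_{L_1^\CC}(W^\CC)$, contradicting commutativity). Two small points worth making explicit if you write this up: first, the ``main obstacle'' you flag is genuine but routine---write $W^\CC=W\oplus\mathbf{i}W$; an $\RR$-linear $L_1$-equivariant map is a $2\times2$ matrix over $D$, and $\CC$-linearity (commuting with the block $\left(\begin{smallmatrix}0&-1\\1&0\end{smallmatrix}\right)$) cuts this down to $\left\{\left(\begin{smallmatrix}a&-b\\b&a\end{smallmatrix}\right):a,b\in D\right\}\cong D\otimes_\RR\CC$; second, deducing irreducibility of $e_iW^\CC$ from $e_i\End(W^\CC)e_i\cong\CC$ uses complete reducibility of $W^\CC$, which holds by Weyl's theorem since $L_1^\CC$ is semisimple. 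With those remarks filled in, your argument is complete and is, if anything, cleaner than the paper's.
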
 
 
  \begin{proof}
 On one hand, $\End_{L_1}W $ is a real division (associative) algebra, since for any $0\ne f\in \End_{L_1}W $,
 the submodule $\ker(f)$ has to be 0 and the submodule $\textrm{Im}(f)$ has to be $W$, so that $f$ is invertible. Now, the classical Frobenius theorem (1877) characterizes the finite-dimensional associative division algebras over the real numbers: they are  $\RR$, $\CC$ and $\HH$ and hence $d=\dim_{\CC}\End_{L_1^{\CC}}W^{\CC}=\dim_\RR\End_{L_1}W $ is either $1$, $2$ or $4$, respectively.
 
 On the other hand, Schur's lemma says that, for $U_1$ and $U_2$ irreducible $L_1^{\CC}$-modules, 
 $\dim_\CC\Hom_{L_1^\CC}(U_1,U_2)$ is $1$ if $U_1$ and  $U_2$ are isomorphic and it is $0$ otherwise. Consequently, if
 $U\cong s_1U_1\oplus\dots\oplus s_kU_k$ is the decomposition of an arbitrary $L_1^{\CC}$-module $U$ as a sum of irreducible submodules  with $U_i$ not isomorphic to $U_j$ if $i\ne j$, then $\dim_\CC\End_{L_1^\CC}U=\sum_{i=1}^ks_i^2$. Now, we apply this to the $L_1^{\CC}$-module $U=W^{\CC}$.
 \begin{itemize}
\item[a)]  If $d=1$, then $k=1=s_1$ and  $W^\CC$ is an $L_1^\CC$-irreducible module.
\item[b)] If $d=2$, then $k=2$ and $s_1=s_2=1$. Hence  $W^\CC$ is sum of two nonisomorphic  irreducible $L_1^\CC$-modules.
\item[c)]  If $d=4$, then either $k=1$ and $s_1=2$ or $k=4$ and $s_i=1$ for all $i=1,\dots,4$. We want to rule out the second possibility: it suffices to realize that $W^\CC\cong 2W$ is the decomposition of $W^\CC$ as a sum of irreducible $L_1$-modules,  so that it is not possible $W^\CC\cong\sum_{i=1}^4U_i$ as $L_1^{\CC}$-module because in particular when considering as $L_1$-module there would be at least $4$ irreducible summands instead of just $2$. Hence we have proved that $U\cong 2U_1$  is sum of two  isomorphic  irreducible $L_1^\CC$-modules. 

Finally, if $\psi\colon\End_{L_1}W\to\HH$ is the isomorphism, then an action $\HH\times W\to W$ is given by $(q,w)\mapsto \psi^{-1}(q)(w)$, so   $W$ can be endowed with   a quaternionic vector space structure, what in particular implies that $\dim_\RR W$ is multiple of $4$.
\end{itemize}
 \end{proof}

  \begin{proposition}\label{pr_elmoduloreal}
Let $L_1$ be  a real Lie algebra such that $L_1^\CC=\Der(\cC)$. Then there exists $V_1$ an $L_1$-module such that $V_1^\CC=V$.
 \end{proposition}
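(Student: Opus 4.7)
The plan is to realize $V_1$ inside $V$ itself, viewed as a real vector space. Write $\tilde V$ for $V=\CC^7$ regarded as a real $L_1$-module of real dimension $14$, and let $J\colon\tilde V\to\tilde V$ denote multiplication by $\mathbf i$; this is real-linear with $J^2=-\id$, and it commutes with the $L_1$-action because $L_1$ acts $\CC$-linearly on $V$. The driving observation is that any real $L_1$-submodule of $\tilde V$ that is also $J$-stable is closed under $L_1\oplus\mathbf iL_1=L_1^\CC$, hence is a complex $L_1^\CC$-submodule of $V$, so by irreducibility it is $0$ or $V$. Granted a proper nonzero real $L_1$-submodule $V_1\subset\tilde V$, the subspaces $V_1+JV_1$ and $V_1\cap JV_1$ are both $J$-stable real $L_1$-submodules, forcing the first to equal $V$ and the second to equal $0$. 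This gives $V=V_1\oplus\mathbf iV_1$, so $\dim_\RR V_1=7$, and the obvious $\CC$-linear $L_1^\CC$-equivariant map $V_1^\CC\to V$, $v\otimes z\mapsto zv$, is surjective, hence an isomorphism by dimension count, yielding $V_1^\CC\cong V$.

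The real task is therefore to produce such a proper nonzero real submodule, equivalently to rule out that $\tilde V$ is irreducible as a real $L_1$-module. I would do this by computing $\tilde V^\CC=\tilde V\otimes_\RR\CC$. The operator $J\otimes 1$ is $L_1^\CC$-equivariant and squares to $-\id$, so $\tilde V^\CC$ splits as $V^+\oplus V^-$, the $\pm\mathbf i$-eigenspaces. A direct check shows that the map $v\mapsto v\otimes 1-(\mathbf iv)\otimes\mathbf i$ is a $\CC$-linear $L_1^\CC$-equivariant isomorphism $V\to V^+$, and a similar map identifies $V^-$ with the complex-conjugate $L_1^\CC$-module $\bar V$. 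Since $V(\omega_1)=V$ is the unique $7$-dimensional complex irreducible module of $L_1^\CC\cong\frg_2$ (see Section~\ref{se:mod}), $\bar V\cong V$, so $\tilde V^\CC\cong V\oplus V$ as $L_1^\CC$-modules.

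Now if $\tilde V$ were irreducible as a real $L_1$-module, Lemma~\ref{le_modulos} would place it in case (c): its complexification is the sum of two isomorphic complex irreducibles, forcing $4\mid\dim_\RR\tilde V$. But $\dim_\RR\tilde V=14$, a contradiction. Hence $\tilde V$ admits a proper nonzero real $L_1$-submodule, and the construction of the first paragraph delivers $V_1$.

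The main obstacle is the passage from ``$\tilde V^\CC$ is twice an irreducible'' to ``$\tilde V$ itself is reducible''; this is where one really needs the real analogue of Schur's lemma encoded in Lemma~\ref{le_modulos}, ultimately Frobenius' classification of finite-dimensional real division algebras. The parity observation that $14$ is not divisible by $4$ is precisely what rules out a quaternionic $L_1$-structure on $V$ and explains, at the level of representation theory, why a real form of $V$ survives over \emph{every} real form of $\frg_2$.
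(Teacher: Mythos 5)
Your proof is correct and follows essentially the same strategy as the paper: show that $V\otimes_\RR\CC\cong 2V(\omega_1)$ as $L_1^\CC$-modules, invoke Lemma~\ref{le_modulos} together with $4\nmid 14$ to rule out that $V$ is irreducible as a real $L_1$-module, and then extract the real form $V_1$ from a proper submodule. The paper identifies $V^\CC\cong 2V(\omega_1)$ by a direct weight computation with $h\in\frh$ and then takes $V_1$ irreducible, while you use the $\pm\mathbf{i}$-eigenspace decomposition of $J\otimes 1$ together with self-conjugacy of $V(\omega_1)$, and make the conclusion $V=V_1\oplus\mathbf{i}V_1$ explicit via $J$-stability of $V_1+JV_1$ and $V_1\cap JV_1$ --- a somewhat more detailed but equivalent route.
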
 
 
 \begin{proof}
  We can consider $V=\CC^7$ as $L_1$-module of (real) dimension 14. Now the complexified module $V^\CC=V\otimes_\RR\CC$ (of complex dimension 14) is not an irreducible $L_1^\CC$-module but sum of two isomorphic modules.
  Indeed, since the arbitrary element $h=\diag\{0,s_1,s_2,s_3,-s_1,-s_2,-s_3\}\in\frh$ acts on both $\left(\begin{array}{c}0\\0\\e_3\end{array}\right)\otimes 1$ and 
  $\left(\begin{array}{c}0\\0\\e_3\end{array}\right)\otimes \mathbf{i}$ with the same weight $-s_3=-\varepsilon_3(h)=\omega_1(h)$, it follows that 
  $V^\CC\cong2V(\omega_1)$ is the decomposition of  $V^\CC $ as a sum of irreducible $L_1^\CC$-submodules. This means that none of the situations a), b) and c) in Lemma~\ref{le_modulos}
is satisfied (the third possibility since $14$ is not multiple of $4$), in other words, $V$ is not $L_1$-irreducible. Take $V_1$ an irreducible $L_1$-submodule of $V$. As $V_1^\CC$ is a proper submodule of $V^\CC\cong 2V(\omega_1)$, this ensures that  $V_1^\CC\cong V(\omega_1)$.
 \end{proof}
 
  
 That is, $V=\RR^7$ is a real representation  of any real form of $\mathfrak g_2$.
 Note that this means that not only $\frg_{2,2}\subset\frgl_7(\RR)$, but also $\frg_{2,-14}$  is a subalgebra of $\frgl_7(\RR)$, as we have already emphasized in Proposition~\ref{pr_modelocompacto}.

\begin{proposition}\label{pr_finalmentedos}
There are  only two real forms of $\Der(\cC)$ up to isomorphism; namely, $\Der(\RR^7,\Omega_0)\cong\frg_{2,2}$ and
$\Der(\RR^7,\Omega_1)\cong\frg_{2,-14}$.
 \end{proposition}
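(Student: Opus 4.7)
The plan is to leverage the orbit classification of Theorem~\ref{teo_casoR}. Since $\Der(\RR^7,\Omega_0)=\frg_{2,2}$ and $\Der(\RR^7,\Omega_1)=\frg_{2,-14}$ have Killing forms of signatures $2$ and $-14$ respectively, they are certainly two nonisomorphic real forms of $\Der(\cC)$. What remains is to show that any real form $L_1$ of $\Der(\cC)$ is isomorphic to one of them.

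By Proposition~\ref{pr_elmoduloreal} there exists a $7$-dimensional real $L_1$-module $V_1$ with $V_1^\CC\cong V=\CC^7$. Because $L_1$ is simple and $V_1$ is nontrivial, this representation is faithful, so $L_1\hookrightarrow\frgl(V_1)\cong\frgl_7(\RR)$.

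Next I would produce a nonzero $L_1$-invariant $3$-form on $V_1$. Since passing to invariants commutes with complexification,
\[
(\wedge^3V_1^*)^{L_1}\otimes_\RR\CC\;\cong\;(\wedge^3V^*)^{L_1^\CC}\;=\;(\wedge^3V^*)^{\frg_2},
\]
and Remark~\ref{re_S2enwedge3} identifies this last space as the one-dimensional line $\CC\Omega_0$. Hence $(\wedge^3V_1^*)^{L_1}$ is one-dimensional over $\RR$; pick any nonzero $\Omega$ in it. Its complexification $\Omega^\CC$ is then a nonzero scalar multiple of $\Omega_0$, so $\Omega^\CC$ is generic and $\dim_\CC G_{\Omega^\CC}=14$. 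Since $(G_\Omega)^\CC=G_{\Omega^\CC}$, the real group $G_\Omega$ also has dimension $14$, so $\Omega$ itself is a generic real $3$-form on $V_1$. By Theorem~\ref{teo_casoR}, $\Omega$ lies in the $\GL_7(\RR)$-orbit of either $\Omega_0$ or $\Omega_1$, and the corresponding change of basis carries $\Der(V_1,\Omega)$ isomorphically onto $\frg_{2,2}$ or $\frg_{2,-14}$. The $L_1$-invariance of $\Omega$ gives $L_1\subseteq\Der(V_1,\Omega)$; both Lie algebras have real dimension $14$, so the inclusion is an equality and $L_1$ is isomorphic to $\frg_{2,2}$ or to $\frg_{2,-14}$.

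The only step that is not a direct dimension count is the construction of $\Omega$, and the key ingredient there is Remark~\ref{re_S2enwedge3}: the decomposition $\wedge^3V^*\cong V(2\omega_1)\oplus V(\omega_1)\oplus V(0)$ provides a unique trivial $\frg_2$-summand, which in turn forces the existence of a nonzero $L_1$-invariant real $3$-form on $V_1$. Once this is in hand, the generic nature of $\Omega$ is automatic (it is a nonzero multiple of $\Omega_0$ after complexification) and everything else reduces to invoking Theorem~\ref{teo_casoR}.
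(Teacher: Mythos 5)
Your proof is correct and follows the same global strategy as the paper: produce a $7$-dimensional real $L_1$-module $V_1$ with $V_1^\CC=V$ via Proposition~\ref{pr_elmoduloreal}, exhibit a nonzero $L_1$-invariant generic $3$-form on $V_1$, and then invoke Theorem~\ref{teo_casoR} together with a dimension count. Where you diverge is in producing the invariant $3$-form: the paper restricts the complex form $\Omega_0$ to $V_1$, observes that $\mathrm{Re}(\Omega_0\vert_{V_1})$ or $\mathrm{Im}(\Omega_0\vert_{V_1})$ is nonzero and $L_1$-invariant, and then appeals to Lemma~\ref{le_contenerg2essergenerica} (hence ultimately to Lemma~\ref{le_solounan} on the uniqueness of the invariant quadratic form) to show the form is generic. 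You instead use that taking invariants commutes with complexification together with the multiplicity-one statement $(\wedge^3V^*)^{\frg_2}=\CC\Omega_0$ from Remark~\ref{re_S2enwedge3} to conclude that $(\wedge^3 V_1^*)^{L_1}$ is a real line, and that any nonzero element of it complexifies to a nonzero multiple of $\Omega_0$, hence is generic. Your route is a little cleaner — it removes the case distinction on $\mathrm{Re}/\mathrm{Im}$ and bypasses Lemma~\ref{le_contenerg2essergenerica} entirely, and it makes the passage ``invariant $3$-form $\Rightarrow$ scalar multiple of $\Omega_0$ $\Rightarrow$ generic'' more transparent. The trade-off is that you lean directly on the $\frg_2$-module decomposition $\wedge^3V^*\cong V(2\omega_1)\oplus V(\omega_1)\oplus V(0)$, which the paper only asserts in Remark~\ref{re_S2enwedge3}, whereas the paper's proof only needs the proved Lemma~\ref{le_solounan} about $S^2V^*$. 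In the context of this paper the two multiplicity-one facts are treated as interchangeable, so this is a cosmetic rather than substantive difference.
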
 
 
 \begin{proof}
 If a Lie algebra $L_1$ is  a real   form of $ \Der(\cC)\equiv\Der(\CC^7,\Omega_0)$, take $V_1$ the $L_1$-module of dimension $7$ as in Proposition~\ref{pr_elmoduloreal}, i.e. such that  $V_1^\CC=\CC^7$. 
 Consider the restriction of the $\CC$-trilinear alternating map $\Omega_0\colon \CC^7\times  \CC^7\times  \CC^7\to\CC$ to 
 $\Omega_0\vert_{V_1}\colon V_1\times  V_1\times  V_1\to\CC$: thus 
 $\textrm{Re}(\Omega_0\vert_{V_1})$ and $\textrm{Im}(\Omega_0\vert_{V_1})$ are both $\RR$-trilinear alternating maps, and at least one of them is nonzero, say $\Omega_1'$. As trivially they are $L_1$-invariant, then 
 $L_1\subset \Der(V_1,\Omega_1')$.  Now Lemma~\ref{le_contenerg2essergenerica} implies that $\Omega_1'^\CC$ is generic. Hence  $\Omega_1'$ is generic too, so it belongs to the orbit of either $\Omega_0$ or $\Omega_1$, what finishes the proof.
 \end{proof}

  \begin{remark}  
 
 In particular, Proposition~\ref{pr_finalmentedos} allows us to conclude that $\frg_c$ is compact without using properties of homogeneous spaces, because $\frg_c$ is not split since it contains a compact subalgebra isomorphic to $\frsu(3)$. Alternatively, we can argument without a deep detail why $\kappa$ is negative definite. The computation of the Killing form is always difficult, but some clues are: 
\begin{itemize}
\item $\mathfrak{h}$ and $\mathfrak{m}$ in Remark~\ref{re_ehr} are orthogonal for $\kappa$, since when we complexify, $\mathfrak{h}^\CC\cong \cL_{\bar 0}$ and $\mathfrak{m}^\CC\cong \cL_{\bar 1}\oplus \cL_{\bar 2}$ as in Remark~\ref{re_grad}, and the pieces of any group-grading (in this case a $\ZZ_3$-grading) satisfy $\kappa(\cL_{\bar i},\cL_{\bar j})=0$ if $\bar i+\bar j\ne\bar0$, because $(\ad \cL_{\bar i}\ad\cL_{\bar j})(\cL_{\bar k})\cap\cL_{\bar k}\subset \cL_{\bar i+\bar j+\bar k}\cap\cL_{\bar k}=0$;
\item The restriction $\kappa\vert_{\frh}$ is a (positive) multiple of the Killing form of $\frsu(3)$;
\item There is $\alpha\in\RR$ a positive scalar such that $\kappa(\varphi_Y,\varphi_Z)=-\alpha n(Y,Z)$. (The key is that both maps $\kappa$ and $n$ are $\frh$-invariant maps $\mathfrak{m}\times\mathfrak{m}\to\RR$, so representation theory says that one should be scalar multiple of the other. This allows to find $\alpha$ by computing   $\kappa$ in one concrete nonorthogonal couple $(Y,Z)$.)
\end{itemize}
  \end{remark}  
 
  \begin{remark}  
  The usual way of knowing that there are just two real forms of type $G_2$ is sketched here.

As mentioned in Section~\ref{se_reales}, the real forms (up to isomorphism) of a complex Lie algebra are in bijective correspondence with involutive (order one or two) automorphisms (up to conjugation) of  the complex algebra. But in $\frg_2$ there is only one conjugacy class of nontrivial order two automorphisms. Indeed, for $\cC=\CC\oplus\CC^7$ the octonion algebra and  $\frg_2=\Der(\cC)$, then
$$
\Aut(\cC)=G_2\longrightarrow \Aut(\frg_2),\qquad f\mapsto \Ad(f)
$$
is a group isomorphism, where
$$
\Ad(f)\colon \frg_2\to\frg_2,\qquad \Ad(f)(d)=fdf^{-1}.
$$
(Take into consideration that $\Der(\cC)=\frg_2\cong\ad(\frg_2)=\Der(\frg_2)$, since any derivation of a semisimple Lie algebra over $\FF$ is inner. This fact is the linear version of the above.)
But $\Ad(f)^2=\id_{\frg_2}$ means that $f^2$ commutes with $\Der(\cC)$, so that $f^2\in\CC\id_{\cC}$, since the centralizer
$\textrm{Cent}_{\frgl(\cC)}\Der(\cC)=\CC\id_{\cC}$. As
$-\id_{\cC}$  is not an automorphism, then 
  $f^2= \id_{\cC}$. Note finally that in $\cC$ there is a unique order two automorphism  up to conjugation, which acts as the identity in a copy of $\CC\oplus\CC^3$ (quaternion algebra provided by the -essentially unique- cross product in $\CC^3$) and as   minus the identity in the orthogonal subspace. \end{remark}

\subsection[hola]{Case the norm is not positive definite}\label{se_casosplit}\smallskip
 
 Again, we are interested in the action $G_{\Omega_0}\times V\to V$, although this time the norm has signature $(4,3)$, so that   the orbit $G_{\Omega_0}\cdot X$ for a nonisotropic vector $X$ with norm $n(X)=-1$    will be   contained in the hypercuadric $\{Y\in V: n(Y)=-1\}$.  
 (Recall that $G_{\Omega_0}\subset\SO(V,n)$ according to Lemma~\ref{le_parteconexa}, since we are considering here the real case.)
 
 {We follow notation   in O'Neill's book \cite[4]{ONeill}. So
 $\RR_{\nu}^{n+1}$ denotes $\RR^{n+1}$ with the norm $q$ given by $\diag\{-I_\nu,I_{n+1-\nu}\}$. The nullcone
 $\{x\in\RR_{\nu}^{n+1}:q(x)=0\}$ is a hypersurface  diffeomorphic to $(\RR^\nu\setminus0)\times\mathbb{S}^{n-\nu}$; the pseudosphere of radius $r>0$ is $ \mathbb{S}_{\nu}^n(r)=\{x\in\RR_{\nu}^{n+1}:q(x)=r^2\}$ and the pseudohyperbolic space 
of radius $r>0$ is $ H_{\nu}^n(r)=\{x\in\RR_{\nu+1}^{n+1}:q(x)=-r^2\}$. } Thus, our hyperquadric  is   $\{Y\in \RR_{4}^{7}: n(Y)=-1\}\equiv H_3^6(1)$.

 Similarly to Proposition~\ref{le_elsigma} we have,

 \begin{proposition}\label{le_comoelsigma}
Take $X\in V$ such that $n(X)=-1$, and $W=X^\perp$. Recall that $W=W^+\oplus W^-$, for the (totally isotropic) eigenspaces
$W^\pm=\{Y\in V: f(Y)=\pm Y\}$, since the map $f\colon W\to W$ given by $f(Y)=X\times Y$ satisfies $f^2=\id$.
Then, for $H_X=\{g\in G_{\Omega_0}: g(X)=X\}$ the isotropy group of $X$, the map
\begin{equation}\label{eq_SL3}
H_X\to\mathrm{SL}(W^+),\quad g\mapsto g\vert_{W^+}
\end{equation}
is well defined and a Lie group isomorphism.   
Furthermore,
\begin{equation}\label{eq_ehr}
G_{{\Omega_0}}/\mathrm{SL}_3(\RR)\cong H_3^6(1).
\end{equation}
\end{proposition}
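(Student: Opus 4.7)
The plan is to mimic the proof of Proposition~\ref{le_elsigma} step by step, with the complex structure $J$ replaced by the involution $f(Y)=X\times Y$ (now satisfying $f^{2}=\mathrm{id}$) and $\mathrm{SU}(W,\sigma)$ replaced by $\mathrm{SL}(W^{+})$. First I would observe that any $g\in H_X$ commutes with $f$: since $g(X)=X$ and $g$ preserves $\times$, one has $g(f(Y))=g(X\times Y)=X\times g(Y)=f(g(Y))$. Hence $g$ stabilises both eigenspaces $W^{\pm}$, and the restriction map $g\mapsto g|_{W^{+}}$ is well defined. Recall that $W^{+}$ and $W^{-}$ are totally isotropic of dimension $3$, paired nondegenerately by $n$.

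For injectivity I would use the basis $\{Y_1,Y_2,Y_3\}$ of $W^{+}$ and $\{Z_1,Z_2,Z_3\}$ of $W^{-}$ constructed in the proof of Theorem~\ref{teo_casoC}: the identities $Y_{i+1}\times Y_{i+2}=2Z_i$ (just after Eq.~\eqref{eq_multiplic}) show that if $g|_{W^{+}}=\mathrm{id}$ then $g(Z_i)=\tfrac12 g(Y_{i+1})\times g(Y_{i+2})=Z_i$, so $g=\mathrm{id}$. To see $g|_{W^{+}}\in\mathrm{SL}(W^{+})$, restrict $\Omega_0$ to $W^{+}$: by Eq.~\eqref{transisplit} the value $\Omega_0(Y_1,Y_2,Y_3)=-4$ is nonzero, so $\Omega_0|_{W^{+}\times W^{+}\times W^{+}}$ is a nonzero alternating $3$-form on a $3$-dimensional space; any linear map preserving it has determinant $1$.

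Surjectivity would come from dimension count rather than explicit construction. Remark~\ref{re_cotasdimensiones} gives $\dim H_X\ge 8=\dim\mathrm{SL}_3(\RR)$, combined with injectivity this forces $\dim H_X=8$ and makes $H_X\to\mathrm{SL}(W^{+})$ a local diffeomorphism with open image; because $\mathrm{SL}_3(\RR)$ is connected, the image fills the whole target. Injectivity plus surjectivity then yield a Lie group isomorphism $H_X\cong\mathrm{SL}_3(\RR)$.

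Finally, for Eq.~\eqref{eq_ehr} I would prove that $G_{\Omega_0}$ acts transitively on $H_3^6(1)=\{Y\in V:n(Y)=-1\}$: given any $X'$ with $n(X')=-1$, repeating the construction in the proof of Theorem~\ref{teo_casoC} produces a basis $\{X',Y'_i,Z'_i\}$ satisfying Eq.~\eqref{transisplit}, and the change of basis from $\{X,Y_i,Z_i\}$ to $\{X',Y'_i,Z'_i\}$ lies in $G_{\Omega_0}$ and sends $X\mapsto X'$. The standard homogeneous-space machinery then gives $G_{\Omega_0}/H_X\cong H_3^6(1)$. The main point requiring care is the connectedness bookkeeping in this last identification (both for ensuring the orbit really is all of $H_3^6(1)$ and for asserting the quotient is a global diffeomorphism); this is resolved by the fact that $H_3^6(1)\simeq\mathbb{S}^3\times\RR^3$ is connected, reducing the statement to the orbit map being a submersion onto a connected manifold, which is automatic once the isotropy has the right dimension.
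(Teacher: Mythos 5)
Your proposal is correct and follows the same overall strategy as the paper's proof: commute with $f$ to restrict to $W^+$, use the nonvanishing of $\Omega_0|_{W^+}$ to land in $\SL(W^+)$, match dimensions via Remark~\ref{re_cotasdimensiones}, and establish transitivity by the basis construction of Theorem~\ref{teo_casoC}. Two minor differences from the paper's proof are worth noting. For injectivity, you use the explicit identity $Y_{i+1}\times Y_{i+2}=2Z_i$ together with the fact that $g$ preserves the cross product; the paper instead argues abstractly that $g\vert_{W^+}$ determines $g\vert_{W^-}$ because $W^{\pm}$ are totally isotropic dual subspaces under $n$ and $H_X\subset\SO(W,n)$. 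Both work, but your route quietly invokes that every $g\in H_X$ preserves $\times$ — this is true (any $g\in G_{\Omega_0}$ lies in $\SO(V,n)$ by Lemma~\ref{le_parteconexa}c), and preserving $\Omega_0$ and $n$ simultaneously forces preservation of $\times$ since $\Omega_0(X,Y,Z)=n(X\times Y,Z)$), but you should make this justification explicit, especially since the global statement $G_{\Omega_0}=\Aut(V,\times)$ is only cleanly established later in Section~\ref{se_conexo}, and one should avoid the appearance of circularity. For transitivity you take the change of basis between two bases adapted to $X$ and $X'$, while the paper routes through the fixed reference point $\tfrac{E_1+F_1}{2}$ and the identity $Y_1+Z_1=2Y$; yours is a bit more direct and equally valid. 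You are also more careful than the paper about why dimension equality yields surjectivity (open image plus connectedness of $\SL_3(\RR)$), which is a welcome clarification.
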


\begin{proof}
Of course we can consider the restriction of $g\in H_X$ to $W^+$ since $g$ commutes with $f$.
Now $\Omega_0\colon W^+\times W^+\times W^+\to\RR$ is a trilinear alternating nonzero map preserved by $g$, so that $\det g\vert_{W^+} =1$, and the map in \eqref{eq_SL3} is well defined.

The injectivity is a consequence of the fact that $g\vert_{W^+}$ determines $g\vert_{W^-}$, since $W^+$ is dual to $W^-$ and   $  H_X\subset\SO(W,n)$. In particular, $\dim H_X\le\dim \mathrm{SL}_3(\RR)=8$. According to Remark~\ref{re_cotasdimensiones}, $\dim H_X\ge8$, what implies  $\dim H_X=8$ and the map \eqref{eq_SL3} is also surjective.

The last task is to prove that the orbit $G_{\Omega_0}\cdot X$ fills the whole $\{Y\in V: n(Y)=-1\}\equiv H_3^6(1)$. But for any $Y\in 
H_3^6(1)$, we find $Y_1,Y_2,Y_3\in\ker(f-\id)$ with $\Omega(Y_1,Y_2,Y_3)=-4$ and $Z_i=\frac12 Y_{i+1}\times Y_{i+2}$ such that $Y_1+Z_1=2Y$. (This, like in the proof of Theorem~\ref{teo_casoC}, implies that there is $\varphi\in G_{\Omega_0}$ such that $\varphi\left(\frac{E_1+F_1}2\right)=Y$, so that the orbit $ G_{\Omega_0}\cdot \frac{E_1+F_1}2=H_3^6(1)$.) In order to find the required basis, it suffices to take $Y_1=Y+f(Y)\in\ker(f-\id)$ and $Y_2,Y_3\in\ker(f-\id)\cap \langle Y,f(Y)\rangle^\perp$ (adjusting the scalars to have $\Omega_0(Y_1,Y_2,Y_3)=-4$). Then $Z_1=\frac12 Y_{2}\times Y_{3}$ is equal to $Y-f(Y)$ because $Y-f(Y)\in\ker(f+\id)$ satisfies $n(Y+f(Y),Y-f(Y))=-2$ and $n(Y-f(Y),Y_s)=0$ for $s=2,3$ (properties which characterize the element $Z_1$).
\end{proof}

Again the connectedness of $G_{\Omega_0}$ is an immediate  consequence of this proposition. Also there is a complex version of this proposition, which  will be commented in the next subsection.

The homogeneous space described in  \eqref{eq_ehr} is again a reductive homogeneous space, with related reductive decomposition  provided by Remark~\ref{re_grad}.\footnote{
A reader interested in all  the eight reductive homogeneous spaces of $G_2$ (both split and compact), their invariant affine 
connections, holonomy algebras and so on can consult  \cite{ehrg2}. 
} This time the complementary subspace $T_XH_3^6(1)$ is not $\frsl_3(\RR)$-irreducible, but it breaks as a sum of the natural 3-dimensional module and its dual one. 

Note that  $H_3^6(1)$ is diffeomorphic (and antiisometric) to $\mathbb{S}_3^6$ \cite[Lemmas~24 and 25]{ONeill}, so that in particular $G_{2,2}$ acts transitively too on the pseudosphere $\mathbb{S}_3^6$.

\begin{remark} In fact, the same arguments can be used to obtain a stronger result, namely, our real groups of type $G_2$ can be identified with certain Stiefel manifolds:
\begin{itemize}
\item[i)] $G_{2,-14}\cong\{(X_0,X_1,X_2)\in (\RR^7)^3:n(X_i,X_j)=\delta_{ij}, \ X_0\perp X_1\times X_2\}$, for $n$ a positive definite norm;  
\item[ii)] $G_{2,2}\cong\{(X_0,X_1,X_2)\in (\RR^7)^3:n(X_i,X_j)=-\delta_{ij}, \ X_0\perp X_1\times X_2\}$, for $n$ a  norm with signature $(4,3)$.
\end{itemize}

As the group preserves the product and the norm, it is clear that it also acts on the corresponding set of ordered triples. Let us see in each case why this is a transitive action with isotropy group equal to the identity.
For i), the endomorphism of $V$ as in Eq.~\eqref{eq_transi} leads a triplet to another one. For ii), take $X=X_0$, $Y_1=X_1+X\times X_1$ (that belongs to the eigenspace $V_{X,1}$), $Y_2=X_2+X\times X_2$, $Y_3\in  V_{X,1}$ such that $n(Y_3,Y_1\times Y_2)=-4$, $Z_i=\frac12 Y_{i+1}\times Y_{i+2}$. Equation~\eqref{transisplit} says that such a set can be lead to any other set (basis, in fact) constructed in the same way. But $Y_1+Z_1=2X_1$ and $Y_2+Z_2=2X_2$ as at the end of the proof of Proposition~\ref{le_comoelsigma}.  \smallskip

Observe that this remark provides a way of understanding the real groups of type $G_2$ also related to octonions,  described in \cite[4.1]{Baez}. For instance, for the compact case: 
$(X_0,X_1,X_2)$ is called a \emph{ basic triple} if $X_i$'s are zero trace octonions whose square is $-1$, $X_1$ anticommutes with $X_0$ and $X_2$ anticommutes with $X_0$, $X_1$ and $X_0X_1$
  (shorter, three orthonormal vectors such that $ \Omega_1(X_0,X_1,X_2)=0$). 
 In this case $(X_1,X_2,X_0)$ is a basic triple too. Besides the subalgebras of $\OO$ generated by $\{X_0\}$, $\{X_0,X_1\}$ and $\{X_0,X_1,X_2\}$ are respectively isomorphic to $\CC$, $\HH$ and $\OO$. According to i) the group $G_{2,-14}$ is in bijective correspondence with the set of basic triples, for the  correspondence  $g\in G_{2,-14}\leftrightarrow(g(e_1),g(e_2),g(e_7))$.
\end{remark}

\subsection{Connectedness}\label{se_conexo}

We have postponed some points on the connectedness until now, namely, Corollaries \ref{co_conexossplit} and \ref{co_conexoscompactos}, due to the fact that  the  handling  of groups is  more difficult   than the one of algebras. First we will check the version for groups of Proposition~\ref{pr_preservacross}, enclosing also the positive definite case. (In fact, Proposition~\ref{pr_preservacross} is a trivial consequence of Proposition~\ref{pr_autmalcevyc}, but not conversely.)
   
  \begin{proposition}\label{pr_autmalcevyc}
  Let $\Omega$ be a generic 3-form on $V=\FF^7$ ($\FF$ either $\RR$ or $\CC$), $n$ the norm related to the corresponding cross product $\times $ and $\cC=\FF\oplus V$ the related octonion algebra.
The map
$$
f\in\Aut(V,\times)\mapsto \tilde f\in\Aut(\cC),\quad\left\{\begin{array}{l}\tilde f(1)=1,\\\tilde f\vert_V=f,\end{array}\right.
$$
is a Lie group isomorphism. Furthermore,   $\Aut(V,\times)\subset\mathrm{O}(V,n)\cap G_\Omega$.
\end{proposition}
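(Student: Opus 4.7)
The plan is to combine the two halves of the statement by first proving the inclusion $\Aut(V,\times)\subset \mathrm{O}(V,n)\cap G_\Omega$, and then exploiting the octonion formula $XY=-n(X,Y)1+X\times Y$ to transfer automorphisms from $(V,\times)$ to $\cC$.

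For the first step, let $f\in\Aut(V,\times)$. I would apply $f$ to the fundamental identity
\[
X\times (X\times Y)=n(X,Y)X-n(X)Y
\]
established in Eq.~\eqref{eq_mainidentity} (valid throughout $V$, in both the real and complex case). Since $f$ commutes with $\times$, the left-hand side transforms to $f(X)\times(f(X)\times f(Y))=n(f(X),f(Y))f(X)-n(f(X))f(Y)$, while the right-hand side transforms to $n(X,Y)f(X)-n(X)f(Y)$. Equating and using that $f(X),f(Y)$ are linearly independent whenever $X,Y$ are (bijectivity of $f$), I conclude $n(f(X))=n(X)$ and $n(f(X),f(Y))=n(X,Y)$, i.e.\ $f\in\mathrm{O}(V,n)$. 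Consequently $\Omega(f(X),f(Y),f(Z))=n(f(X)\times f(Y),f(Z))=n(X\times Y,Z)=\Omega(X,Y,Z)$, so $f\in G_\Omega$, settling the \emph{furthermore} clause.

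Next I would verify that $\tilde f$ is indeed an algebra automorphism of $\cC$. Using $XY=-n(X,Y)1+X\times Y$ for $X,Y\in V$ and the fact that $f$ preserves both $n$ and $\times$, one directly checks $\tilde f(XY)=\tilde f(X)\tilde f(Y)$; the cases involving $1$ are trivial since $\tilde f(1)=1$. The map $f\mapsto\tilde f$ is clearly an injective group homomorphism, and it is smooth (indeed, polynomial in matrix entries), so it will be a Lie group embedding once surjectivity is shown.

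For surjectivity, given $g\in\Aut(\cC)$ I would argue in two steps. First, $g(1)$ is a two-sided identity of $\cC$ because $g(1)g(X)=g(X)=g(X)g(1)$ for every $X$ and $g$ is bijective; uniqueness of the unit in a unital algebra forces $g(1)=1$. Second, for $X\in V\setminus\FF1$ (equivalently $X\notin\FF1$), apply $g$ to the quadratic equation~\eqref{eq_cuad}: one obtains $g(X)^2-\tr(X)g(X)+n(X)1=0$, and since $g(X)\notin\FF1$ as well, the uniqueness of the coefficients noted in Remark~\ref{re_cuadratica} yields $\tr(g(X))=\tr(X)$ and $n(g(X))=n(X)$. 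Thus $g$ preserves the trace, hence stabilizes $V=\{X\in\cC:\tr(X)=0\}$, and preserves $n$ on the whole of $\cC$. Finally, projecting the equality $g(XY)=g(X)g(Y)$ onto $V$ for $X,Y\in V$ gives $g(X\times Y)=g(X)\times g(Y)$, so $f:=g\vert_V\in\Aut(V,\times)$ and $g=\tilde f$.

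The main subtlety I anticipate is the argument $g(1)=1$: one might naively deduce only that $g(1)$ is idempotent, but in the split octonion algebra there do exist nontrivial idempotents, so the argument must invoke the fact that $g(1)$ is a left (and right) identity and the uniqueness of the unit, rather than just idempotency. Everything else amounts to routine manipulations with the octonionic product and the identity~\eqref{eq_mainidentity}, which were already the cornerstone of the Lie algebra counterpart in Proposition~\ref{pr_preservacross}.
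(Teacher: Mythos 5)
Your proof is correct and follows essentially the same route as the paper's: apply $f$ to the identity $X\times(X\times Y)=n(X,Y)X-n(X)Y$ to obtain $f\in\mathrm{O}(V,n)\cap G_\Omega$, verify $\tilde f\in\Aut(\cC)$ directly from the product formula, and for the converse use $g(1)=1$ together with the quadratic equation \eqref{eq_cuad} and Remark~\ref{re_cuadratica} to show $g(V)\subset V$ before projecting. The only (harmless) variations are cosmetic: your appeal to uniqueness of the unit versus the paper's direct bijectivity argument, and your explicit mention of smoothness of $f\mapsto\tilde f$.
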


\begin{proof}
Take $f\in  \Aut(V,\times)$ and linearly independent elements  $X,Y\in V$. By \eqref{eq_mainidentity}, 
$$
n(X,Y)f(X)-n(X)f(Y)=f(X\times(X\times Y))=n(f(X),f(Y))f(X)-n(f(X))f(Y),
$$
and consequently $n(X,Y)=n(f(X),f(Y))$.  
As besides $\Omega(X,Y,Z)=n(X\times Y,Z)$, clearly the above implies that also $\Aut(V,\times)\subset\mathrm{O}(V,n)\cap G_\Omega$.

Now $\tilde f(XY)=\tilde f(-n(X,Y)1+X\times Y)$ coincides with 
$\tilde f(X)\tilde f(Y)=f(X)f(Y)=-n(f(X),f(Y))1+f(X)\times f(Y)$, so that  $\tilde f$ is an automorphism of the octonion algebra.\smallskip

Conversely take $f\in\Aut(\cC)$. As $f(X)=f(1X)=f(1)f(X)$ for all $X$, then $f(1)=1$ ($f$ bijective). Let us check that $f(V)\subset V$. We apply $f$ in Eq.~\eqref{eq_cuad} to get
$$
-n(X)1=f(-n(X)1)=f(X^2)=f(X)^2=2n(f(X),1)f(X)-n(f(X))1 
$$
when $X\in V$, but $f(X)\notin \FF1$, hence $n(f(X),1)=0$ and $f(X)\in V$ (Remark~\ref{re_cuadratica}). We have that 
$$
\begin{array}{ll}
f(X\times Y)&=f(XY+n(X,Y)1)=f(X)f(Y)+n(X,Y)1\\
&=(-n(f(X),f(Y))+n(X,Y))1+f(X)\times f(Y)
\end{array}
$$
if $X,Y\in V$, so the projection on $V$ gives $f\in  \Aut(V,\times)$.
\end{proof}

As we have commented throughout the paper,

\begin{lemma}\label{pr_conreal}
 Let $\Omega$ be a generic 3-form on $V=\RR^7$. Then $G_\Omega$ is connected.
\end{lemma}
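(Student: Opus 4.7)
The plan is to reduce to the two model generic 3-forms $\Omega_0$ and $\Omega_1$ of Theorem~\ref{teo_casoR}, and then read connectedness off the fibrations already supplied by Propositions~\ref{le_elsigma} and~\ref{le_comoelsigma}.

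First I would invoke Theorem~\ref{teo_casoR} to write $\Omega = f \cdot \Omega_i$ for some $f \in \GL_7(\RR)$ and some $i \in \{0,1\}$. A short computation with the pull-back action \eqref{eq_accion} yields $G_{\Omega} = f^{-1} G_{\Omega_i} f$, so conjugation by $f$ is a homeomorphism between $G_{\Omega}$ and $G_{\Omega_i}$. This reduces the claim to proving that both $G_{\Omega_0}$ and $G_{\Omega_1}$ are connected.

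Next, Propositions~\ref{le_elsigma} and~\ref{le_comoelsigma} present each $G_{\Omega_i}$ as the total space of a principal fiber bundle over a homogeneous space, namely
\[
\mathrm{SU}(3) \hookrightarrow G_{\Omega_1} \twoheadrightarrow \mathbb{S}^6, \qquad \mathrm{SL}_3(\RR) \hookrightarrow G_{\Omega_0} \twoheadrightarrow H_3^6(1).
\]
From the tail $\pi_0(\text{fiber}) \to \pi_0(\text{total}) \to \pi_0(\text{base})$ of the long exact homotopy sequence, each $G_{\Omega_i}$ is connected as soon as both its fiber and base are. So it only remains to verify the four connectedness assertions: $\mathrm{SU}(3)$ is classical; $\mathrm{SL}_3(\RR)$ is connected via the Iwasawa decomposition $\mathrm{SO}(3)\cdot A\cdot N$, all three factors being connected; $\mathbb{S}^6$ is trivially connected; and $H_3^6(1)$, in an orthogonal basis where the Gram matrix of $n$ is $\diag\{-I_4,I_3\}$, is the hypersurface $y_1^2+y_2^2+y_3^2+y_4^2 = 1 + y_5^2+y_6^2+y_7^2$, which is diffeomorphic to $\RR^3 \times \mathbb{S}^3$ via projection onto the last three coordinates and hence connected.

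The one slightly delicate point I anticipate is that the homotopy exact sequence argument requires the identifications $G_{\Omega_i}/H_X \cong M_i$ from Propositions~\ref{le_elsigma} and~\ref{le_comoelsigma} to be genuine homeomorphisms (equivalently, that the orbit maps be honest submersions onto their images), not merely smooth bijections; this however is already built into those propositions via the standard fact that orbits of smooth actions of Lie groups with closed stabilizers are embedded submanifolds, combined with the transitivity statements that precede them.
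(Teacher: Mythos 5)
Your proof follows essentially the same route as the paper's: reduce to the two representatives $\Omega_0,\Omega_1$ of Theorem~\ref{teo_casoR}, then invoke the coset fibrations $G_{\Omega_1}/\mathrm{SU}(3)\cong\mathbb{S}^6$ and $G_{\Omega_0}/\mathrm{SL}_3(\RR)\cong H_3^6(1)$ from Propositions~\ref{le_elsigma} and~\ref{le_comoelsigma} together with the standard fact that connectedness of fiber and base forces connectedness of the total space (the paper cites this as Warner, Proposition~3.66, while you derive it from the tail of the homotopy exact sequence; these are the same observation). Your extra verifications of the connectedness of $\mathrm{SL}_3(\RR)$, $\mathbb{S}^6$, and $H_3^6(1)\cong\RR^3\times\mathbb{S}^3$ are all correct and consistent with the paper.
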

\begin{proof}
We can assume that $\Omega$ is either $\Omega_0$ or  $\Omega_1$.
According to \cite[Proposition~3.66]{Warner}, if $H$ is a connected closed subgroup of a Lie group $G$ and $G/H$ is connected, then $G$ is also connected. This immediately gives the result taking into consideration Propositions~\ref{le_elsigma} and \ref{le_comoelsigma}, respectively.
\end{proof}

This proves, in particular, Corollary~\ref{co_conexoscompactos} and the real part of Corollary~\ref{co_conexossplit}.
The complex case requires to be careful:  By Lemma~\ref{le_parteconexa},
   $G_\Omega$ is not connected,
since $\det(\omega\id_V)=\omega^7=\omega\ne1$, so   $\omega\id_V\notin G_\Omega^+$. 

\begin{lemma}\label{pr_concomplejo}
Let $\Omega$ be a generic 3-form on $V=\CC^7$ and $n$ a  
related nondegenerate bilinear form. Then
$$
\Aut(V,\times)= G_\Omega^+=G_\Omega\cap\SL(V)=G_\Omega\cap\mathrm{O}(V,n)\cong\Aut(\cC).
$$
\end{lemma}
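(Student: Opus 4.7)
The plan is to mimic the split real argument of Proposition~\ref{le_comoelsigma} in the complex setting, reducing everything to the connectedness of $G:=G_\Omega\cap\mathrm{O}(V,n)$. By Proposition~\ref{pr_autmalcevyc} we already have $\Aut(V,\times)\subset G$ together with the isomorphism $\Aut(V,\times)\cong\Aut(\cC)$, and by Lemma~\ref{le_parteconexa}(b) also $G_\Omega^+\subset G=G_\Omega\cap\SL(V)$. All three groups are closed subgroups of $\GL(V)$ sharing the same Lie algebra $\frg_2$ (using Lemma~\ref{le_antisim} and Proposition~\ref{pr_conservacross}), so their identity components coincide. Consequently, once I show that $G$ itself is connected, the chain $G_\Omega^+\subset\Aut(V,\times)\subset G=G_\Omega^+$ collapses, and together with Proposition~\ref{pr_autmalcevyc} this yields every assertion of the lemma.

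To prove $G$ is connected, I introduce a $G$-homogeneous space. Fix $X\in V$ with $n(X)=-1$ and consider the action of $G$ on the smooth quadric $Q=\{Y\in V:n(Y)=-1\}$, which is a connected complex manifold of complex dimension $6$ (an irreducible nondegenerate affine quadric). By the key identity~\eqref{eq_mainidentity}, the map $f\colon X^\perp\to X^\perp$, $f(Y)=X\times Y$, satisfies $f^2=\id$, so $X^\perp=W^+\oplus W^-$ decomposes into totally isotropic $\pm 1$-eigenspaces of dimension $3$, mutually paired nondegenerately by $n$. Each $g$ in the isotropy subgroup $H_X$ commutes with $f$, preserves the splitting, and is determined by $g|_{W^+}$ (since $W^-$ is the $n$-dual of $W^+$). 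The restriction $g|_{W^+}$ preserves $\Omega_0|_{W^+\times W^+\times W^+}$, a nonzero alternating $3$-form on a $3$-dimensional space, hence $g|_{W^+}\in\SL(W^+)\cong\SL_3(\CC)$. This yields a Lie group monomorphism $H_X\hookrightarrow\SL_3(\CC)$, which combined with the lower bound $\dim H_X\ge 8$ from Remark~\ref{re_cotasdimensiones} forces equality of dimensions, and so an isomorphism; in particular $H_X$ is connected.

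Transitivity of $G$ on $Q$ is the technical heart. Given any $X'\in Q$, I repeat the construction at the end of the proof of Theorem~\ref{teo_casoC} (the vectors $Y_i\in W^+$, $Z_i\in W^-$ with the normalisations~\eqref{transisplit}) once for $X$ and once for $X'$; this produces two bases of $V$ on which $\Omega_0$ takes identical values and whose Gram matrix for $n$ is the fixed matrix of~\eqref{eq_lan}. The corresponding change-of-basis $\varphi\in\GL(V)$ then lies simultaneously in $G_\Omega$ and in $\mathrm{O}(V,n)$, i.e.\ $\varphi\in G$, and satisfies $\varphi(X)=X'$. Applying \cite[Proposition~3.66]{Warner} to the fibration $H_X\to G\to Q$ with connected base and fibre, $G$ is connected, and the lemma follows.

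The main obstacle is the transitivity step: one must carry over the normalisations~\eqref{transisplit} from the proof of Theorem~\ref{teo_casoC} starting from an arbitrary $X\in Q$, rather than from a normalisation chosen up to rescaling the norm, and verify that the resulting $\varphi$ preserves both $\Omega_0$ and $n$. The former is built into the construction, and the latter is automatic once one notes that any two bases of the standard shape $(X,Y_i,Z_i)$ share the same Gram matrix for $n$, so $\varphi\in\mathrm{O}(V,n)$ at no extra cost.
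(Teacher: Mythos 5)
Your proof follows the paper's own argument essentially step for step: both reduce the lemma to the connectedness of $G_\Omega\cap\mathrm{O}(V,n)$, both establish connectedness via the orbit-stabilizer fibration over the affine quadric $\{n=-1\}$ with stabilizer $\SL_3(\CC)$ (imported from Proposition~\ref{le_comoelsigma}) and transitivity via the normalized basis $\{X,Y_i,Z_i\}$ of \eqref{transisplit}, and both finish with Warner's Proposition~3.66 and Proposition~\ref{pr_autmalcevyc}. The only cosmetic difference is that you phrase the transitivity step as a composition of two normalizations (one for $X$, one for $X'$) rather than transporting a fixed canonical basis to $\{X,Y_i,Z_i\}$; the content is the same.
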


In particular, this finishes  the complex part of Corollary~\ref{co_conexossplit}.

\begin{proof}
By Lemma~\ref{le_parteconexa}, $G_\Omega^+\subset G_\Omega\cap\SL(V)=G_\Omega\cap\mathrm{O}(V,n)$. In order to prove that both groups are equal, it suffices to prove that $G_\Omega\cap\SL(V)$ is connected because the Lie algebras of both groups are the same ($\Der(V,\Omega)\subset\frso(V,n)$ because $\Omega$ is in the orbit of $\Omega_0$, Lemma~\ref{le_antisim} and Proposition~\ref{pr_enson}). Consider the action of this group on the connected manifold $\{X\in\CC^7:n(X)=-1\}$ (since the group preserves $n$.) The isotropy subgroup of $X$ is isomorphic to $\SL_3(\CC)$ because of the same arguments as in the proof of Proposition~\ref{le_comoelsigma}. But the action is transitive. Indeed, if $X$ has norm $-1$, there is   $\varphi\colon V\to V$   given by $\varphi(E_0)=X$, $\varphi(E_i)=Y_i$ and $\varphi(F_i)=Z_i$ ($Y_i$'s and $Z_i$'s have been chosen as in \eqref{transisplit}), that belongs obviously to $G_\Omega$, but besides preserves $n$ (again by  \eqref{transisplit}) so that  $\varphi\in G_\Omega\cap\mathrm{O}(V,n)$ and thus the action is transitive.
Again \cite[Proposition~3.66]{Warner} gives that $G_\Omega\cap\SL(V)$ is connected.

The rest of the proof is then a trivial consequence of Proposition~\ref{pr_autmalcevyc} and the fact of having the same Lie algebras: Propositions~\ref{pr_conservacross} and ~\ref{pr_enson}.
\end{proof}

In consequence the automorphism group of an octonion algebra is always a connected  Lie group, both in the two real cases (corresponding to division and split octonion algebras, respectively) and also in the complex one. Instead, not all of them are simply connected.

\begin{proposition}\label{co_simpleconexion}
  $G_{2,2}$ is not simply connected, its fundamental group is $\ZZ_2$.
\end{proposition}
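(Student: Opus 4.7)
The plan is to exploit the homogeneous space realization $G_{2,2}/\mathrm{SL}_3(\RR) \cong H_3^6(1)$ from Proposition~\ref{le_comoelsigma}, giving a principal fiber bundle
$$
\mathrm{SL}_3(\RR)\,\hookrightarrow\,G_{2,2}\,\twoheadrightarrow\,H_3^6(1),
$$
and read off $\pi_1(G_{2,2})$ from the induced long exact sequence of homotopy groups. For this we need $\pi_1$ and $\pi_2$ of fiber and base.

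First I would compute the topology of the base. The hyperquadric $H_3^6(1)\subset\RR_4^7$ is the locus $-x_1^2-x_2^2-x_3^2-x_4^2+x_5^2+x_6^2+x_7^2=-1$. Writing $a=(x_1,\ldots,x_4)$, $b=(x_5,x_6,x_7)$, the defining equation becomes $|a|^2=1+|b|^2$, and the map $(a,b)\mapsto\bigl(a/\sqrt{1+|b|^2},\,b\bigr)$ yields an explicit diffeomorphism
$$
H_3^6(1)\;\cong\;\mathbb{S}^3\times\RR^3.
$$
In particular $\pi_1(H_3^6(1))=\pi_1(\mathbb{S}^3)=0$ and $\pi_2(H_3^6(1))=\pi_2(\mathbb{S}^3)=0$.

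Next I would compute the homotopy of the fiber. The polar (Cartan) decomposition for $\mathrm{SL}_n(\RR)$ gives a diffeomorphism $\mathrm{SL}_3(\RR)\cong\mathrm{SO}(3)\times\RR^5$ (any invertible real matrix factors uniquely as orthogonal times symmetric positive definite, and the symmetric positive definite unimodular matrices form a Euclidean space via the exponential of the traceless symmetric ones). Since $\mathrm{SO}(3)\cong\RR P^3$ has universal cover $\mathbb{S}^3$, one has $\pi_1(\mathrm{SL}_3(\RR))=\pi_1(\mathrm{SO}(3))=\ZZ_2$ and $\pi_2(\mathrm{SL}_3(\RR))=\pi_2(\mathbb{S}^3)=0$.

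Finally I would plug these into the homotopy long exact sequence of the fibration:
$$
\pi_2(H_3^6(1))\longrightarrow\pi_1(\mathrm{SL}_3(\RR))\longrightarrow\pi_1(G_{2,2})\longrightarrow\pi_1(H_3^6(1)),
$$
which becomes
$$
0\longrightarrow\ZZ_2\longrightarrow\pi_1(G_{2,2})\longrightarrow 0,
$$
yielding $\pi_1(G_{2,2})\cong\ZZ_2$ and in particular $G_{2,2}$ is not simply connected. The only genuinely nontrivial ingredient is the identification $H_3^6(1)\cong \mathbb{S}^3\times\RR^3$; everything else is either the content of Proposition~\ref{le_comoelsigma} or standard facts about classical groups, so no serious obstacle is expected. (Alternatively, one could use that $G_{2,2}$ deformation retracts onto its maximal compact subgroup $\mathrm{SO}(4)/\ZZ_2\cong\bigl(\mathbb{S}^3\times\mathbb{S}^3\bigr)/\ZZ_2$, whose $\pi_1$ is also $\ZZ_2$, cross-checking the result.)
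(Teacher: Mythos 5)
Your argument follows the paper's own proof essentially step for step: both invoke the fibration $\mathrm{SL}_3(\RR)\hookrightarrow G_{2,2}\twoheadrightarrow G_{2,2}/\mathrm{SL}_3(\RR)$ from Proposition~\ref{le_comoelsigma}, identify the base with $\mathbb{S}^3\times\RR^3$ (the paper via the remark $H_3^6(1)\cong\mathbb S_3^6$ and O'Neill's lemmas, you via the explicit rescaling diffeomorphism), and read off $\pi_1(G_{2,2})\cong\pi_1(\mathrm{SL}_3(\RR))\cong\pi_1(\mathrm{SO}(3))\cong\ZZ_2$ from the long exact homotopy sequence. One small slip in your parenthetical cross-check: the maximal compact subgroup of $G_{2,2}$ is $\mathrm{SO}(4)\cong(\mathbb{S}^3\times\mathbb{S}^3)/\ZZ_2$, not $\mathrm{SO}(4)/\ZZ_2$ (the latter has $\pi_1\cong\ZZ_2\times\ZZ_2$), though this does not affect the main argument.
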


\begin{proof}
For a coset manifold $M=G/H$, there is an exact sequence of groups and homomorphisms \cite[Proposition~11.17]{ONeill},
\begin{equation}\label{eq_sec}
0\to\pi_2(M)\to\pi_1(H)\to\pi_1(G)\to \pi_1(M)\to\pi_0(H)\to\pi_0(G).
\end{equation}
We apply this sequence to  $G_{2,2}/\SL_3(\RR)\cong \mathbb S_3^6$. As $\mathbb S_3^6$ is diffeomorphic (not isometric) to $\mathbb S^3\times\RR^3$, then it has trivial fundamental group  as well as trivial second homotopy group, as the 3-dimensional sphere has. So $\pi_1(H)\cong\pi_1(G) $, and the result follows since   $\pi_1(\SL_3(\RR))=\pi_1(\SO(3))\cong\ZZ_2$.
\end{proof}

Of course Eq.~\eqref{eq_sec} gives that $G_{2,-14}$ is simply connected. In fact, the complex group $G_2$ is also  simply connected, but in order to apply the   exact sequence of groups to the coset manifold $G_{2}/\SL_3(\CC)\cong   \{X\in\CC^7:n(X)=1\}$, one would have   to understand well how is the manifold $\{X\in\CC^7:n(X)=1\}$. Instead,
we can conclude the simply connectedness in the complex case   from the fact that the determinant of the Cartan matrix in \eqref{eq_CartanmatrixG2} is 1, by using nonelementary facts on structure theory of Lie groups. (The determinant equal to 1 implies that the weight lattice $\sum\ZZ\omega_i$ and the root lattice  $\sum\ZZ\alpha_i$ coincide, but the fundamental group is related with the \emph{distance} between them.)  

\begin{remark}\label{re_elsc} 
The previous proposition implies that there are just three connected real Lie groups of type $G_2$, namely, $G_{2,-14}$, $G_{2,2}$ and its double covering $\widetilde G_{2,2}$. One can wish an explicit description of this simply connected Lie group $\widetilde G_{2,2}$, but note that this group is not a matrix Lie group: any of its linear representations    is not faithful, so it cannot be seen as a closed subgroup of $\GL_m(\RR)$ for any $m$.\footnote{If $  \widetilde G_{2,2}\to\GL_m(\RR)$ is a representation, take the associated Lie algebra representation of $\mathfrak g_{2,2}$, and extend it by complexification to a representation of $\mathfrak g_{2}=\mathfrak g_{2,2}^\CC$. As the complex group $G_2$ is simply connected, we may exponentiate the previous representation to one of $G_2$. Then we restrict it to $G_{2,2}\subset G_2$ and compose with the projection $\widetilde G_{2,2}\to G_{2,2}$ to obtain a $\widetilde G_{2,2}$-representation. It is not difficult to check that this representation coincides with the first one, since the corresponding Lie algebra representations are equal, and hence the center of $\widetilde G_{2,2}$ acts trivially. 
}
 (\cite[\S2]{Vogan} exposes  the basic structure theory of  $\widetilde G_{2,2}$.)\smallskip

However, there are many Lie groups of type $G_2$ if we remove the hypothesis of connectedness. For instance \cite{Harvey,Baez}, take the 
group 
$$
\{f\in\GL(V):f(x,y,z)=(f(x),f(y),f(z))\quad\forall x,y,z\in V\}=\{\pm f:f\in   G_{2,2}\}
$$
where the associator  in $V=\RR^7$ is defined by $(x,y,z):=(xy)z-x(yz)$, which is nonzero because the split octonion algebra $\cC=\RR\oplus V$ given by Eq.~\eqref{eq_procto} is not associative. The above group is isomorphic to $G_{2,2}\times \ZZ_2$, of course nonconnected. This example works similarly for the division  octonion algebra.
\end{remark}

\section{$G_2$ and   spinors 
}\label{se:nose}
 
Spinors were invented by Dirac in creating his relativistic quantum theory of the electron, but today they are relevant also 
in quantum theory, relativity, nuclear physics, atomic and molecular physics, and condensed matter physics. 
  Mathematician readers interested in seeing how
  spinors relates to modern theoretic physics 
  can consult   the introduction of the Srni lecture notes  \cite[Sections~1.3,1.4]{Agricola06}. \smallskip

In this section we will prove that, when the group $\spin(7)$ acts on the eight-dimensional spin representation, the subgroup stabilizing any unit \emph{spinor} is again our compact group of type  $G_2$. This happens not only for the  definite norm and the compact group $G_{2,-14}$, but for the norm of signature $(4,3)$ and the split group $G_{2,2}$, thus providing transitive actions on the sphere (pseudosphere, respectively) of dimension 7. 
%
We are going to make all the above completely precise. 

 We will work throughout the section with the real field, although all this works equally well in the complex setting. \smallskip
 

Recall first that if $n\colon V\to \RR$ is a nondegenerate quadratic form,
 the \emph{Clifford algebra} $\Cl(V,n)$ is 
a unital associative algebra that   is generated by   $V$ subject to the relations $v^2=n(v)1$ for all $v\in V$ (that is, the tensor algebra $  \cT(V)={\displaystyle \oplus _{k\geq 0}V\otimes \stackrel{k}{\cdots} \otimes V}$
quotient   by the two-sided ideal generated by elements $v\otimes v-n(v)1$).  The product induced by the tensor product in the quotient algebra is written using juxtaposition (e.g. uv). 
Of course Clifford algebras satisfy the following universal property: if $\rho\colon V\to A$ is a linear map into a unital associative algebra $A$ such that $\rho(v)^2=n(v)1_A$, then there is a unique homomorphism of associative algebras $\tilde\rho\colon \Cl(V,n)\to A$ such that $\tilde\rho\vert_V=\rho$.

If the dimension of $V$  is $m$ and $\{e_1, ..., e_m\}$ is an orthogonal basis of $(V, n)$, then a basis of  $\Cl(V,n)$
is   ${\displaystyle \{e_{i_{1}}e_{i_{2}}\cdots e_{i_{k}}\mid 1\leq i_{1}<i_{2}<\cdots <i_{k}\leq m{\mbox{ and }}0\leq k\leq m\}}$, so that $\dim \Cl(V,n)=\sum_{k=0}^m\left(\begin{array}{l}m\\k\end{array}\right)=2^m$. 
The natural $\ZZ$-grading on $\cT(V)$ given by assigning the degree $\deg(e_{i_{1}}\otimes e_{i_{2}}\cdots \otimes e_{i_{k}})=k$, induces a $\ZZ_2$-grading on $\cT(V)$ which is inherited by the Clifford algebra $\Cl(V,n)$, since the quotient ideal is homogeneous. Thus,  
$$
 \Cl(V,n)=\Cl_{\bar0}(V,n)\oplus\Cl_{\bar1}(V,n),
$$
being the even part of the Clifford algebra (respectively odd), denoted by $\Cl_{\bar0}(V,n)$ (resp. $\Cl_{\bar1}(V,n)$), spanned by elements which are products of an even (resp. odd) number of vectors in $V$. In particular,  $\dim\Cl_{\bar0}(V,n)=2^{m-1}$. If $m$ is even, then  $\Cl(V,n)$ is simple, and otherwise, $\Cl_{\bar0}(V,n)$ is simple (facts about real and complex Clifford algebras can be consulted in \cite[Chapter~9]{Harvey}). 
The spin group lives in the group of invertible elements of  $\Cl_{\bar0}(V,n)$, namely,
$$
\spin(V,n):=\{\pm a_1\dots a_{2r}: a_i\in V,\prod_{i=1}^{2r}n(a_i)=1\}.
$$
For each $a\in V$ with $n(a)\ne0$,  consider $\tau_a\colon V\to V$, $\tau_a(v)= v-\frac{2n(a,v)}{n(a)}a$ the reflection relative to the hyperplane orthogonal to $a$, which is an isometry with $\det\tau_a=-1$. It is easy to express this reflection in terms of the multiplication in the Clifford algebra: $\tau_a(v)=-ava^{-1}$, since $\tau_a(a)=-a$ and $\tau_a(v)=v$ for any $v$ orthogonal to $a$. (The element $a$ is invertible in $\Cl(V,n)$ since $a^2=n(a)1\in\RR$.) This permits us to define the group homomorphism
\begin{equation}\label{eq_doblerecubridor}
\spin(V,n)\to\SO(V,n),\quad \pm a_1\dots a_{2r}\mapsto   \tau_{a_1}\dots \tau_{a_{2r}},
\end{equation}
which is an epimorphism, since every orthogonal transformation is composition of reflections, by Dieudonné's theorem, and the determinant equal to 1 forces the number of reflections to be even.
 Besides the kernel is $\{\pm1\}\cong\ZZ_2$. 
 Indeed, if $ \tau_{a_1}\dots \tau_{a_{2r}}=\id_V$, then $a_1\dots a_{2r}$ commutes with $V$ and hence it belongs to the center of $\Cl(V,n)$. The center of the Clifford algebra depends on $m$ the dimension of $V$: If $m$ is even, the center is $\RR1$, but if $m$ is odd, the center is $\RR1\oplus\RR e_1\dots e_m$. In any case, as  $a_1\dots a_{2r}$ is in the center, then $r=0$.\smallskip

We will apply all the above to $V=\RR^7$ and $-n$ the opposite   of the norm related to any generic 3-form in $V$ (hence either $n$ is positive definite or $n$ has signature $(4,3)$).\footnote{The classical notation for $\Cl(V,n)$ is   $\Cl_{p,q}(\RR)$ if $(p,q)$ is the signature of $n$, and also $\Cl_{p}(\RR)$ if $q=0$.} Besides, let $\times$ be the related cross product and $\cC=\RR\oplus\RR^7$ the octonion algebra (either split or division) as in Eq.~\eqref{eq_procto}. (This change of notation makes easier to deal with both cases simultaneously.) We will  soon need some important properties of this (nonassociative) algebra, as for instance:
\begin{itemize}
\item[P1)] Alternativity: $x(xy)=(x^2)y$ and $(xy)x=x(yx)$ for all $x,y\in\cC$, in other words, the associator $(x,y,z)=(xy)z-x(yz)$ is alternating (obviously trilinear). 

It suffices to prove it if $x,y\in V$ (for $1$ it is evident). If $x$ and $y$ are orthogonal, $x(xy)=x\times(x\times y)=-n(x)y=(x^2)y$ by \eqref{eq_mainidentity}. And if $x=y$ (or scalar multiple), of course $(x,x,x)=-n(x)x+xn(x)1=0$.\smallskip

\item[P2)]  If $x=s+u$, $s\in\RR$, $u\in V$, we denote by $\bar x:=s-u$. Then $V=\{x\in\cC:\bar x=-x\}$ and $\tr(x)=2n(x,1)=x+\bar x$. Also, $x\bar x=s^2+n(u)=n(x)1$, so that every nonisotropic element is invertible. Besides, for all $x,y\in\cC$,
$$
\bar x\bar y=\overline{yx}.
$$

This is a straightforward computation taking into account the skew-symmetry of the cross product, since, for $x=s+u$, $y=t+v$ ($s,t\in\RR$, $u,v\in V$), $\bar x\bar y= st-n(u,v)1-tu-sv+u\times v  $ and $\overline{yx}  =st-n(v,u)1-\big(tu+sv+v\times u  \big)   $.\smallskip

\item[P3)]  First Moufang Identity: $(xax)y = x(a(xy))$ for all $x,y,a\in\cC$. 

This is true in any alternative algebra, since
$$
\begin{array}{rl}
(xax)y &-\ x(a(xy)) = (xa,x,y) + (x,a,xy) \stackrel{\textrm{P1}}{=} -(x, xa,y)-(x,xy,a) =\\
&= -(x^2a)y  + x((xa)y )- (x^2y)a+ x((xy)a) =\\
&= -(x^2, a, y) -x^2(ay) - (x^2,y,a) - x^2(ya) + x\big((xa)y + (xy)a\big) =\\
&= -x^2(ay+ya)+  x\big((xa)y + (xy)a\big)=\\
&= x\big( -x (ay+ya)+(xa)y + (xy)a \big)\stackrel{\textrm{P1}}{=}x\big((x,a,y)+(x,y,a)   \big)=0.
\end{array}
$$\smallskip
\item[P4)]  Second Moufang Identity: $(xy)(ax) = x(ya)x$ for all $x,y,a\in\cC$, since
$$
(xy)(ax) -x(ya)x=(x,y,ax)-x(y,a,x)\stackrel{\textrm{P1}}{=}-(x,ax,y)-x(y,a,x)
$$
but 
$$
-(x,ax,y)=x((ax)y)-(xax)y\stackrel{\textrm{P3}}{=}x\big( (ax)y-a(xy) \big)=x(a,x,y)\stackrel{\textrm{P1}}{=}x(y,a,x).
$$
\end{itemize}

Let $\rho\colon V\to \textrm{End}(\cC)$ be the map $\rho(u)=L_u$, for $L_u$ the left multiplication in $\cC$ given by $L_u(x)=ux$. The alternativity proved in (P1) provides $\rho(u)^2=L_{u^2}=L_{-n(u)1}=-n(u)\id$. As before, the universal property gives 
$\tilde\rho\colon\Cl(V,-n)\to \textrm{End}(\cC)$ a homomorphism of associative algebras and hence $\cC$ is an eight-dimensional representation of $\Cl(V,-n)$ (not faithful). The restriction to the even Clifford algebra 
\begin{equation}\label{def_spinors}
\tilde\rho\colon\Cl_{\bar0}(V,-n)\stackrel{\cong}{\longrightarrow} \textrm{End}(\cC)
\end{equation}
 is an isomorphism of associative algebras, since it is an injective linear map between vector spaces of the same dimension ($\dim\textrm{End}(\cC)=8^2=2^{7-1}$). The injectivity is a consequence of that $\ker(\tilde\rho)$ is an ideal of the simple algebra $\Cl_{\bar0}(V,-n)$ and the map is nonzero. Equation~\eqref{def_spinors} implies that $\cC$ is an irreducible $\Cl_{\bar0}(V,-n)$-representation. The spin group $\spin(V,-n)$ acts on $\cC$ via the Clifford algebra, 
 and this representation $\tilde\rho\vert_{\spin(V,-n)}$
 is called the \emph{spin representation}. Its elements (belonging to $\cC$, i.e. to $\RR^8$) are called \emph{spinors}. That is, if $x\in\cC$ and $g=\pm a_1\dots a_{2r}\in\spin(V,-n)$, for $a_i\in V,\prod_{i=1}^{2r}n(a_i)=1$, the spin action is
$$
g\cdot x=\tilde\rho(\pm a_1\dots a_{2r})(x)=\pm L_{a_1}\dots L_{a_{2r}}(x)=\pm a_1(a_2(\dots (a_{2r}x)\dots)).
$$
Let us check first that this is an action by isometries:\footnote{This is not consequence of \eqref{eq_doblerecubridor}.
 If $x\in V$, any element $g\in\spin(V,-n)$ acts also on $x$ by means of its image by  \eqref{eq_doblerecubridor} on the orthogonal group, but these are completely different actions: $  \tau_{a_1}\dots \tau_{a_{2r}}(x)\ne \pm a_1(a_2(\dots (a_{2r}x)\dots)).$  From a different viewpoint, $\spin(V,-n)$ acts on $V$ by  \eqref{eq_doblerecubridor}, but it cannot be considered a subgroup of $\GL(V)$. The least $m$ such that $\spin(V,-n)\subset \GL_m(\RR)$ is $m=8$.
 }  
 $\tilde\rho(\spin(V,-n))\subset \textrm{O}(\cC,n)$.
%
 If $x,y\in V$, recall that $L_x(y)=xy=-n(x,y)1+x\times y$, so that $n(L_x(y),z)=n(x\times y,z)=-n(y,x\times z)=-n(L_x(z),y)$ if $x,y,z\in V$. But $n(L_x(y),z)=-n(L_x(z),y)$ is also true for any $y,z\in \cC$: if $y=z=1$, then $n(L_x(y),z)=n(x,1)=0=-n(L_x(z),y)$; and if $y=1$, $z\in V$, then
$-n(L_x(z),y)=-n(xz,1)=-n(-n(x,z)1,1)=n(x,z)= n(L_x(y),z)$.

Consequently $\spin(V,-n)$ acts on the manifold $\{x\in\cC\mid n(x)=1\}$, which is the sphere $\mathbb{S}^7$ when $n$ is positive definite, and, following again O'Neill's notation, it is the pseudosphere $\mathbb{S}_4^7$ when $n$ is isotropic ($\mathbb{S}_4^7\equiv \mathbb{S}_4^7(1)$). 
Our goal is to check that this is a transitive action such that the isotropy subgroup of any element has type $G_2$. For the transitivity, we use the next auxiliary result.

\begin{lemma}
If $x\in\cC$, $n(x)=1$, then there is $a,b\in V$ such that $ab=x$.
\end{lemma}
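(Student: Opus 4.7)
The plan is to fix a nonisotropic $a \in V$ and to seek $b$ of the form $b = -ax/n(a)$. By alternativity (P1), $a(ax) = a^2 x = -n(a)\,x$, so $ab = -a(ax)/n(a) = x$ automatically. The whole task thus reduces to choosing $a$ so that this $b$ actually lies in $V$, i.e.\ has zero scalar component.

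Write $x = s + u$ with $s \in \RR$ and $u \in V$. For $a \in V$, equation \eqref{eq_procto} gives $ax = sa + au = -n(a,u)\cdot 1 + (sa + a \times u)$, so the trace of $ax$ equals $-2n(a,u)$ and $b \in V$ if and only if $a \perp u$. If $u = 0$ then $x = \pm 1$, and any nonisotropic $a \in V$ works, with $b = \mp a/n(a)$. So the remaining question is: for $u \ne 0$, can one find a nonisotropic element in the $6$-dimensional subspace $u^\perp \subset V$?

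In the positive definite case this is trivial, since $u^\perp$ inherits a positive definite form. In the signature $(4,3)$ case, any totally isotropic subspace of $V$ has dimension at most $3$, so $u^\perp$, being $6$-dimensional, must contain a nonisotropic vector. This signature step is the only part that requires a moment's thought; once $a$ is chosen the rest is formal. One could also worry that $b$ might vanish, but this would force $ab = 0 \ne x$; alternatively, $sa + a \times u = 0$ combined with the identity $a \times (a \times u) = -n(a)\,u$ (which follows from \eqref{eq_mainidentity} since $a \perp u$) would force $u = 0$.
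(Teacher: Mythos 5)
Your proof is correct, and it is genuinely different from the paper's. You exploit the explicit inverse formula: fix a nonisotropic $a\perp u$ in advance and set $b=-ax/n(a)$, so that $ab=a(a^{-1}x)=x$ follows from left alternativity, and the constraint $a\perp u$ is exactly what forces $b$ to have zero trace. The only nontrivial step is finding a nonisotropic $a$ in the $6$-dimensional subspace $u^\perp$, which you settle with the Witt-index bound ($\le 3$ in signature $(4,3)$). The paper instead embeds $x$ into a quaternion subalgebra $\cQ=\langle 1,u,v,u\times v\rangle$, picks a nonisotropic $w$ in the $4$-dimensional $\cQ^\perp$ (where nondegeneracy of $n|_{\cQ^\perp}$ makes this automatic, no Witt-index argument needed), shows $x\cQ^\perp=\cQ^\perp$ by invertibility of $x$, solves $xb=w$, and then writes $x=(xb)(\bar b/n(b))$ with both factors in $\cQ^\perp\subset V$. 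Your route is shorter and more direct, at the cost of one slightly less elementary signature fact; the paper's route is longer but introduces the quaternion subalgebra picture and avoids appealing to the Witt index. Both yield $a,b$ automatically nonisotropic (via $n(a)n(b)=n(x)=1$), which is what Proposition~\ref{pr_esferas7} actually needs. Your closing remark about $b\neq 0$ is a harmless digression, since $ab=x\neq 0$ already rules it out.
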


\begin{proof}
Take $x=s1+u$ with $u\in V$ and $n(x)=1$. If $u=0$, then $x=\pm1$ and the result is clear: take any $v\in V$ with $n(v)\ne0$ and then $v(\pm\frac{\bar v}{n(v)})=x$. If $u\ne0$ but $n(u)=0$, take $v\in V$ such that $n(u,v)\ne0$. Then $\cQ=\langle 1,u,v,u\times v\rangle$ is a 4-dimensional subalgebra of $\cC$ such that $n\vert_{\cQ}$ is nondegenerate (usually called a \emph{quaternion} subalgebra). Indeed, taking in mind \eqref{eq_mainidentity}, $u(u\times v)=n(u,v)u$, $uv=-n(u,v)1+u\times v$ and $v(u\times v)=-n(v,u)v+n(v)u$, all of them belonging to $\cQ$. If, on the contrary,   $n(u)\ne0$, take $v\in V$ orthogonal to $u$ but nonisotropic and again $\cQ=\langle 1,u,v,u\times v\rangle$ is a quaternion subalgebra of $\cC$
(now $uv=u\times v$, $u(u\times v)=-n(u)v$ and $v(u\times v)=n(v)u$).

In both cases, nondegeneracy gives $\cC=\cQ\oplus\cQ^\perp$ and there is $w\in\cQ^\perp$ with $n(w)\ne0$. Note that $x\cQ^\perp\subset \cQ^\perp$, since for any $y\in \cQ^\perp$ and $z\in\cQ$, 
$$
n(xy,z)
=sn(y,z)-n(y,uz)=n(y,\bar xz)\in n(\cQ^\perp,\cQ\cQ)\subset n(\cQ^\perp,\cQ)=0.
$$
As $x$ is invertible (P2), then $x\cQ^\perp=\cQ^\perp$ and there is $b\in \cQ^\perp$ with $w=xb$.
Hence $n(b)=n(w)\ne0$ and $x=(xb)\left(\frac{\bar b}{n(b)}\right)$   (P1) is a product of two elements in $  \cQ^\perp\subset  1^\perp=V$.
\end{proof}

With the notations above,

\begin{proposition}\label{pr_esferas7}
The group $\spin(V,-n)$ acts transitively on $\{x\in\cC\mid n(x)=1\}$.
The isotropy subgroup  $\{ g\in\spin(V,-n):g\cdot 1=1 \}$  is isomorphic to $\Aut(\cC)$.
\end{proposition}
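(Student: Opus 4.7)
The plan is as follows. For transitivity, I would apply the preceding lemma to any $x\in\cC$ with $n(x)=1$ to write $x=ab$ with $a,b\in V$; multiplicativity of $n$ gives $n(a)n(b)=n(x)=1$, so $g:=ab\in\Cl_{\bar 0}(V,-n)$ is an element of $\spin(V,-n)$, and
$g\cdot 1=L_aL_b(1)=a(b\cdot 1)=ab=x.$
Combined with the fact (already verified before the statement) that the spin action is by isometries of $(\cC,n)$, this identifies the orbit of $1$ with the full unit hyperquadric.

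For the isotropy, set $H:=\{g\in\spin(V,-n):g\cdot 1=1\}$ and define
$\Psi\colon H\to\End(\cC),\quad \Psi(g):=\tilde\rho(g).$
This is an injective group homomorphism, since $\tilde\rho$ is a faithful representation by the isomorphism~\eqref{def_spinors}, and for $g\in H$ the map $\Psi(g)$ is an isometry of $(\cC,n)$ fixing $1$, so it preserves $V=1^\perp$. The heart of the argument is to show $\Psi(H)\subseteq\Aut(\cC)$; once this is known, a dimension count will close everything.

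I would verify this containment infinitesimally. Let $\mathfrak h:=\Lie(H)\subseteq\mathfrak{spin}(V,-n)$. In an orthogonal basis $\{e_i\}$ of $(V,-n)$, $\mathfrak{spin}(V,-n)$ is spanned inside $\Cl_{\bar 0}(V,-n)$ by the bivectors $e_ie_j$ with $i<j$, and for $X=\sum c_{ij}e_ie_j$,
$\tilde\rho(X)(1)=\sum c_{ij}\,e_i(e_j\cdot 1)=\sum c_{ij}\,(e_i\times e_j),$
so $X\in\mathfrak h$ iff $\sum c_{ij}(e_i\times e_j)=0$. Because the seven-dimensional cross product is surjective as a linear map $V\times V\to V$, this is a rank-$7$ constraint, so $\dim\mathfrak h=\binom{7}{2}-7=14=\dim\Der(\cC)$. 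Using alternativity (P1) and the Moufang identities (P3, P4) to expand
$\tilde\rho(X)(ab)-\tilde\rho(X)(a)\,b-a\,\tilde\rho(X)(b),$
the accumulated associators reorganise into an expression proportional to $\sum c_{ij}(e_i\times e_j)$ applied to $a$ and $b$, which vanishes by the defining condition of $\mathfrak h$. Thus $\tilde\rho(\mathfrak h)\subseteq\Der(\cC)$, and by the dimension match $\tilde\rho(\mathfrak h)=\Der(\cC)$. Exponentiating gives $\Psi(H^0)=\Aut(\cC)^0=\Aut(\cC)$, the last equality being the connectedness already established in Corollary~\ref{co_conexoscompactos} and Lemma~\ref{pr_conreal}; together with the injectivity of $\Psi$, this forces $H=H^0$ and $\Psi$ to be the desired Lie group isomorphism onto $\Aut(\cC)$.

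The main obstacle is exactly this derivation check: in the non-associative setting, left multiplication by a Clifford bivector is not \emph{a priori} a derivation, and one has to patiently expand $x\mapsto e_i(e_jx)$ via alternativity and both Moufang identities until the non-derivational residue is collected into the single vector $\sum c_{ij}(e_i\times e_j)\in V$, which is then killed by the condition defining $\mathfrak h$. It is this non-obvious cancellation that realises $\Aut(\cC)=G_2$ as the stabiliser of a unit spinor inside $\spin(V,-n)$.
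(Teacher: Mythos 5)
Your transitivity argument is exactly the paper's: factor $x=ab$ with $a,b\in V$ via the preceding lemma, note $n(a)n(b)=1$, and observe $ab\in\spin(V,-n)$ sends $1$ to $x$. No issues there.

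Your route to the isotropy is genuinely different. The paper works at the group level: it introduces the triple product $\langle x,y,z\rangle=(x\bar y)z$, shows $\langle L_a x,L_a y,L_a z\rangle=n(a)L_a\langle x,y,z\rangle$ for a \emph{single} $a\in V$ using (P1)--(P4), deduces that every $g\in\spin(V,-n)$ preserves $\langle\,,\,,\,\rangle$, and then gets $g(xy)=g(x)g(y)$ from $g\cdot 1=1$ and $xy=\langle x,1,y\rangle$. You instead want to prove the Lie-algebra containment $\tilde\rho(\mathfrak h)\subseteq\Der(\cC)$ by directly expanding the derivation residue of $\tilde\rho(X)$ for $X=\sum c_{ij}e_ie_j$. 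Two gaps arise.

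First, the central computation is asserted, not carried out. You say the residue $\tilde\rho(X)(ab)-\tilde\rho(X)(a)b-a\tilde\rho(X)(b)$ ``reorganises into an expression proportional to $\sum c_{ij}(e_i\times e_j)$ applied to $a$ and $b$,'' which vanishes on $\mathfrak h$. That reorganisation is precisely the nontrivial content of the proposition and cannot be left as a claim; moreover it is genuinely harder than what the paper does, because the Moufang identities (P3), (P4) in their pristine form require a repeated variable, whereas for $[L_{e_i},L_{e_j}]$ with $i\neq j$ you would have to linearize them and track the cross terms. The paper's triple product $\langle x,y,z\rangle=(x\bar y)z$ is engineered exactly so that the Moufang identities apply with the same $a$ appearing twice, avoiding linearization.

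Second, even granting the computation, you only control the identity component. From $\tilde\rho(\mathfrak h)=\Der(\cC)$ you get $\Psi(H^0)=\Aut(\cC)$; but injectivity of $\Psi$ plus $\Psi(H^0)=\Aut(\cC)$ does \emph{not} force $H=H^0$ (a putative extra component $gH^0$ would map injectively onto a disjoint coset). You would need either a group-level argument, as in the paper, showing $\Psi(H)\subseteq\Aut(\cC)$ outright, or a separate topological argument that $H$ is connected (e.g.\ the homotopy exact sequence for $H\to\spin(V,-n)\to\{n(x)=1\}$, which requires knowing $\pi_0$ and $\pi_1$ of the total space and base in both the definite and the $(4,4)$ signatures). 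The paper sidesteps this by proving the group inclusion $\tilde\rho(H)\subseteq\Aut(\cC)$ directly, then invoking $\dim H=14=\dim\Aut(\cC)$ together with the connectedness of $\Aut(\cC)$ from Lemma~\ref{pr_conreal}.
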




\begin{proof}
Let $x\in\cC$ be an element with $n(x)=1$ and take $a,b\in V$ with $ab=x$ as in the previous lemma. Now  $ab\in\spin(V,-n)$ (take care with the abuse of notation, here $ab$ denotes the product of elements of $V$ in the Clifford algebra, while in the above line the product $ab$ was the octonionic product)  since $n(a)n(b)=n(x)=1$, and  besides $ab\cdot 1=L_aL_b(1)=ab=x$. Hence the orbit of the element $1\in\cC$ is the whole $\{x\in\cC:n(x)=1\}$. This finishes the transitivity of the action.\smallskip
 
Now take $H=\{g\in\spin(V,-n):g\cdot1=1\}$ the isotropy subgroup of $1\in\cC$.\footnote{Although $1$ is a remarkable element in the octonion algebra $\cC$, it is not a remarkable unit spinor, all of them are undistinguishable as a consequence of the transitivity.} We are going to prove that $\tilde\rho(H)$ (isomorphic to $H$) coincides with $\Aut(\cC)$. 
Consider a triple product in $\cC$ given by
\begin{equation}\label{triple}
\langle\ ,\ ,\ \rangle\colon \cC\times\cC\times\cC\to\cC,\qquad \langle x,y,z\rangle:=(x\bar y)z.
\end{equation}
(And observe that the octonionic product is recovered in some way from this triple product since $xy=\langle x,1,y\rangle$.)
Then, for any $x,y,z\in\cC$, $a\in V$,
$$
\begin{array}{ll}
\langle L_a x,L_a y,L_a z\rangle&=((ax)(\overline{ay}))(az)\stackrel{\textrm{P2}}=((ax)(\bar y\bar a))(az)\\
&=-((ax)(\bar y  a))(az)\stackrel{\textrm{P4}}=-(a(x\bar y)  a)(az)\stackrel{\textrm{P3}}=-a((x\bar y)  (a(az)))\\
& \stackrel{\textrm{P1}}=n(a)a((x\bar y)z)=n(a)L_a\langle x,y,z\rangle.
\end{array}
$$
Hence any $g\in\spin(V,-n)$ preserves the triple product: there are 
$a_1,\dots, a_{2r}\in V$ with $\prod_{i=1}^{2r}n(a_i)=1$ such that $g\cdot x=\pm L_{a_1}\dots  L_{a_{2r}}(x)$, and then
$$
\begin{array}{ll}
\langle g\cdot x,g\cdot y,g\cdot z\rangle&=(\pm1)^3n(a_1)L_{a_1}\dots n(a_{2r})L_{a_{2r}}\langle x,y,z\rangle\\
&=\pm L_{a_1}\dots  L_{a_{2r}}\langle x,y,z\rangle=g\cdot \langle x,y,z\rangle.
\end{array}
$$
Now, if $g\in\spin(V,-n)$ fixes the element $1\in\cC$ (consider $g$ as an endomorphism of $\cC$ by means of $\tilde\rho$, that is, $g(x):=\tilde\rho(g)(x)= g\cdot x$), then
$$
g(xy)=g(\langle x,1,y\rangle)=\langle g\cdot x,g\cdot 1,g\cdot y\rangle=\langle g\cdot x,1,g\cdot y\rangle=g(x) g(y),
$$
and $g\in\Aut(\cC)$. This proves $\tilde\rho(H)\subset \Aut(\cC)$. Besides 
$$
\begin{array}{ll}\dim H&=\dim\spin(V,-n)-\dim (\spin(V,-n)\cdot 1)\\&=
\dim\SO(V,-n)-\dim\{x\in\cC:n(x)=1\}=\left(\begin{array}{l}7\\2\end{array}\right)-7=14,  
\end{array}
$$ 
what implies that the connected components of $\tilde\rho(H)$ and of $\Aut(\cC)$ coincide. As we proved  that $\Aut(\cC)$ is connected (see Lemma~\ref{pr_conreal}), we conclude that  $\tilde\rho(H)= \Aut(\cC)$.
\end{proof}

This gives immediately,

\begin{corollary}\label{co_esferas7}
Consider $\mathcal C=\RR^8$ endowed with a norm $n$ either positive definite or of signature $(4,4)$, take 
$x\in\cC$ with $n(x)=1$, and $V=\langle x\rangle^\perp\cong\RR^7$.
Consider the spin representation of $\spin(V,-n)$ on $\cC$ given by the restriction of \eqref{def_spinors}.
Then
the stabilizer  of the unit spinor $x$,
$$
\{ g\in\spin(V,-n):g\cdot x=x \}\cong\left\{ \begin{array}{ll}
G_{2,-14}&\text{if $n$ is definite,}\\
G_{2,2}&\text{otherwise,}
\end{array}  \right.
$$
and hence
\begin{equation}\label{eq_spincocienteG2}
\spin(7)/G_{2,-14}\cong \mathbb{S}^7,\quad\qquad\spin(3,4)/G_{2,2}\cong \mathbb{S}_4^7.
\end{equation}
\end{corollary}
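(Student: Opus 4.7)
The plan is to reduce the statement to Proposition~\ref{pr_esferas7}, which handles the special case where the chosen unit spinor coincides with the octonion unit.

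First I would give an octonion algebra structure on $\cC$ having $x$ as multiplicative unit. The restricted form $n|_V$ on $V = \langle x \rangle^\perp$ is positive definite in the definite case and of signature $(4,3)$ in the signature $(4,4)$ case; in either case $V$ carries a cross product $\times$ compatible with $n|_V$, via the compact construction in Section~\ref{se:octo} using $\Omega_1$ and via Lemma~\ref{le_cross} respectively. Then the product $XY := -n(X,Y)x + X \times Y$ for $X, Y \in V$, extended so that $x$ is the two-sided unit, turns $\cC = \RR x \oplus V$ into an octonion algebra with norm $n$, which is division in the positive definite case and split in the signature $(4,4)$ case. With this structure, the spin representation considered in the Corollary is precisely the one in \eqref{def_spinors}.

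Second, Proposition~\ref{pr_esferas7} then applies directly: $\spin(V,-n)$ acts transitively on $\{y \in \cC : n(y) = 1\}$, and the stabilizer of $x$ is isomorphic to $\Aut(\cC)$. Corollary~\ref{co_conexoscompactos} identifies this automorphism group with $G_{2,-14}$ in the positive definite case, and Corollary~\ref{co_conexossplit} identifies it with $G_{2,2}$ in the signature $(4,4)$ case. The diffeomorphisms~\eqref{eq_spincocienteG2} then follow from the orbit-stabilizer theorem: $\spin(V,-n)/\Stab(x)$ is diffeomorphic to the orbit $\spin(V,-n) \cdot x$, which by transitivity is the entire hypersurface $\{y \in \cC : n(y) = 1\}$, namely $\mathbb{S}^7$ in the positive definite case and $\mathbb{S}_4^7$ in the signature $(4,4)$ case, in O'Neill's notation.

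The only real subtlety is the first step — matching the Corollary's spin representation to that of \eqref{def_spinors} by using the freedom to let any unit spinor play the role of the octonion unit — after which the conclusion is a direct translation of Proposition~\ref{pr_esferas7} together with the preceding identifications of $\Aut(\cC)$ with the two real forms of $G_2$.
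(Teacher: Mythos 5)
Your proposal is correct and captures exactly the content the paper intends; the paper itself records the corollary as an immediate consequence of Proposition~\ref{pr_esferas7}, relying on the footnote that all unit spinors are "undistinguishable" (i.e.\ transitivity makes the stabilizer of any $x$ conjugate to the stabilizer of $1$, hence isomorphic to $\Aut(\cC)$). You reach the same conclusion by the equivalent route of explicitly building the octonion structure on $\cC$ with $x$ as its identity so that Proposition~\ref{pr_esferas7} applies verbatim; this is slightly more work but has the merit of making literal sense of the prescription $V=\langle x\rangle^\perp$ in the corollary and of pinning down what "the restriction of \eqref{def_spinors}" means for an arbitrary $x$. The one small gloss is the phrase "carries a cross product compatible with $n|_V$": Lemma~\ref{le_cross} and the $\Omega_1$ construction produce a cross product for one specific quadratic form of the correct signature, and one must transport it by an isometry to the given $n|_V$ --- harmless, since the isomorphism type of the resulting stabilizer is all that the corollary asserts.
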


Observe that the fundamental groups of the real Lie groups of type $G_2$
computed in Proposition~\ref{co_simpleconexion} 
  agree with the ones we would have obtained by looking at the coset manifolds above, 
 since 
$$\pi_1(\spin(7))=\pi_1(G_{2,-14})=0,\qquad \pi_1(\spin(3,4))=\pi_1(G_{2,2})=\ZZ_2  .$$
(Recall that the spin groups are nonnecessarily simply connected if the signature is not definite.)\smallskip


Both homogeneous spaces in Eq.~\eqref{eq_spincocienteG2} are naturally reductive homogeneous spaces. The algebraical structure produced in $(\mathfrak m,\frac12[\ ,\ ]_{\mathfrak m})$ is isomorphic to $(V,\times)$ for $V=\RR^7$ and $\times$ the related cross product as in the above sections. We can observe that this is a simple Malcev (non-Lie) algebra. Many geometrical properties about torsion and curvature on $\mathbb{S}^7$ and $\mathbb{S}_4^7$ can be concluded from this algebraical fact. A more detailed study of these reductive homogeneous spaces, jointly with 
the determination of their invariant affine connections, can be found in 
\cite{s7Alb}.\smallskip

We can also conclude, taking into account Propositions~\ref{le_elsigma} and \ref{le_comoelsigma}, besides Corollary~\ref{co_esferas7}, that
\begin{corollary}
\
\begin{itemize} 
\item[i)] If $n$ is positive definite and $\varphi_1,\varphi_2 \in\cC$ are two orthogonal unit spinors,
$$ 
\{g\in\spin(V,-n):g\cdot \varphi_1=\varphi_1,g\cdot \varphi_2=\varphi_2\}\cong\mathrm{SU}(3) .
$$
Thus, $\spin(7)/\mathrm{SU}(3)\cong V_{2}(\RR^8)$ is the Stiefel manifold of orthogonal pairs of unital elements in $\RR^8$.
\item[ii)]  If $n$ has signature $(4,3)$
and $\varphi_1,\varphi_2\in\cC$ are two orthogonal   spinors with $n(\varphi_1)=1=-n(\varphi_2)$, then 
$$\{g\in\spin(V,-n):g\cdot \varphi_1=\varphi_1,g\cdot \varphi_2=\varphi_2\}\cong\SL_3(\RR).$$ 
Thus,
$$\frac{\spin(3,4)}{\mathrm{SL}_3(\RR)}\cong V_{1,1}(\RR^{4,4}):=\{a\in\Mat_{8,2}(\RR):a^t  \tiny\left(\begin{array}{cc}I_4&0\\0&-I_4\end{array}\right)a=\tiny\left(\begin{array}{cc}1&0\\0&-1\end{array}\right)\}.$$ 
\end{itemize}    
\smallskip 
\noindent Furthermore, if we consider a third spinor, 
the stabilizer of the three spinors  will be either $\mathrm{SU}(2)$ or $\mathrm{SL}_2(\RR)$.\smallskip
\end{corollary}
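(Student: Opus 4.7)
The plan is to reduce both statements to the previously established analysis of the $G_2$-action on $V$. By Proposition~\ref{pr_esferas7}, the group $\spin(V,-n)$ acts transitively on unit spinors, so in both cases we may conjugate to assume that $\varphi_1 = 1 \in \cC$. Corollary~\ref{co_esferas7} then identifies the stabilizer of $\varphi_1$ with $\Aut(\cC)$, which is $G_{2,-14}$ in case (i) and $G_{2,2}$ in case (ii). Since every $g \in \Aut(\cC)$ fixes $1$ and preserves $n$, it restricts to an isometry of $V = \langle 1\rangle^\perp$, and this restriction is faithful because octonion automorphisms are determined by their action on $V$. Thus the simultaneous stabilizer of $\varphi_1$ and $\varphi_2$ equals the stabilizer of $\varphi_2 \in V$ under $\Aut(\cC)$ acting on $V$.

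In case (i), $\varphi_2 \in V$ satisfies $n(\varphi_2)=1$, so Proposition~\ref{le_elsigma} immediately gives stabilizer $\mathrm{SU}(3)$. In case (ii), $\varphi_2 \in V$ satisfies $n(\varphi_2)=-1$, so Proposition~\ref{le_comoelsigma} yields stabilizer $\SL_3(\RR)$. For the homogeneous space identifications, one first observes that $\spin(V,-n)$ acts on ordered pairs $(\varphi_1,\varphi_2)$ satisfying the prescribed norm and orthogonality conditions, and that this action is transitive: the first spinor can be moved by Proposition~\ref{pr_esferas7}, and then Propositions~\ref{le_elsigma} and~\ref{le_comoelsigma} respectively give transitivity of the stabilizer $G_2$ on admissible choices of $\varphi_2 \in V$. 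The orbit maps then realize $V_2(\RR^8)$ and $V_{1,1}(\RR^{4,4})$ as the corresponding coset manifolds, using connectedness of $\spin(7)$ and of the identity component of $\spin(3,4)$ together with the matching of dimensions ($21 - 8 = 13 = \dim V_2(\RR^8)$, and similarly $21-8=13=\dim V_{1,1}(\RR^{4,4})$).

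For the final statement on three spinors, one iterates the procedure. A third spinor $\varphi_3$ orthogonal to $\varphi_1 = 1$ and to $\varphi_2$ lies in $W := V \cap \varphi_2^\perp$. In case (i), $W$ carries the complex structure $J(Y) = \varphi_2 \times Y$ of Proposition~\ref{le_elsigma}, making it a Hermitian $\CC^3$, and $\mathrm{SU}(3)$ acts on $W$ as the standard representation; the stabilizer of the nonzero real vector $\varphi_3$ agrees with the stabilizer of the complex line $\CC\varphi_3$ together with the condition $g(\varphi_3)=\varphi_3$, which is $\mathrm{SU}(2)$. In case (ii), the decomposition $W = W^+ \oplus W^-$ of Proposition~\ref{le_comoelsigma} exhibits $W$ as the sum of the natural and dual $\SL_3(\RR)$-modules; decomposing $\varphi_3 = \varphi_3^+ + \varphi_3^-$ and using that the action on $W^-$ is contragredient to the one on $W^+$, the simultaneous stabilizer is the set of matrices in $\SL_3(\RR)$ fixing a nonzero vector and a nonzero covector, which is $\SL_2(\RR)$.

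The main obstacle will be the coset identification with the concrete Stiefel manifolds, where one must verify that the set-theoretic orbit map is actually a diffeomorphism (not merely a bijection); this uses the standard argument that if a Lie group $G$ acts transitively and smoothly on a manifold $M$ with closed stabilizer $H$, then $G/H \to M$ is a diffeomorphism, combined with the dimension counts above. The identification of the stabilizer of three spinors with $\SL_2(\RR)$ in case (ii) is slightly delicate because $\varphi_3$ may have components in both $W^+$ and $W^-$, but the faithful action on $W^+$ plus the compatibility constraint between the two components reduces the computation to the standard stabilizer of a nonzero vector in the natural representation.
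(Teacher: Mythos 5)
Your proof is correct and follows essentially the same route the paper intends: it reduces the two-spinor stabilizer to the one-spinor case via Corollary~\ref{co_esferas7}, then invokes Propositions~\ref{le_elsigma} and~\ref{le_comoelsigma} (after translating the stabilizer of $\varphi_1=1$ into $\Aut(\cC)$ acting on $V=\langle 1\rangle^\perp$), exactly the ingredients the paper cites for this corollary without writing out the details. Your treatment of the third spinor is also sound; the one implicit point worth noting is that in case (ii) the conclusion $\SL_2(\RR)$ relies on $\varphi_3$ being nonisotropic, which forces both its $W^+$ and $W^-$ components to be nonzero with nonvanishing pairing, a constraint you implicitly use.
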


%
%
%

\begin{remark}\label{re_modeloIlka}
When $n$ is positive definite, we are going to see all the above in a more concrete way adapted from \cite[\S2.1]{agrilecturenotes} to try to relate the different approaches and models. If $\{e_i\}_{i=1}^7$ is the canonical basis of the subspace of the zero trace elements in $\OO$ as in Figure~\ref{fig_fano}, and we denote by $\kappa_i:=\rho(e_i)=L_{e_i}$, then we have that $\{\kappa_i\}_{i=1}^7\subset\frso(\OO,n)$ is a family of skew-symmetric operators such that 
$$ 
\kappa_i^2=-\id,\qquad  \kappa_i\kappa_j=-\kappa_j\kappa_i \ \textrm{ if   $i\ne j$.}
$$
If $e_8$ denotes the identity element in $\OO$ and $\varphi_{u,v}:=n(u,-)v-n(v,-)u\in\frso(\OO,n)$, then  $\varphi_{ij}:=\varphi_{e_i,e_j}$ sends $e_i$ to $e_j$ and $e_j$ to $-e_i$,   
 and
$$
\begin{array}{ll}
\kappa_1=-\varphi_{18}+\varphi_{23}+\varphi_{47}-\varphi_{56},&\kappa_{5}=\varphi_{ 16}-\varphi_{27 }-\varphi_{34 }-\varphi_{58 } ,\\
\kappa_2=-\varphi_{13}-\varphi_{28}+\varphi_{46}+\varphi_{57}, &\kappa_{6}=-\varphi_{ 15}+\varphi_{24 }-\varphi_{37 }-\varphi_{68 },\\
\kappa_{3}=\varphi_{12 }-\varphi_{38 }-\varphi_{45 }+\varphi_{67 },&\kappa_{7}=\varphi_{ 14}+\varphi_{25 }+\varphi_{36 }-\varphi_{78 }.\\
\kappa_{4}=-\varphi_{17 }-\varphi_{26 }+\varphi_{35 }-\varphi_{48 },&
\end{array}
$$
Note that the map $\frso(V,-n)\to\Cl(V,-n)^-$ given by  $\varphi_{u,v}\mapsto -\frac12[u,v]$ is a Lie algebra monomorphism.\footnote{If 
$\mathcal A$ is an associative algebra, then $\mathcal A^-$ denotes the Lie algebra defined over the same vector space by the Lie bracket given by the commutator $[a,b]=ab-ba$.
}
Thus we can consider $\frso(V,-n)$ living in $\Cl(V,-n)^-$, and $\mathfrak{spin}(V,-n)$ the related copy of $\frso(V,-n)$ contained in $\tilde\rho(\Cl_{\bar0}(V,-n))=\frgl(\OO)$. That is, 
  $\mathfrak{spin}(V,-n) =[\rho(V),\rho(V)]$, which    is then spanned by $\{\kappa_i\kappa_j=\frac12[\kappa_i,\kappa_j]:1\le i< j\le7\}$. 
  
  As we have shown that   $\Aut(\OO)=\tilde\rho\left(\{g\in\spin(V,-n):g\cdot e_8=e_8\}\right)
 $, then 
  $$   \{d\in\mathfrak{spin}(V,-n):d(e_8)=0\}=\Der(\OO)\cong \Der(V,\Omega_1)=\frg_c$$
provides an isomorphism by means of the restriction map $d\mapsto d\vert_V$. Observe that an arbitrary element in
$\mathfrak{spin}(V,-n)$ does not act on $V$ (but on $\OO$), but it does if it annihilates  $\langle e_8\rangle=V^\perp$.
 
  Thus $\frg_c$ can be identified with the set of   elements $d=\sum a_{ij}\kappa_i\kappa_j\in\mathfrak{spin}(V,-n)$ with $d(u)=0$ ($u\in\OO$ any nonzero fixed element), what provides some linear equations in the coefficients $a_{ij}$. To be precise, if we choose $u=e_8$, 
then $0=d(u)=-\sum_{i<j} a_{ij}\kappa_i(e_j)\in V$, whose coordinates relative to the basis $\{e_i\}_{i=1}^7$ are zero, that is,
\begin{equation}\label{eq_ecddeg2} 
\frg_c=\left\{\begin{array}{lcr} &&a_{14 }+ a_{ 25}+ a_{36 }=0\\\sum a_{ij}\kappa_i\kappa_j\in\mathfrak{spin}(V,-n):
 &a_{23 }+ a_{47 }- a_{56 }=0
 &a_{13 }- a_{ 46}- a_{57 }=0\\ 
 &a_{ 12}- a_{45 }+ a_{67 }=0
 & a_{17 }+ a_{26 }- a_{35 }=0\\
 &a_{ 16}- a_{27 }- a_{34 }= 0
 & a_{15 }- a_{24 }+ a_{37 }=0\end{array}\right\}.
\end{equation}
Observe that  the matrix of $\varphi_{i,j}$ relative to the above basis is $E_{ji}-E_{ij}$
so that  $\sum_{i<j} a_{ij}\kappa_i\kappa_j\mapsto -(a_{ij})$ with $a_{ji}=-a_{ij}$ is an isomorphism of $\mathfrak{spin}(V,-n)$ onto the Lie algebra $\frso(7)$ of the skewsymmetric matrices of size 7.

According to the previous sections, the subalgebra 
  $   \mathfrak{h}= \{d\in\mathfrak{spin}(V,-n):d(e_8)=0=d(e_7)\}$ should be isomorphic to $\frsu(W,\sigma).$
  In order to understand it   directly, we compute   
  $$    \mathfrak{h}= \left\{\begin{array}{lll}
  \sum a_{ij}\kappa_i\kappa_j: &a_{i7}=0\,\forall i &  a_{14 }+ a_{ 25}+ a_{36 }=0\\
  &a_{26}=a_{35} &a_{16}=a_{34}\quad
  a_{15}=a_{24} \\
  &a_{23}=a_{56} &a_{12}=a_{45}\quad a_{13}=a_{46}
 \end{array}\right\},
  $$
  so that the precise isomorphism $\psi\colon\mathfrak{h}\to \frsu(3)=\{c\in \Mat_3(\CC):c+\bar{c}^t=0\}$ is given by
  \begin{equation}\label{eq_iso1}
  \psi \left( \begin{array}{ccc} a&b&0\\-b^t&a&0\\0&0&0   \end{array} \right)= a+\mathbf{i} b
   \end{equation}
   for $a,b\in \Mat_3(\RR), a=-a^t, b=b^t,\tr(b)=0$.\smallskip
   
  The choice of an $\mathfrak{h}$-invariant complementary subspace should allow us to recover the compact model in  Proposition~\ref{pr_modelocompacto}.
  We denote by
  $$
   \mu_{x,y}:=\left( \begin{array}{ccc}  l_y&l_x&2x\\l_x&-l_y&2y\\-2x^t&-2y^t&0   \end{array} \right)
  $$
  where $l_x$ is the coordinate matrix of the cross product in $\RR^3$ as in Eq.~\eqref{loslx}. If $\mathfrak{m}:=\left\{ \mu_{x,y} :x,y\in\RR^3 \right\}$, clearly it is the required subspace, since  $\mathfrak{g}_c= \mathfrak{h}\oplus\mathfrak{m}$ 
  and  
  $$
  \left[\left( \begin{array}{ccc} a&b&0\\-b^t&a&0\\0&0&0   \end{array} \right),\mu_{x,y}\right]=\mu_{ax+by,ay-bx}.
  $$
 Thus we can extend the Lie algebra isomorphism \eqref{eq_iso1} to an isomorphism from 
 $$\psi\colon\mathfrak{g}_c\to \mathcal L=\frsu(W)\oplus W,\qquad\psi(\mu_{x,y})= -y-\mathbf{i}x\in\CC^3=W,
 $$
  because
  $$
  \begin{array}{l}
  {[}\mu_{x,y},\mu_{u,v}]_\mathfrak{m}=2\mu_{\,y\times u+x\times v,\,x\times u-y\times v},\vspace{8pt}\\
   {[}\mu_{x,y},\mu_{u,v}]_\mathfrak{h}=3\psi^{-1}\big( l_{y\times v+x\times u}+\mathbf{i}\textrm{pr}_0(uy^t+yu^t-vx^t-xv^t) \big),
   \end{array}
  $$
  where $\textrm{pr}_0(b)=b-\frac13\tr(b)I_3\in\frsl_3(\RR)$ denotes the projection on the special linear algebra. (Recall $\tr(uy^t)=u^ty$.)
  Furthermore, we can translate the natural action of $\frg_c\subset\frso(7)$ on the irreducible real representation $\RR^7$ given by the usual matrix product. For $\psi'\colon\RR^7\to\mathcal{V}=\RR\oplus \CC^3$ given by 
  $$\psi'\left( \begin{array}{ccc} x\\y\\s   \end{array} \right)=s-x+\mathbf{i}y,$$
 one easily checks   that  $\psi(d)\cdot\psi'(X)=\psi'(dX)$, which provides an alternative proof of Proposition~\ref{pr_modelocompacto}.
  \smallskip

  Note that if we repeat all the above for isotropic $n$, we would get a model analogous to the split one in Proposition~\ref{pr_defdeL} (with blocks $3+3+1$ now). 
 \end{remark}

\begin{remark}
All these points of view are, of course, interconnected. If $n$ is positive definite, any choice of spinor provides a 3-form in such a way that the group fixing the spinor coincides with the group fixing the related 3-form by the usual pullback. 

Indeed, take $0\ne u\in\cC$. We have $\cC=\RR u\oplus \langle u\rangle^\perp$, but, by means of the action of $\Cl(V,-n)$, $1\cdot u=u$ and $\rho(V)(u)\subset \langle u\rangle^\perp$ (recall $n(L_vu,u)=0$ since $L_v$ is skew-adjoint). As any (nonzero) element in $V$ is invertible in the Clifford algebra, $V\cdot u=\rho(V)(u)= \langle u\rangle^\perp$  by dimension count. Take $A_u\colon V\times V\to V$ the multiplication defined by
$$
A_u(y,x)\cdot u=y\cdot (x\cdot u)+n(x,y)u.
$$
(Since $n(y\cdot (x\cdot u)+n(x,y)u,u)=0$, it follows that there is exactly one element $v\in V$ such that $v\cdot u=y\cdot (x\cdot u)+n(x,y)u$.) This provides a skew-symmetric $(1,2)$-tensor as $y\cdot (x\cdot u)+x\cdot (y\cdot u)=\tilde\rho(y  x+x  y)(u)=\tilde\rho(-2n(x,y)1)(u)=-2n(x,y)u$ and so $A_u(y,x)=-A_u(x,y)$. It is not difficult to check also that
$A_u(x,y)$ is orthogonal to $x$ and $y$ in $V$, and that
$n(A_u(x,y))=n(x)n(y)-n(x,y)^2$, by using that $n(x  u,y  u)=n(x,y)n(u)$ in $\cC$. Hence $A_u$ is a cross product for $n$ and 
the map $\omega_u\colon V\times V\times V\to \RR$   given by 
$$
\omega_u(x,y,z)=n(x,A_u(y,z))
$$ 
is the required 3-form.

If $n$ is split we need $u\in\cC$ nonisotropic to assure first that $\cC=\RR u\oplus \langle u\rangle^\perp$. Then all works similarly to the definite case since $V\cdot u=\langle u\rangle^\perp$ (simply take into account that we can find a basis of $V$ formed by nonisotropic vectors), and then $\omega_u$ defined as before is a 3-form such that $g^*\omega_u=\omega_u$ if $g\in\spin(V,-n)$ with $g\cdot u=u$.

The arguments become closed since we can define in the set of spinors $\RR^8=\RR u\oplus \langle u\rangle^\perp$ a product making it an octonion algebra such that $u$ is its identity element, simply by using the cross product related to the 3-form $\omega_u$. \smallskip

Observe that the spinorial representation $\cC$ is $\spin(V,-n)$-irreducible, but not $G_2$-irreducible: $\cC=\RR u\oplus \langle u\rangle^\perp$ is the decomposition as a sum of two irreducible $G_2$-modules. In fact, $\langle u\rangle^\perp$ is the nontrivial representation for $G_2$ of the least possible dimension, 7.

\end{remark}

\begin{remark}
The triple product described in Eq.~\eqref{triple} is closely related to   generic 3-forms. 
(Again, it is not important if $\FF$ is either $\RR$ or $\CC$, $V=\FF^7$ and $\cC=\FF\oplus V$ is an octonion algebra.)
First, note that
$$
\Lambda\colon V\times V\times V\times V\to\FF, \quad \Lambda(x,y,z,u)=\frac12n(x,\langle y,z,u\rangle -\langle u,z,y\rangle )
$$
is multilinear and alternating, that is, a 4-form\footnote{ Note that $\langle y,z,u\rangle -\langle u,z,y\rangle=\Omega(y,z,u)-(y,z,u)$ in $\cC$, so that $\Lambda(x,y,z,u)=-\frac12n(x,(y,z,u))$ since $x\in V$.  In the compact case,  $\Lambda$ is called the \emph{coassociative 4-form} for $\OO$ in \cite[6]{Harvey}.   }: 
If one of the last  three  entries is repeated, then $\langle y,z,u\rangle -\langle u,z,y\rangle =0$ is an immediate consequence of the alternativity (P1).
Also, $2\Lambda(x,x,y,z)=-n(x,(xy)z)+n(x,(zy)x)=n(xz,xy)-n(x^2,zy)=n(x)n(z,y)-x^2n(1,zy)=0$ since $x^2=-n(x)1$ and $L_y$ is skew-adjoint for the norm. 

Of course the group $G_2=\Aut(\cC)$ preserves this 4-form because $G_2$ preserves both the norm and the triple product.  Furthermore, $G_2$ can  also be characterized as the group preserving the 4-form.\footnote{
To be more precise, this happens in both the real cases. In the complex case, the group preserving the 4-form is not $G_{\Omega_0}$ but $G_{\Omega_0}^+\times\{\mathbf{i}^s\id_V:s=0,\dots,3\}$ (their identity components coincide), since the maps of determinant 1 preserve the volume form and hence $\Omega_0\wedge \Lambda$.  
}
 To this purpose, simply observe that for our generic $G_2$-invariant 3-form
$$
\Omega\colon   V\times V\times V\to\FF, \quad \Omega(x,y,z)=n(xy,z),
$$
we have $0\ne \Omega\wedge\Lambda\in\wedge^7V^*\cong\FF$. We will check this fact in the definite case (hence, it will be also true in the complex case, and so in the real-split):   It is a straightforward computation, using the products in Figure~\ref{fig_fano}, that if $0\ne(e_i,e_j,e_k)$ (if these three elements do not associate, they are not in the same line in the Fano plane, hence), then there is $l$ a different index such that $\langle e_i,e_j,e_k\rangle -\langle e_k,e_j,e_i\rangle=\pm2e_l$, and we compute
$$
\Lambda=-e^{1245}+e^{1267 }-e^{ 1346}-e^{1357 }+e^{ 2347}-e^{2356 }-e^{ 4567},
$$
with the notation used in Eq.~\eqref{eq_omegandefinida}. Thus,
$$
\Omega_1\wedge \Lambda=-7 e^{1234567}\ne0.
$$
(With a different notation, this means that the Hodge star operator $\star\Omega_1=\Lambda$, up to a scalar depending on the choice of a fixed nonzero 7-form.)
\end{remark}

\begin{remark}
I.~Agricola pointed me out that it is not difficult to characterize the three real and complex algebras of type $G_2$ in the language of $p$-forms. We will do it in the compact case. We identified $\frso(V,-n)$ with $\wedge^2V^*$ as vector spaces by means of
$
\varphi_{i,j}\mapsto e^{ij},
$
which allows to see $\frg_c\subset \wedge^2V^*$ as in Remark~\ref{re_modeloIlka} and also to have a Lie bracket in $\wedge^2V^*$.\footnote{The Lie bracket in $\wedge^2V^*$ becomes  $[\alpha_1\wedge\beta_1, \alpha_2\wedge\beta_2]= n(\alpha_1,\alpha_2) \beta_1\wedge\beta_2
-n( \beta_1,\alpha_2)\alpha_1\wedge\beta_2
-n(\alpha_1,\beta_2) \beta_1\wedge\alpha_2
+n( \beta_1,\beta_2)\alpha_1\wedge\alpha_2 $, for $\alpha_i,\beta_i$ 1-forms.} Then,
$$
\frg_c=\{\alpha\in \wedge^2V^*:\star(\Omega_1\wedge\alpha)=\alpha\},
$$
and $\wedge^2V^*=\frg_c\oplus \mathfrak{m}$ for the $\frg_c$-invariant complement
\begin{equation}\label{eq_comp_m}
\mathfrak{m}=\{u\intprodl\Omega_1:u\in V\}\le \wedge^2V^*,
\end{equation}  
which can be also characterized as
\begin{equation}\label{eq_caracV}
\mathfrak{m}=\{\alpha\in \wedge^2V^*:\star(\Omega_1\wedge\alpha)=-2\alpha\}.
\end{equation}
We are now trying to provide justifications of the above facts.
First note that $F$ is diagonalizable with real eigenvalues, since it is a symmetric endomorphism relative to the scalar product $\langle\ ,\ \rangle $  in $\wedge^2V^*$ induced by $n$. Indeed, for $\alpha,\beta\in \wedge^2V^*$,
$$\langle F(\alpha),\beta\rangle e^{1234567}=\Omega_1\wedge \alpha\wedge \beta=
\Omega_1\wedge \beta\wedge \alpha=\langle F(\beta),\alpha\rangle e^{1234567}.
$$
Besides the sum of the eigenvalues is zero, since its matrix relative to the basis $\{e^{ij}:1\le i<j\le7\}$ has all the entries in the diagonal equal to zero and in particular zero trace. 
For easy computations, one can use the coassociative 4-form, as $F(\alpha)=-\alpha\intprodl\Lambda$.\smallskip

All this is better understood with the help of a convenient grading of $\wedge^2V^*$ compatible with $F$.
Take the $\ZZ_2^3$-grading on the octonion algebra $\OO$ given by
$$
\OO_{(\bar 1,\bar 0,\bar 0)}=\langle e_1\rangle, \qquad
\OO_{(\bar 0,\bar 1,\bar 0)}=\langle e_2\rangle, \qquad
\OO_{(\bar 0,\bar 0,\bar 1)}=\langle e_7\rangle.
$$
That is, the basic elements $e_i$ are all of them homogeneous, with degrees
$\deg(e_3)=(\bar 1,\bar 1,\bar 0)$, $\deg(e_4)=(\bar 1,\bar 0,\bar 1)$, $\deg(e_5)=(\bar 0,\bar 1,\bar 1)$,
$\deg(e_6)=(\bar 1,\bar 1,\bar 1)$ and $\deg(1)=(\bar 0,\bar 0,\bar 0)$.\footnote{
This remarkable grading exists   in the split case too, where every nonzero homogeneous element is invertible. That is why, although $\OO_s$ is not a division algebra, it is a graded division algebra. In particular, this allows to construct both real octonion algebras as twisted group algebras for the group $\ZZ_2^3$.
} Call $g_i=\deg e_i$.
Any partition of a subspace $\cC$ in pieces indexed by a group $\cC=\oplus_{g\in G} \cC_g$ produces a group-grading on $L=\frgl(\cC)$ by means of 
$L=\oplus_{g\in G} L_g$ with $L_g=\{f\in\frgl(\cC):f(\cC_h)\subset \cC_{gh}\}$, i.e., $[L_g,L_h]\subset L_{gh}$. This is our case for $G=\ZZ_2^3$. The subalgebra $\mathfrak{spin}(V,-n)$ inherits this grading ($\mathfrak{spin}(V,-n)_g=\mathfrak{spin}(V,-n)\cap L_g$),
and the homogeneous components are
$$\begin{array}{ll}
W_i:=(\mathfrak{spin}(V,-n))_{g_i}&=\{x\in\mathfrak{spin}(V,-n): x(\cC_g)\subset \cC_{gg_i}\,\forall g\}\\
&=\langle \{ e_j^*\wedge(e_ie_j)^*:f\ne i \}\rangle\\
&\subset\{\alpha\in \wedge^2V^*:\alpha\cdot e_8\subset\langle e_i\rangle\}.
\end{array}
$$
For instance $W_4=\langle e^{17},e^{26},e^{35}  \rangle$, and in all the cases $W_i$ is spanned by the $2$-forms $e^{jk}$ such that $e_i$, $e_j$ and $e_k$ associate ($e_ie_j=\pm e_k$).
Besides, as the linear map $W_i\to\langle e_i\rangle$, $\alpha\mapsto \alpha\cdot e_8$ is not   zero, its kernel is a plane 
$$W_i'=\{\alpha\in W_i:\alpha\cdot e_8=0\}\le\frg_c.$$

The $\ZZ_2^3$-grading $\wedge^2V^*=\oplus_{i=1}^7 W_i$ splits our Lie algebra of type $B_3$ into seven Cartan subalgebras (the point is that, when we multiply two of them, we get another Cartan subalgebra!), as well as  happens with the  $\ZZ_2^3$-grading $\frg_c=\oplus_{i=1}^7 W_i'$.\footnote{When one works with the analogous $\ZZ_2^3$-grading on the split algebra $\frg_{2,2}$, any basis formed by homogeneous elements  satisfies that all its elements are semisimple. The bases with this property are important for Physics, but usually they are    not easy to find  in the split algebras, since all the elements in the root spaces are nilpotent. There are models of both the real forms of $\frg_2$ based on this $\ZZ_2^3$-grading. Group gradings on the complex algebra $\frg_2$ are studied in \cite{g2}, and on its real forms in \cite{reales}.} 
Now, the endomorphism $F$ commutes with the three order 2 automorphisms of $\wedge^2V^*$ producing the grading, so that $W_i$ are $F$-invariant subspaces. 
In fact, all of them are undistinguishable. The matrix of $F\vert_{W_4}$ relative to $B_{W_4}=\{ e^{17},e^{26},e^{35}  \}$
is
$$
  \left(\begin{array}{ccc}0&-1&1\\-1&0&1\\1&1&0\end{array}\right),
$$
so that the kernel $\ker(F\vert_{W_4}-\id)$ coincides with
$$
W_4'=\{a_{17}e^{17}+a_{26}e^{26}+a_{35}e^{35}:a_{17}+a_{26}-a_{35}=0\}.
$$
Similarly the equations defined by $\ker(F\vert_{W_i}-\id)$ for $i=1,\dots,7$  are equal to the seven equations in \eqref{eq_ecddeg2}. 

Now, the third eigenvalue of $F\vert_{W_4}$ has to be $-1-1=-2$, whose  eigenspace is the  {orthogonal} one to $W_4'$, i.e.
$\ker(F\vert_{W_4}+2\id)=\langle e^{17}+e^{26}-e^{35}\rangle=\langle e_4\intprodl\Omega_1 \rangle $.
A more conceptual argument for proving the characterization of the spin representation is to use that 
$$
\Omega_1\wedge(u\intprodl\Omega_1)\wedge(v\intprodl\Omega_1)=6n(u,v)e^{1234567},
$$
for any $u,v\in V$.
Thus, $\langle F(u\intprodl\Omega_1),v\intprodl\Omega_1\rangle=6n(u,v)=2\langle  u\intprodl\Omega_1 ,v\intprodl\Omega_1\rangle$, which proves \eqref{eq_caracV}.

In particular $\frg_c\cap \mathfrak{m}=0$. In order to prove \eqref{eq_comp_m}, we
check $[W_i',e_j\intprodl\Omega_1]\subset \langle (e_ie_j)\intprodl\Omega_1\rangle$. Again, for $\alpha=a_{17}e^{17}+a_{26}e^{26}+a_{35}e^{35}$ an arbitrary 2-form in $W_4$,
$$
[\alpha, e_1\intprodl\Omega_1]=(-a_{26}+a_{35}) e_7\intprodl\Omega_1 +(a_{17}+a_{26}-a_{35})e^{14},
$$
which belongs to $\mathfrak{m}$ if (and only if) $ \alpha\in\frg_c$. This computation also gives   the algebraic structure provided by the reductive decomposition\footnote{For a more algebraical approach, this reductive decomposition is $\wedge^2V^*=\frso(\OO_0,n),\ \frg_c=\Der(\OO),\ \mathfrak{m}=\ad(\OO_0)$.}, since
$$
[e_4\intprodl\Omega_1,e_1\intprodl\Omega_1]_{\mathfrak{m}}=-e_7\intprodl\Omega_1,\qquad
[e_4\intprodl\Omega_1,e_1\intprodl\Omega_1]_{\mathfrak{g}_c}=2e^{14}-e^{25}-e^{36},
$$
so that $[\ ,\ ]_\mathfrak{m}=\frac12\times$ under the natural correspondence between  $\mathfrak{m}$ and $V$.


%
%

\end{remark}

\bigskip
 
Finally,   some textbooks to achieve a good background for our study are:
\begin{itemize}
\item[{$\davidsstar$}] More about octonions in Okubo's book \cite{libroOkubo};
\item[{$\davidsstar$}] About exceptional algebras   \cite{Jacobsondeexcepcionales,Schafer};
\item[{$\davidsstar$}] About exceptional groups   \cite{Adams,Rosenfeld}.
\end{itemize}
 \medskip

\textbf{Acknowledgments}

To my colleague and always master Alberto Elduque, for helping me with some technical points in the proof of Proposition~\ref{pr_uncrossproduct} and for   his availability; to Ilka Agricola, for reading and improving several  versions of this manuscript and for encouraging me to write it and to make it accessible; and to Skip Garibaldi, for his nice suggestions and references.

\end{document}